\DeclareMathAlphabet{\mathpzc}{OT1}{pzc}{m}{it}
\newtheorem{thm}{Theorem}[section]
\newtheorem{rmk}[thm]{Remark}
\newtheorem{prop}[thm]{Proposition}
\newtheorem{lem}[thm]{Lemma}
\newtheorem{coro}[thm]{Corollary}
\numberwithin{equation}{section}
\def \l { \left( }
\def \r {\right) }
\def \ll { \left\lbrace }
\def \rr { \right\rbrace }
\DeclareMathOperator{\E}{\mathds{E}}
\DeclareMathOperator{\dP}{\mathds{P}}
\DeclareMathOperator{\N}{\mathbb{N}}
\DeclareMathOperator{\R}{\mathbb{R}}
\DeclareMathOperator{\cF}{\mathcal{F}}
\DeclareMathOperator{\cG}{\mathcal{G}}
\DeclareMathOperator{\cT}{\mathcal{T}}
\DeclareMathOperator{\cM}{\mathcal{M}}
\DeclareMathOperator{\cR}{\mathcal{R}}
\DeclareMathOperator{\cB}{\mathcal{B}}
\DeclareMathOperator*{\argmax}{argmax}
\newcommand{\pdsup}[3]{\frac{\partial^{#3} #1}{\partial #2^{#3}}}
\newcommand{\der}[2]{\frac{d #1}{d #2}}
\newcommand{\dersup}[3]{\frac{d^{#3} #1}{d #2^{#3}}}
\newcommand{\Norm}[2]{\left\Vert #1 \right\Vert_{#2}}
\newcommand{\btrev}[1]{{\textcolor{black}{#1}}}
\begin{document}

	\title[]{Non-local heat equations with moving boundary}
	\author[]{Giacome Ascione}
	\author[]{Pierre Patie}
	\author[]{Bruno Toaldo}
	\address[]{Scuola Superiore Meridionale - Universit\`{a} degli Studi di Napoli ``Federico II'', Largo S. Marcellino 10 - 80138, Napoli (Italy)}
	\email[]{g.ascione@ssmeridionale.it}
	\address[]{Cornell University, 219 Rhodes Hall - 14852, Ithaca, NY (USA)}
\email[]{ppatie@cornell.edu}
	\address[]{Dipartimento di Matematica ``Giuseppe Peano'' - Universit\`{a} degli Studi di Torino, Via Carlo Alberto 10 - 10123, Torino (Italy)}
	\email[]{bruno.toaldo@unito.it}
	\keywords{Semi-Markov process; anomalous diffusion; non-local heat equation; subordinator; inverse subordinator}
	\date{\today}
	\subjclass[2020]{35R37, 60K50}

		\begin{abstract}
In this paper we consider non-local (in time) heat equations on time-increasing parabolic sets whose boundary is determined by a suitable curve. We provide a notion of solution for these equations and we study well-posedness under Dirichlet conditions outside the domain. A maximum principle is proved and used to derive uniqueness and continuity with respect to the initial datum of the solutions of the Dirichlet problem. Existence is proved by showing a stochastic representation based on the delayed Brownian motion killed on the boundary. Several related distributional properties of the delayed Brownian motion and its crossing probabilities are also obtained. The asymptotic behaviour of the mean square displacement of the process is determined, showing that the diffusive behaviour is anomalous.
\end{abstract}

\maketitle

\tableofcontents

\section{Introduction}
Fractional kinetic equations (FKEs) are typical of several physical systems. Indeed they naturally arise, for instance, in the context of Hamiltonian chaos as a balance in a mesoscopic scale between macroscopic deterministic dynamics leading to chaos and microscopic purely random behaviour. In \cite{zaslavsky1} the author shows that, under some assumptions on the chaotic dynamics (called moderate non-linearity), the classical Fokker-Plank-Kolmogorov equations constitute a good candidate to be the kinetic equations for the system. However, if this assumption is violated, the system could exhibit an anomalous diffusive behaviour, which can be described, for instance, by the theory of L\'evy walks and flights (see \cite{zaslavsky2}). In \cite{zaslavsky3} the author finally established the link between FKEs and models of non-moderate Hamiltonian chaos (see also \cite{zaslavsky4, zaslavsky5} for further developments). It is interesting to observe that in this setting FKEs appear from a scaling limit procedure involving the L\'evy walk model, see for instance \cite{Metzler}. Among the FKEs, we are interested in particular into the following one
\begin{equation}\label{eq:tfhe}
	\pdsup{}{t}{\alpha}q(t,x)=\frac{1}{2}\pdsup{}{x}{2}q(t,x),
\end{equation}
that will be referred as (time-)fractional heat equation. Here, $\alpha \in (0,1)$ and $\dersup{}{t}{\alpha}$ is the well-known Caputo fractional derivative, defined as
\begin{equation*}
	\dersup{}{t}{\alpha}f(t)=\frac{1}{\Gamma(1-\alpha)}\der{}{t}\int_0^t (t-\tau)^{-\alpha}(f(\tau)-f(0))\, d\tau
\end{equation*}
for $\alpha \in (0,1)$ and a suitable function $f$. To use a shorter notation, we will denote $\pdsup{}{t}{\alpha}$ and $\dersup{}{t}{\alpha}$ as $\partial_t^\alpha$ and we will use an analogous notation for classical derivatives. In \cite{saichev} the authors provided the fundamental solution of a general FKE by means of a \textit{mixing formulation}, i.e.
\begin{equation}\label{eq:fracheatsol}
	q(t,x)=\frac{1}{t^\alpha}\int_0^{+\infty}f_\alpha\left(\frac{s}{t^\alpha}\right)p(s,x)\, ds,
\end{equation}
where $p(s,x)$ is the usual Gaussian heat kernel and $f_\alpha$ is the density of a suitable random variable $X$ with characteristic function $\varphi_X(z)=E_\alpha(iz)=\sum_{k=0}^{\infty}\frac{(iz)^k}{\Gamma(\alpha k+1)}$. It is clear that \eqref{eq:fracheatsol} can be rewritten in terms of a \textit{superposition formulation} as
\begin{equation*}
	q(t,x)=\E[p(t^\alpha X,x)].
\end{equation*}
By a simple conditional probability argument, one can show that $q(t,x)$ is indeed the density of the random variable \textcolor{black}{$B(t^\alpha X)\overset{d}{=}B(L_\alpha(t))$}, where $B$ is a Brownian motion and $X$ is independent of $B$, while $L_\alpha(t)$ is an $\alpha$-self-similar process independent of $B$ such that $L_\alpha(1)=X$. It turns out that the process $L_\alpha(t)$ can be described as the \textit{first time} in which an $\alpha$-stable subordinator (i.e., a positively skew $\alpha$-stable, hence increasing, process) crosses the threshold $t \ge 0$, also called inverse $\alpha$-stable subordinator, see \cite{meerstra2}. This relation has been deeper explored in \cite{fracCauchy}, where the authors extended the superposition formula to general Feller semigroups. However, it has been shown that the process $X_\alpha(t):=B(L_\alpha(t))$ is the weak limit of a Continuous-Time Random Walk (CTRW) in the sense of one-dimensional distributions (see \cite{Metzler} for details), and also in the $J_1$ Skorokhod topology (see \cite{mersch} and also \cite{meer annals probab} for generalizations). It is worth noticing that, due to the time-nonlocal nature of the equation, the FKE \eqref{eq:tfhe} cannot describe properly the transition law of a Markov process and furthermore does not characterize the whole process $X_\alpha$, since it is not Markovian. As a consequence, the identification of $X_\alpha$ as the limit of the CTRW model which leads to the FKE gives a deeper knowledge on the mechanisms behind non-moderate Hamiltonian chaos. For such a reason, the process $X_\alpha$ is usually called a Fractional Kinetic Process (FKP).

Once we have this identification, it is clear that one can study kinetic equations of chaotic systems through several techniques involving scaling limits. This is the case, for instance, of the Hamiltonian systems describing cellular flows. In \cite{hairer2} the authors provided a stochastic representation result for the limit of an averaging-homogenization problem, which is expressed in terms of the process $X_{1/2}$. The link between such a problem and the FKE \eqref{eq:tfhe} has been better underlined in \cite{hairer}. 

While being one of the most representative, Hamiltonian chaos is not the only physical occurrence of FKEs as weak scaling limits of discrete models. In \cite{monthus}, trapping models have been used to study Debye-type ageing/relaxation properties of some glass formers.  However, in \cite{rinn} the authors observed that in a randomized energy environment, the same trap models exhibit a form of subageing. This has been better captured by \cite{BAOP}, where the authors proved that these trap models with random environment converge, under some assumptions, to a FKP $X_\alpha$ (see also \cite{dintrap} for further details). Similarly, in \cite{flor} the FKP emerges as limit of an exclusion process.

The FKE \eqref{eq:tfhe} also appears in the description of heat transfer and mass infiltration in heterogeneous media. It is the case, for instance, of \cite{voller1}, in which \eqref{eq:tfhe} appears in a Green-Ampt infiltration model for moisture in the soil. This is also the case of heat transfer in materials with memory. Indeed, several nonlocal generalizations of the Fourier conduction law have been considered. In particular, a widely used generalization of such a conduction law is given by
\begin{equation}\label{eq:hff}
	\mathbf{q}(t,x)\propto -\int_0^t (t-\tau)^{\alpha-1}\nabla_x T(\tau,x)d\tau
\end{equation}
where $\mathbf{q}$ is the heat flux and $T$ is the temperature. In this case, \eqref{eq:tfhe} plays the role of the heat equation. This leads to a fractional theory of heat conduction, as described, for instance, in \cite{voller2,povstenko}. It makes then sense to consider Cauchy-Dirichlet problems associated with \eqref{eq:tfhe}, as they model the evolution of temperature in a non-Fickian material satisfying \eqref{eq:hff} subject to an external heat source: such kind of problems have been addressed, for instance, in \cite{meerbounded}. If, furthermore, the source is sufficiently hot or cold to cause a change of state, then also latent heat has to be taken in consideration, see for instance \cite{voller3}. These ideas lead to fractional Stefan problems, as discussed in \cite{garra, voller,vollergarra} and references therein, see also \cite{prsoca}. Let us consider here the \textit{sharp interface case}:
\begin{equation}\label{eq:Stefan}
	\begin{cases}
		\partial_t^\alpha u(t,x)=\frac{1}{2}\, \partial_x^2 u(t,x), & x<\varphi(t), \ t>0,\\
		u(t,x)=0, & x \ge \varphi(t), \ t \ge 0,\\
		u(0,x)=f(x) ,& x<b,\\
		\partial_t \varphi(t)=-\partial_xu(t,\varphi(t)) ,& t>0,\\
		\varphi(0)=b,
	\end{cases}
\end{equation}
where $f:(-\infty,b) \to \R$ and $b \in \R$ are given initial data, with $f(x)>0$ for all $x<b$, and $u$ and $\varphi$ are the unknowns. To solve such a problem, one could use a fixed point argument, by first solving the Cauchy-Dirichlet problem given by the first three equations for a fixed interface $\varphi$ and then finding a new interface $\varphi^\prime$ by solving the related Cauchy problem given by the fourth and fifth equalities: it is clear that a fixed point of such a procedure, together with the related solution of the Cauchy-Dirichlet fractional heat equation with moving boundary, is the desired solution of \eqref{eq:Stefan}. To do this, we first need to find the solution of a generic Cauchy-Dirichlet fractional heat equation with moving boundary:
\begin{equation}\label{eq:moving}
	\begin{cases}
		\partial_t^\alpha u(t,x)=\frac{1}{2}\, \partial_x^2 u(t,x), & x<\varphi(t), \ t>0,\\
		u(t,x)=0, & x \ge \varphi(t), \ t \ge 0,\\
		u(0,x)=f(x), & x<\varphi(0),\\
		\lim_{x \to -\infty}u(t,x)=0, & \mbox{ locally uniformly with respect to }t \ge 0,
	\end{cases}
\end{equation}
where the latter condition is required to guarantee uniqueness of the solution, as will be explained later, and $\varphi:[0,+\infty) \to \R$ is a fixed interface. 

To better understand this problem, we need to go back to the FKP $X_\alpha$, or even further back, directly to the Brownian motion.  Indeed, for $\alpha=1$, \eqref{eq:moving} is the usual Cauchy-Dirichlet problem with moving boundary for the heat equation. To shorten the notation, we denote $X_1 \equiv B$. We can kill the process $X_\alpha$ upon crossing the moving threshold $\varphi$ as follows: we define $T_\alpha$ as the first time in which $X_\alpha$ touches $\varphi$ and then we set $X^\dagger_\alpha(t)=X_\alpha(t)$ for $t<T_\alpha$ and $X^\dagger_\alpha(t)=\infty$ for $t \ge T_\alpha$, where $\infty \not \in \R$ is the \textit{cemetery state}. Since $T_\alpha$ is a stopping time with respect to the natural filtration of the process $X_\alpha$ and for $\alpha=1$ we have that $X_1$ is a Feller process, $X_1$ satisfies the Markov property in $T_1$ and thus the Dynkin-Hunt formula holds (see \cite[Theorem 2.4]{chung} for the fixed threshold case and \cite[Formula (1.1)]{lerche} for the general case). Once this has been established, in \cite[Chapter I]{lerche} it has been proved that, for $\alpha=1$, the sub-probability density of $X_1^\dagger$ is the fundamental solution of \eqref{eq:moving}. This eventually relates the heat equation \eqref{eq:moving} with the behaviour of the trajectories of $X_1$ (in particular, to its first crossing time with $\varphi$): this is indeed expected since $X_1$ is a Feller process.

On the other hand, $X_\alpha$ for $\alpha<1$ is not even Markov, and, at the same time, the time-fractional heat equation does not characterize the transition distribution of the process. Hence, at a first glance, the connection between \eqref{eq:moving} and $X_\alpha$ could be unclear. It is however worth noticing that despite $X_\alpha$ is not Markov, it still exhibits the Markov property on a suitable selection of stopping times. Indeed, $X_\alpha$ inherits the semi-Markov property from its CTRW prelimit (see \cite{cinlarsemi,kaspi} and references therein for a discussion on the topic). In general, $T_\alpha$ is not a Markov time for $X_\alpha$, unless we ask for some conditions on $\varphi$: in such a case we are able to prove a suitable Dynkin-Hunt formula and then to relate $T_\alpha$ with \eqref{eq:moving}, as stated in the following theorem, that we will prove throughout the paper.
\begin{thm}\label{thm:mainfrac}
	Assume $\varphi$ is continuous and either constant or strictly increasing with $\varphi'(0+)>0$ and $f \in C_c(-\infty,\varphi(0))$. Then there exists a function $q_\alpha:(0,+\infty) \times \R \times \R$ such that for all Borel subsets $A \in \cB(\R)$ it holds
	\begin{equation*}
		\mathbb{P}_y(X_\alpha^\dagger(t) \in A)=\int_{A} q_\alpha(s,x;y)\, dx.
	\end{equation*}
	In particular, the unique solution of \eqref{eq:moving} is given by
	\begin{equation*}
		u(t,x)=\int_{-\infty}^{\varphi(0)}f(y)q_\alpha(s,x;y)\, dy.
	\end{equation*}
\end{thm}
Let us observe here some crucial features of this result. First, notice that in any case \eqref{eq:tfhe} does not characterize the distribution of the trajectories of $X_\alpha$; nevertheless \eqref{eq:moving} still provides a quite interesting piece of information concerning the behaviour of the trajectories of $X_\alpha$, which is an unexpected result. This feature is shared with Markov processes even if the process $X_\alpha$, and its generalizations considered later, are not Markovian: the semi-Markov property of $X_\alpha$ is crucial to preserve the role of the analytical approach. The explicit use of semi-Markov property to obtain analytical results on non-local PDEs is, indeed, one of the main technical novelty concerning the proofs of our results.
 Furthermore, the assumption that $\varphi$ is continuous and strictly increasing is not really restrictive: heuristic arguments based on the second law of thermodynamics, which are then confirmed by a combination of a weak maximum principle and Hopf's lemma (the latter has been shown in \cite{roscanihopf}, see also \cite{roscani}), tell us that $\partial_t \varphi>0$ in \eqref{eq:Stefan}.

We also underline that the FKEs and the FKPs constitute a particular case of a much wider class of processes and equations, arising from limits of CTRWs. Indeed, in \cite{meertri} more general scaling limits of CTRWs have been considered, leading to processes of the form $X(t):=B(L(t))$, where $L(t)$ is the inverse of a (non necessarily stable) subordinator. Usually, the process $X$ is called \textit{delayed Brownian motion}, as in \cite{magda}. The relation between the process $X$ and a suitable time-nonlocal heat equation has been explored in \cite{zqc} (see also \cite[Appendix]{alp} for further regularity results and \cite{baemstra, costaluisa, kochu, kochukondra, KoloCTRW, kololast, meerpoisson, pierre, savtoa} for different approaches or generalizations), while some first properties of its first exit times have been explored in \cite{annals2020, pierremladen}. It is important to recall that such processes are also addressed in the context of semi-Markov processes, as observed in \cite{meerstra}.

In this paper, we will prove a general result (by substituting the power kernel $t^{-\alpha}$ with a more general one) that includes Theorem \ref{thm:mainfrac} as a particular case. This is done as follows. As already described for the FKP case, we use the semi-Markov property of $X$ to show that the first crossing time with $\varphi$ is a Markov time for $X$. Then, we use this to provide a Dynkin-Hunt-type formula for the killed process $X^\dagger$. Finally, we use the latter formula together with the fact that the density of $X$ solves a time-nonlocal heat equation, on the entire real line, to obtain the desired result. This latter step is actually quite technical and needs some precise estimates on the derivatives of the density of $X$. Concerning uniqueness of the solution, this is obtained by means of a weak maximum principle: our results in this context constitute a crucial generalizations of the ones (for nonlocal operators) in \cite{luchko} to more general parabolic domains. The reader can consult \cite{jozsef} for a maximum principle when there is also a non-local spatial component.

We plan, in future, to use our results to prove existence and uniqueness of the Stefan problem with a general memory kernel. Notice that while \eqref{eq:hff} is the most used non-local generalization of Fourier law, it is not the unique one. Indeed, general kernels were originally considered in \cite{gurtin}, see also \cite{nunziato}. Hence, it makes sense to consider Stefan problems with sharp interface with a more general convolution kernel. Moreover, a further investigation on the relation between Hamiltonian chaos theory and this special processes $X$ emerging from triangular array limits is needed, to capture some more particular anomalous diffusive behaviour (and not necessarily the power-type one), such as the ultraslow diffusive behaviour \cite{meerultra,toaldodo} or some more particular relaxation phenomena \cite{meertoa}.

{The structure of the paper is as follows.} In the forthcoming section \ref{sec:prelim} we introduce main facts on subordinators, their inverse processes and the related non-local equations. Furthermore, in Section \ref{sec:maxprin} a maximum principle for a non-local parabolic problem on a time-dependent domain will be stated: this will be used later to prove uniqueness of our heat equation. In Section \ref{sec:delbm} we introduce the delayed Brownian motion, i.e., a Brownian motion time-changed with an independent inverse subordinator. Firstly we review some main facts on it and then, in particular, we obtain several regularity properties for the density and we formalize the semi-Markov property that will be crucial for our main results. In Section \ref{sec:killedbm} we discuss the effect of killing a delayed Brownian motion, obtaining a (Dynkin-Hunt) representation for its density that will be crucial for regularity, needed in our main results. We divide the exposition between killing on a constant or time-dependent boundary. Section \ref{sec:mainres} is devoted to state and prove the main results. Finally, in Section \ref{secmsd} we obtain the anomalous diffusive behavior of the process, i.e., we study the asymptotycs of the mean square \textcolor{black}{displacement} for our killed process. In order to improve the readability, all throughout the paper several technical proofs are postponed to the Appendix \ref{appendix}.

\section{Preliminaries on subordinators and non-local operators}
\label{sec:prelim}
Throughout the paper we consider the family of filtered probability spaces \linebreak $(\Omega, \cF, \{\cF_t\}_{t \ge 0}, (\mathds{P}_x)_{x\in \R})$, and the canonical process $(B, \sigma)$ so that $B$ is a Brownian motion and $\sigma$ is an independent conservative subordinator such that $\mathds{P}_x(B(0)=x, \sigma(0)=0)=1$, i.e.~$\mathds{P}_x=\mathds{P}_{x,0}$ for simplicity. We recall, in particular, that a subordinator is a non-decreasing (hence positive) L\'evy process. We denote, for $\lambda \ge 0$,
\begin{equation}\label{eq:Bern}
	\Phi(\lambda)=-\log\left(\E\left[e^{-\lambda \sigma(1)}\right]\right)=b\lambda+\int_{\R^+}(1-e^{-\lambda \tau})\nu(d\tau),
\end{equation}
for a constant $b \ge 0$ and a measure $\nu$ on $\R^+:=(0,+\infty)$ such that
	$\int_{\R^+}(\tau \wedge 1)\nu(d\tau)<\infty$.
A function of the form \eqref{eq:Bern} is usually called a \textit{Bernstein function}, see \cite{niels1, librobern} for details. In the following, we assume the  standing assumption:
\begin{align} \tag{\textbf{A1}} \label{ass2}
	\text{$b=0$ and } \overline{\nu}(0)=\nu(0,\infty)=+\infty.
\end{align}
i.e.~$\overline{\nu}(\tau)=\nu(\tau,\infty)$ for all $\tau\geq 0$ is the tail of the L\'evy measure.
It is important to note that every Bernstein function corresponds to a unique subordinator. For this reason, we may refer to a subordinator by  its associated Bernstein function.
Under \eqref{ass2}, it is well-known that $\sigma$ is a.s. strictly increasing and of pure jump. This entails that the  process $(L(t))_{t\geq 0}$ defined, for any $t\geq0$, by
\begin{equation*}
	L(t):=\inf\{s>0: \ \sigma(s)>t\},
\end{equation*}
i.e., the generalized inverse of $\sigma$, has a.s.~continuous paths. Furthermore, for all $t>0$, we know, 	by \cite[Theorem 3.1]{meertri}, that $L(t)$ is an absolutely continuous random variable whose density $f_L(\cdot,t)$ is given by the formula
\begin{align}
	f_L(s, t) \, = \, \int_0^t \overline{\nu}(t-\tau) g_\sigma(d\tau, s),
	\label{rapprdensfL}
\end{align}
whered $g_\sigma(A, t)=\mathds{P}(\sigma(t) \in A)$ for all $A \in \cB(\R)$ and $t \ge 0$. We point out  that $\overline{\nu} \in L^1_{\rm loc}(\R^+_0)$, where $\R^+_0:=[0,+\infty)$ and $\lim_{\tau \to 0} \tau\overline{\nu}(\tau)=0$, see \cite[Eq. (3.7)]{librobern}. We can then define the mapping $I_\Phi: \R^+_0 \to \R^+_0$ as
\begin{equation}
	I_\Phi(t):=\int_0^t \overline{\nu}(\tau)d\tau
	\label{Iphi}
\end{equation}
which \btrev{will} play an important role in the sequel.

We shall need several estimates of the density of $\sigma(t)$, $L(t)$ and their derivatives which we rely on the so-called Orey's condition, see \cite{orey1968continuity}:
\begin{gather}
	 \text{ There exist  $\gamma \in (1,2)$, $C_\gamma>0$  and $ t_\gamma>0$  such that } 
	\int_{0}^t\tau^2\nu(d\tau)>4 C_\gamma t^\gamma, \text{ for any } t < t_\gamma. 
	\label{orcond} \tag{\textbf{A2}}
\end{gather}
\begin{rmk}\label{rmk:gamma12}
	In the original paper \cite{orey1968continuity}, the condition requires $\gamma \in (0,2)$. However, since $\nu$ is the L\'evy measure of a subordinator, it can be easily seen that it must be $\gamma \in (1,2)$.	
\end{rmk}
\textcolor{black}{Let us recall, by \cite[Proposition 3.7]{librobern}, that $\Phi$ admits a unique continuous extension $\Phi:\{z \in \mathbb{C}: \ \Re(z)\ge 0\} \to \{z \in \mathbb{C}: \ \Re(z)\ge 0\}$. With this in mind, we proceed with the following useful  bounds.
\begin{lem} \label{lemmaintzero}
		Under \eqref{ass2}, there exist $C_0>0$ such that for any $a \ge 0$ and $0 \le |b|<1$ it holds 
		\begin{align}
			\Re \l \Phi (a+ib) \r -\Phi(a)\geq C_0 e^{-a}|b|^2.
		\end{align}
		Furthermore, if also \eqref{orcond} holds, we have for any $a \ge 0$ and $|b|>M_\gamma:=t_\gamma^{-1}$
		\begin{align}
			\Re \l \Phi (a+ib) \r-\Phi(a) \geq C_\gamma e^{-t_\gamma a}|b|^{2-\gamma}.
		\end{align}
	\end{lem}
	\begin{proof}
		 For fixed $a \ge 0$, notice that
		 \begin{equation*}
		 	\Phi(a+ib)=\int_0^{+\infty}(1-e^{-(a+ib)\tau})\nu(d\tau)
		 \end{equation*}
		 hence
		 \begin{equation*}
		 	\Phi(a+ib)-\Phi(a)=\int_0^{+\infty}(e^{-a\tau}-e^{-(a+ib)\tau})\nu(d\tau)=\int_0^{+\infty}e^{-a\tau}(1-e^{-ib\tau})\nu(d\tau).
		 \end{equation*}
		 Taking the real part we get
		 \begin{equation*}
		 	\Re(\Phi(a+ib))-\Phi(a)=\int_0^{+\infty}e^{-a\tau}(1-\cos(b\tau))\nu(d\tau).
		 \end{equation*}
		{Recalling that for $|x|<1$ it holds $1-\cos x \geq |x|^2/4$, we get}
		\begin{equation*}
			\Re(\Phi(a+ib))-\Phi(a) \ge \int_0^{\frac{1}{|b|}}\frac{|b|^2\tau^2}{4}e^{-a\tau}\nu(d\tau) \ge \frac{|b|^2 e^{-a}}{4}\int_0^1\tau^2 \nu(d\tau)=:C_0e^{-a}|b|^2, \ |b| \in (0,1),
		\end{equation*}
		while the inequality is trivial for $b=0$.
		If \eqref{orcond} holds, then
		\begin{align}\label{427}
			\Re \l \Phi (a+ib) \r-\Phi(a) \geq \frac{|b|^2}{4}\int_0^{\frac{1}{|b|}} \tau^2 e^{-a\tau} \nu(d\tau)\geq C_\gamma e^{-\frac{a}{M_\gamma}}|b|^{2-\gamma}, \qquad |b| > M_\gamma.
		\end{align}
	\end{proof}
	From now on, we will set $\Psi(\xi):=\Phi(-i\xi)$ for $\xi \in \R$.
}
As a consequence, as shown in \cite{orey1968continuity}, \eqref{orcond} implies
\begin{equation}
	|\E[e^{i\xi\sigma(t)}]|\le e^{- C_\gamma t |\xi|^{2-\gamma}}, \quad  |\xi|>M_\gamma,
	\label{oreychar}
\end{equation} 
and then for $t>0$ the r.v. $\sigma(t)$ admits an infinitely differentiable density on $(0,\infty)$, that we denote $g_\sigma(\cdot,t)$. With this in mind, we can provide the following estimates.
\begin{prop}
	\label{derfL}
	Assume \eqref{ass2} and \eqref{orcond}. Then $f_L$ admits first-order partial derivatives and, for $s,t>0$,
	\begin{align}
		\partial_t f_L(s, t) &=  \int_0^t \overline{\nu}(\tau) \partial_t g_\sigma(t-\tau, s) d\tau. \label{dert}\\
		\partial_s f_L(s, t) &=  \int_0^t \overline{\nu}(\tau) \partial_s g_\sigma(t-\tau, s) d\tau \label{ders}
	\end{align}
	In particular,
	\begin{align}\label{upbound}
		\left|\partial_t f_L(s,t)\right|&\le \frac{I_\Phi(t)}{\pi}\int_0^{+\infty}\xi e^{-s\Re(\Psi(\xi))}d\xi \\
		\left|\partial_s f_L(s,t) \right|&\le \frac{I_\Phi(t)}{\pi}\left(\sqrt{2}\int_{0}^{M_\gamma}|\Psi(\xi)|e^{-s\Re(\Psi(\xi))}\, d\xi\right. \nonumber\\
		&\left.+\int_{M_\gamma}^{+\infty}\left(3\overline{\nu}(1)+\xi \int_0^1 \tau \nu(d\tau)+\frac{\xi^2}{2}\int_0^1 \tau^2\nu(d\tau)\right)e^{-s\btrev{C_\gamma}\xi^{2-\gamma}}\, d\xi\right), \label{upbound2}
	\end{align}
	where $I_\phi$ is defined in \eqref{Iphi}. Furthermore we have that, locally uniformly for $t \in (0,+\infty)$,
	\begin{equation*}
		\lim_{s \to \infty}sf_L(s,t)=\lim_{s \to \infty}s^2|\partial_s f_L(s,t)|=0.
	\end{equation*}
	and also $\partial_t f_L(s,\cdot), \partial_s f_L(s,\cdot) \in C(\R^+)$,
	\begin{equation}\label{eq:unifconvparfL}
		\lim_{t \downarrow 0}\partial_t f_L(s,t)=\lim_{t \downarrow 0}\partial_s f_L(s,t)=0 \mbox{ locally uniformly with respect to }s>0.
	\end{equation}
\end{prop}
The proof is given in Appendix \ref{App1True}.

We can now define a family of linear operators associated to a subordinator, or equivalently to a Bernstein \btrev{function}. For a fixed Bernstein function $\Phi$ statisfying \eqref{ass2}, we define the convolution integral operator acting on $L^1_{\rm loc}(\R^+_0)$
\begin{equation*}
	\mathcal{I}^\Phi f(t):=\int_0^t \overline{\nu}(t-\tau)f(\tau)\, d\tau
\end{equation*}
and the \textit{generalized fractional derivative}
\begin{equation}\label{eq:gfd}
	\partial_t^\Phi f(t):=\der{}{t}\int_0^t \overline{\nu}(t-\tau)(f(\tau)-f(0))\, d\tau=\der{}{t}\mathcal{I}^\Phi (f(\cdot)-f(0))(t),
\end{equation}
provided the quantity is well-defined. It is, in any case, important to observe that if $f \in {\rm AC}[0,T]$ for some fixed $T>0$, then $\partial_t^\Phi f \in L^1[0,T]$.
\begin{lem}\label{lem:derdentro}
	Fix $T>0$. For $f\in {\rm AC}[0,T]$ it holds $\partial_t^\Phi f \in L^1[0,T]$ with
	\begin{equation}
		\partial_t^\Phi f (t)=\int_0^t \overline{\nu}(t-s)\partial_t f(s)ds, \ \mbox{ for a.e. }t \in [0,T].
		\label{derdentro}
	\end{equation}
	If furthermore $f$ is Lipschitz, then $\partial_t^\Phi f \in C[0,T]$ and \eqref{derdentro} holds for all $t \in [0,T]$.
\end{lem}
\begin{proof}
	Since $f \in {\rm AC}[0,T]$, then $\partial_t f \in L^1[0,T]$ and $\mathcal{I}^\Phi (\partial_t f) \in L^1[0,T]$. First, notice that
	
	\begin{align}\label{eq:FubiniinG}
		\begin{split}
			\int_0^t \int_0^s \overline{\nu}(\tau)|\partial_t f(s-\tau)|d\tau ds&=\int_0^t \overline{\nu}(\tau)\int_\tau^t |\partial_t f(s-\tau)|ds d\tau\\
			&=\int_0^t \overline{\nu}(\tau)\int_0^{t-\tau} |\partial_t f(z)|dz d\tau\\
			&
			\le \left(\int_0^t \overline{\nu}(\tau)d\tau\right)\left(\int_0^{t} |\partial_t f(z)|dz\right)<+\infty.
		\end{split}
	\end{align}
	Hence, by Fubini's theorem and \cite[Theorem $7.29$]{wheeden}
	\begin{align*}
		\int_0^t \mathcal{I}^\Phi(\partial_t f)(s)ds&=\int_0^t\int_0^s \overline{\nu}(\tau)\partial_t f(s-\tau)d\tau ds\\
		&=\int_0^t\overline{\nu}(\tau)\int_\tau^t \partial_t f(s-\tau)ds d\tau=\int_0^t\overline{\nu}(\tau)(f(t-\tau)-f(0))d\tau.
	\end{align*}
	Hence we can differentiate both sides to conclude the first part of the proof. The second part of the statement is a direct consequence of \cite[Proposition $1.3.2$]{abhn}. 
\end{proof}
\begin{rmk}
	We point out that there exist functions $f \not \in {\rm AC}[0,T]$ such that $\partial_t^\Phi f$ is well-defined. For instance, if $\Phi(\lambda)=\lambda^\alpha$, i.e.~$\overline{\nu}(\tau)=\frac{\tau^{-\alpha}}{\Gamma(1-\alpha)}$ for $\tau>0$, \btrev{then $\partial_t^{\Phi} f$ is well-defined for $f \in C^\beta[0,T]$} for \textcolor{black}{$\beta \ge \alpha$}, see \cite[Theorem 1.18]{amp2}.  
\end{rmk}
The relation between the operator $\partial_t^\Phi$ and the density $f_L$ is underlined by the next result.
\begin{prop}\label{prop:fL}
	Under Assumptions \eqref{ass2} and \eqref{orcond} it holds
	\begin{equation}\label{eqdensfrac}
		\partial_t^\Phi f_L(s;t)=-\partial_s f_L(s;t) \qquad s,t>0,
	\end{equation}
	where we set $f_L(s;0)=f_L(0;t)=0$ for all $s,t>0$.
\end{prop}
\begin{proof}
First, note that if one proves the equality
\begin{equation}\label{eq:preLap}
	\left(\mathcal{I}^\Phi f_L(s;\cdot)\right)(t)=-\int_0^t\partial_s f_L(s;w)\, dw, \ t,s>0
\end{equation}	
then \eqref{eqdensfrac} follows by differentiating both sides of the previous relation. To show \eqref{eq:preLap}, let us first observe, as a direct consequence of \eqref{rapprdensfL} that, for $\lambda>0$,
\begin{equation*}
	\int_0^{+\infty}e^{-\lambda t}f_L(s;t)\, dt=\frac{\Phi(\lambda)}{\lambda}e^{-s\Phi(\lambda)}
\end{equation*}
hence
\begin{equation*}
	\int_0^{+\infty}e^{-\lambda t}\left(\mathcal{I}^\Phi f_L(s;\cdot)\right)(t)\, dt=\frac{\Phi^2(\lambda)}{\lambda^2}e^{-s\Phi(\lambda)}.
\end{equation*}
On the other hand, from \eqref{upbound2}, we know that, for $\lambda>0$,
\begin{equation*}
	\int_0^{+\infty}e^{-\lambda t}|\partial_s f_L(s;t)|\, dt<\infty
\end{equation*}
hence
\begin{equation*}
	\int_0^{+\infty}e^{-\lambda t}\int_0^t\partial_s f_L(s;w)\, dw \, dt=\frac{1}{\lambda}\int_{0}^{+\infty}e^{-\lambda \btrev{w}}\partial_s f_L(s;w)\, dw.
\end{equation*}
Next, we observe that \eqref{upbound2} also holds on incremental ratios, see the proof in Appendix \ref{App1True}, so that, by a simple dominated convergence argument, we have
\begin{equation*}
	\int_0^{+\infty}e^{-\lambda t}\int_0^t\partial_s f_L(s;w)\, dw \, dt=\frac{1}{\lambda}\partial_s\left(\int_{0}^{+\infty}e^{-\lambda \btrev{w}} f_L(\cdot;w)\, dw\right)(s)=-\frac{\Phi^2(\lambda)}{\lambda^2}e^{-s\Phi(\lambda)}.
\end{equation*}
Hence, for fixed $s>0$, we know that \eqref{eq:preLap} holds for $t \in \R^+ \setminus \mathcal{N}$, where $|\mathcal{N}|=0$, by \cite[Theorem 1.7.3]{abhn}. However, since $g_\sigma(\cdot;s)$ is continuous on $\R_0^+$ for $s>0$, setting $g_\sigma(0;s)=0$, and $\overline{\nu} \in L^1_{\rm loc}(\R_0^+)$, then $f_L(s;\cdot)$ is continuous on $\R_0^+$ by \eqref{rapprdensfL} and, as consequence, $\mathcal{I}^\Phi f_L(s;\cdot)$ is continuous on $\R_0^+$. Hence \eqref{eq:preLap} holds for all $t,s>0$. 
\end{proof}
We will also make use of the following function
\begin{equation}
	U_{p}(t)=\E_x[L(t)^{p}], \ t>0,
	\label{Up}
\end{equation}
that is finite for all $t>0$ and $p>-1$ by \cite[Lemma 2.3]{amp} and \cite[Lemma 4.1]{ascione}. We need, however, the following integrability property.
\begin{prop}\label{prop:U12L1}
	Under Assumption \eqref{ass2} we have $U_{p} \in L^1_{\rm loc}(\R_0^+)$ for all $p>-1$.
\end{prop}
\begin{proof}
	Consider for $\lambda>0$
	\begin{equation*}
		\int_0^{+\infty}e^{-\lambda t}U_{p}(t)\, dt,
	\end{equation*}
	that is well-defined since $U_{p}(t)>0$ for all $t>0$. By Tonelli's theorem, we have
	\begin{align}\label{eq:Laptrans}
		\int_0^{+\infty}e^{-\lambda t}U_{p}(t)\, dt=&\int_0^{+\infty}s^{p}\int_0^{+\infty}e^{-\lambda t}f_L(s;t)\, dt \, ds\\
		=&\frac{\Phi(\lambda)}{\lambda}\int_0^{+\infty}s^{p}e^{-s\Phi(\lambda)}ds=\frac{\Gamma(p+1)}{\lambda(\Phi(\lambda))^{p}}<\infty.
	\end{align}
	In particular, for any $T>0$ it holds
	\begin{equation*}
		\int_0^T U_{p}(t)\, dt \le e^{T}\int_0^T e^{-t} U_{p}(t)\, dt \le \Gamma(p+1)(\Phi(1))^{-p}e^{T}<\infty.
	\end{equation*}
\end{proof}
\textcolor{black}{Concerning the density of subordinators, we will need a slightly improved version of \cite[Equation (5.32)]{tlms2024} under an additional condition on the Bernstein function.
\begin{prop}\label{prop:keyhole}
	Suppose Assumption \eqref{ass2} holds. and assume further that:
	\begin{itemize}
		\item There exists $\theta \in \left(\frac{\pi}{2},\pi\right)$ such that $\Phi$ admits a holomorphic extension in the complex sector
		\begin{equation*}
			\mathbb{C}(\theta):=\{z \in \mathbb{C}: \ |{\rm Arg}(z)| < \theta\},
		\end{equation*}
		where ${\rm Arg}(z) \in (-\pi,\pi]$ is the principal argument of $z \in \mathbb{C}$;
		\item $\Phi$ is continuous in $\overline{\mathbb{C}(\theta)}$;
		\item For all $z \in \overline{\mathbb{C}(\theta)} \setminus \{0\}$ it holds $\Re(\Phi(z))>0$;
		\item The limit
		\begin{equation*}
			\lim_{z \to \infty}\frac{\Phi(z)}{z}=0
		\end{equation*}
		holds uniformly in $\overline{\mathbb{C}(\theta)}$.
		\item For any fixed $a>0$ it holds
		\begin{equation*}
			\lim_{b \to +\infty}\frac{\Re(\Phi(a+ib))}{\log(b)}=+\infty.
		\end{equation*}
	\end{itemize}
	Then
	\begin{equation}\label{eq:gstbound}
		g(s,t)=\frac{1}{\pi}\int_0^{+\infty}\Im\left(e^{i\theta+s\xi e^{i\theta}}\left(e^{-t\Phi(\xi e^{i\theta})}-1\right)\right)\, d\xi.
	\end{equation} 
	If furthermore there exists $\alpha \in (0,1]$ such that 
	\begin{equation*}
		\lim_{r \to +\infty}\frac{|\Phi(re^{i \theta})|}{r^\alpha}=\ell 
	\end{equation*}
	for some $\ell \ge 0$, then there exists a costant $C>0$ such that 
	\begin{equation}\label{eq:controlgst}
		g(s,t) \le Ct\left(1+\frac{1}{s^{\alpha+1}}\right).
	\end{equation}
\end{prop}
The proof is quite technical and is left in Appendix \ref{app:keyhole}
}
\subsection{A weak maximum principle}
\label{sec:maxprin}
In the following we will be interested in a time-nonlocal heat equation, on a specific parabolic domain, involving the operator $\partial_t^\Phi$. To prove uniqueness of the solution, we will make use of a weak maximum principle, which we now state for a more general advection-diffusion operator. Before doing this, we need to introduce some notation concerning parabolic domains. For $T \in [0,+\infty]$ and $N \in \N$, let $E \subset [0,T] \times \R^N$ (if $T=+\infty$ set $[0,T]:=\R_0^+$). We denote by $\overline{E}$ the usual topological closure of $E$ while $\mathring{E}$ will be the usual topological interior of $E$. Given $(t_0,x_0) \in [0,T] \times \R^N$ and $r>0$, $B_r(t_0,x_0)$ denotes the Euclidean ball of radius $r>0$ and center $(t_0,x_0)$ in $\R^{N+1}$. The parabolic interior $E^\ast$ of $E$ is defined as follows 
\begin{equation*}
	(t_0,x_0) \in E^\ast \Leftrightarrow \exists r>0: \ B_r(t_0,x_0)\cap \{(t,x) \in [0,T]\times \R^N: \ t \le t_0\} \subset E.
\end{equation*}
It is clear that $\mathring{E} \subset E^\ast$. However, the two sets do not necessarily coincide. For instance, if $G \subset \R^N$ is an open set and $E=[0,T] \times G$, then $\mathring{E}=(0,T) \times G$ and $E^\ast=(0,T] \times G$. We say that $E$ is a parabolic open set if $E^\ast=E$. The parabolic boundary of $E$ is given by $\partial_p E:=\overline{E} \setminus E^\ast$. It is clear that $\partial_p E \subset \partial E$, where $\partial E:=\overline{E}\setminus \mathring{E}$ is the usual boundary of the set $E$. We also consider the slices
\begin{align*}
	E_1(x)&=\{t \in [0,T]: \ (t,x) \in E\} \qquad x \in \R^N\\
	E_2(t)&=\{x \in \R^N: \ (t,x) \in E\} \qquad t \in [0,T]
\end{align*}
and the projections $E_1=\bigcup_{x \in \R^N}E_1(x)$ and $E_2=\bigcup_{t \in [0,T]}E_2(t)$. We say that $E$ is non-decreasing with respect to the variable $t$ if $E_2(t_1)\subseteq E_2(t_2)$ whenever $0 \le t_1 \le t_2 \le T$. If $T<\infty$ and $E \subset [0,T] \times \R^N$, given two functions $f:(t,x) \in E \to \R$ and $u:E_1 \to \R$, we say that $\lim_{x \to \infty}f(t,x)=u(t)$ uniformly with respect to $t \in [0,T]$ if \textcolor{black}{defining $\widetilde{E}=E_1 \times \overline{E_2}$ and
\begin{equation*}
	\widetilde{f}(t,x)=\begin{cases} f(t,x) & (t,x) \in E \\
		u(t) & (t,x) \in \widetilde{E} \setminus E
	\end{cases}
\end{equation*}
we have the following two properties:
\begin{itemize}
	\item[(i)] $\widetilde{f}$ is a continuous function on $\widetilde{E}$;
	\item[(ii)] For all $\varepsilon>0$ there exists a compact set $K \subsetneq E_2$ such that
	\begin{equation*}
		\sup_{t \in E_1}\sup_{x \in E_2}|\widetilde{f}(t,x)-u(t)| <\varepsilon.
	\end{equation*}
\end{itemize}}
If instead $E \subset \R_0^+ \times \R^N$, we say that $\lim_{x \to \infty}f(t,x)=u(t)$ locally uniformly with respect to $t \ge 0$ if $\lim_{x \to \infty}f(t,x)=u(t)$ uniformly with respect to $t \in [0,T]$ for all $T>0$. \textcolor{black}{For $N=1$, if $E_2$ is not lower (resp. upper) bounded, then we say that $\lim_{x \to -\infty}f(t,x)=u(t)$ (resp. $\lim_{x \to +\infty}f(t,x)=u(t)$) uniformly with respect to $t \in [0,T]$ if extending $f$ to $\widetilde{f}$ as before we have
	\begin{equation*}
		\lim_{x \to -\infty}\sup_{t \in E_1}|\widetilde{f}(t,x)-u(t)|=0 \qquad \left(\text{ resp.} \lim_{x \to +\infty}\sup_{t \in E_1}|\widetilde{f}(t,x)-u(t)|=0\right).
	\end{equation*}
}

We can now state the following result, which is a weak maximum principle (the operators $\nabla$ and $\Delta$ are referred to the $x$ variable in $\R^N$).
\begin{thm}\label{thm:weakmax}
	\textcolor{black}{Assume \eqref{ass2} holds}. Let $T \in [0,+\infty)$ and $E \subset [0,T] \times \R^N$ be connected and non-decreasing with respect to the variable $t$ and define, for $x \in \overline{E}_2$, $t(x)=\min\{t \ge 0: \ x \in \overline{E}\}$. Let $u:\R_+^0 \times \R^N \to \R$ be such that $u \in C(\overline{E})$, $u(t,\cdot) \in C^2(E_2(t))$ for all $t \in E_1$ \textcolor{black}{with $t>0$}, $u(t,x)=u(t(x),x)$ for all $x \in E_2$ and $t \in [0,t(x)]$ and $\partial_t^\Phi u(t,x)$ is well-defined for $(t,x) \in E$ with $\partial_t^\Phi u(\cdot,x) \in C(E_1(x))\cap L^1_{\rm loc}(\overline{E_1(x)})$ for all $x \in E_2$. Furthermore, if $E$ is not bounded, assume that there exists a function $u_\infty \in C[0,T]$ such that $\lim_{x \to \infty}u(t,x)=u_\infty(t)$ uniformly with respect to $t \in [0,T]$. 
	Finally, suppose that
	\begin{equation*}
		\partial_t^\Phi u(t,x)-\langle p_1(x), \nabla u(t,x)\rangle-p_2(x)\Delta u(t,x) \le 0 \ \forall (t,x) \in E,
	\end{equation*}
	where $p_1:E_2 \to \R^N$ and $p_2:E_2 \to \R_0^+$. Then, if $E$ is bounded,
	\begin{equation*}
		\max_{(t,x) \in \overline{E}}u(t,x)=\max_{(t,x) \in \partial_p E}u(t,x)
	\end{equation*}
	while if $E$ is unbounded
	\begin{equation*}
		\sup_{(t,x) \in \overline{E}}u(t,x)=\max\left\{\max_{(t,x) \in \partial_p E}u(t,x),\max_{t \in [0,T]}u_\infty(t)\right\}.
	\end{equation*}
\end{thm}
We leave the proof of this technical result, which is of independent interest, in Appendix \ref{App2new}.
\section{The Brownian motion delayed by an inverse subordinator}
\label{sec:delbm}
\subsection{Definition and regularity of the density}
Let $B$ be the Brownian motion introduced in the previous section. Then we define the Brownian motion \textit{delayed} by an inverse subordinator (see \cite{magda}) as
\begin{equation*}
	X_\Phi(t):=B(L(t)), \ t \ge 0.
\end{equation*}
Notice that, under \eqref{ass2} the process $X_\Phi$ is $\mathds{P}_y$-a.s. continuous for any $y \in \R$ since it is the composition of two continuous functions. By independence of $B$ and $L$, it is immediate to notice that for any $A \in \cB(\R)$ and $y \in \R$ it holds
\begin{equation*}
	\mathds{P}_y(X_\Phi(t) \in A)=\int_{A} p_\Phi(t,x;y)\, dx
\end{equation*}
where
\begin{equation}\label{eq:pPhidef}
p_\Phi(t,x;y)=\int_0^{+\infty}p(s,x;y)f_L(s;t)\, ds
\end{equation}
and
\begin{equation*}
	p(s,x;y)=\frac{1}{\sqrt{2\pi s}}e^{-\frac{(x-y)^2}{2s}}, \ (s,x,y) \in (0,\infty) \times \R^2
\end{equation*}
is the density of the Brownian motion $B$ under $\mathds{P}_y$. It is clear that $p_\Phi(t,x;y)$ depends actually on $x-y$. Now we give some results concerning the regularity of the function $p_\Phi$ with respect to both \btrev{$x$ and $t$ variables}.

\begin{prop}\label{prop:regx}
	Under \eqref{ass2}, $p_\Phi$, $\partial_x p_\Phi$ and $\partial^2_x p_\Phi$ exist in $C(\R^+ \times (\R^2 \setminus {\sf diag}(\R^2)))$, where ${\sf diag}(\R^2)=\{(x,x), \ x \in \R\}$ and
	\begin{align}
		\begin{split}
		\partial_x p_\Phi(t,x;y)&=\int_0^{+\infty}\partial_x p(s,x;y)f_L(s;t)\, ds\\
		&=-\btrev{\frac{1}{\sqrt{2\pi}}}\int_0^{+\infty}\frac{(x-y)e^{-\frac{(x-y)^2}{2s}}}{s^\frac{3}{2}}f_L(s;t)\, ds,	
		\end{split}
		 \label{derx}\\
		\begin{split}
		\partial^2_xp_\Phi(t,x;y)&=\int_0^{+\infty}\partial^2_xp(s,x;y)f_L(s;t)\, ds\\
		&=\btrev{\frac{1}{\sqrt{2\pi}}}\int_0^{+\infty}\frac{1}{ s^{\frac{3}{2}}}\left(\frac{(x-y)^2}{s}-1\right)e^{-\frac{(x-y)^2}{2s}}f_L(s;t)\, ds.	
		\end{split}
		 \label{derx2}
	\end{align}
	In particular, $p_\Phi \in C(\R^+ \times \R^2)$,
	\begin{equation*}
	\lim_{t \to 0^+}p_\Phi(t,x;y)=0 \mbox{ locally uniformly in } \{(x,y) \in \R^2: \ x \not = y\}	
	\end{equation*}
	and, in weak convergence,
	\begin{equation*}
	\lim_{t \to 0^+}p_\Phi(t,x;y)dy=\delta_x(dy)	
	\end{equation*}
	If also \eqref{orcond} holds, then, for $x \not = y$, $p_\Phi (\cdot, x;y)$ is differentiable on $(0, \infty)$, $\partial_t p_\Phi \in C(\R^+ \times (\mathbb{R}^2 \setminus {\sf diag}(\R^2)) )$,
	\begin{align}
		\partial_t p_{\Phi}(t,x;y) \, = \, \int_0^{+\infty}p(s, x;y) \,  \partial_t f_L(s,t) ds
		\label{diffpPhi}
	\end{align}
	and
	\begin{equation}\label{eq:locunifpartpphi}
	\lim_{t \to 0^+}\partial_t p_\Phi(t,x;y) \to 0 \ \mbox{locally uniformly with respect to } (x,y) \in \mathbb{R}^2 \setminus {\sf diag}(\R^2).
	\end{equation}
	Furthermore, for $x \not = y$, $\partial_t^\Phi p_\Phi(\cdot,x;y)$ is well-defined on $(0,\infty)$, $\partial_t^\Phi p_\Phi \in C(\R^+ \times (\R^2 \setminus {\sf diag}(\R^2)))$,
	\begin{align}
		\partial_t^\Phi p_{\Phi}(t,x;y) =  \int_0^{+\infty}p(s, x;y) \,  \partial_t^\Phi f_L(s,t) ds=-\int_0^{+\infty}p(s, x;y) \,  \partial_s f_L(s,t) ds.
		\label{diffPhipPhi}
	\end{align}
	and
	\begin{equation}\label{eq:locunifpartpphi2}
		\lim_{t \to 0^+}\partial^\Phi_t p_\Phi(t,x;y) \to 0 \ \mbox{locally uniformly with respect to } (x,y) \in \mathbb{R}^2 \setminus {\sf diag}(\R^2).
	\end{equation}
\end{prop}
The technical proof is given in Appendix \ref{Appreg}.

In next theorem, we prove that $p_\Phi$ satisfies pointwise a time-nonlocal heat equation for $t>0$ and $x \not = y$. To do this, however, we preliminarily set $p_\Phi(0,x;y)=0$ whenever $x \not = y$. In such a way, $p_\Phi \in C(\R_0^+ \times (\R^2 \setminus {\sf diag}(\R^2)))$.
\begin{thm}
	\label{lemmaeqfraz}
	Under Assumptions \eqref{ass2} and \eqref{orcond}, 
	\begin{equation}
		\partial_t^\Phi p_\Phi(t,x;y)=\frac{1}{2}\partial_x^2 p_\Phi(t,x;y), \ t>0,\ (x,y) \in \R^2 \setminus {\sf diag}(\R^2).
		\label{eqcalfraz}
	\end{equation}
\end{thm}
\begin{proof}
	Arguing as in Proposition \ref{prop:fL}, it is sufficient to show that for $(x,y) \in \R^2 \setminus {\sf diag}(\R^2)$ and $t>0$,
	\begin{align}
		\mathcal{I}^\Phi p_\Phi(t,x;y)=\frac{1}{2} \int_0^t \partial_x^2 p_\Phi(s,x;y) \, ds 
		\label{intfrac}
	\end{align}
	To do this, first observe that for $\lambda>0$
	
	\begin{align}
		\int_0^{+\infty} e^{-\lambda t}  p_\Phi(t, x;y)\, dt &= \int_0^{+\infty} e^{-\lambda t} \int_0^{+\infty} p(s, x;y) f_L(s, t) \, ds \, dt \notag \\
		 &= \,  \frac{\Phi(\lambda)}{\lambda} \int_0^{+\infty}e^{-s\Phi(\lambda)}p(s, x;y)\, ds
		\label{539}
	\end{align}
	and then
	\begin{align}
		\int_0^{+\infty} e^{-\lambda t}  \mathcal{I}^\Phi p_\Phi(t, x;y)\, dt &=\frac{\Phi^2(\lambda)}{\lambda^2} \int_0^{+\infty}e^{-s\Phi(\lambda)}p(s, x;y)\, ds.
		\label{5392}
	\end{align}
	Let us also recall that, clearly,
	\begin{align}
		&\int_0^{+\infty} e^{-\lambda t} \left(\left| \partial_x p(t, x;y) \right|+\left| \partial_x^2 p(t, x;y) \right|\right) dt < \infty
		\label{l1}
	\end{align}
	and then, by \eqref{derx2},
	\begin{equation}\label{eq:preLap2}
		\int_0^{+\infty}e^{-\lambda t}\left(\int_0^{t}\partial_x^2 p_\Phi(s,x;y)\, \, ds\right) \, dt=\frac{\Phi(\lambda)}{\lambda^2}\int_0^{+\infty}e^{-s\Phi(\lambda)}\partial_x^2p(s,x;y)\, ds.
	\end{equation}
	However, we recall that $p(t,x;y)$ satisfies
	\begin{equation*}
		\partial_t p(t,x;y)=\frac{1}{2}\partial_x^2 p(t,x;y) \qquad t>0, \ (x,y) \in \R^2
	\end{equation*}
	and \eqref{l1} guarantees that $\partial_t p(\cdot,x;y)$ admits Laplace transform with non-positive abscissa of convergence. Hence, by \eqref{eq:preLap2} and the fact that $\Phi(\lambda)>0$, we get
	\begin{align}
		\int_0^{+\infty}e^{-\lambda t}\left(\int_0^{t}\partial_x^2 p_\Phi(s,x;y)\, \, ds\right) \, dt&=\frac{2\Phi(\lambda)}{\lambda^2}\int_0^{+\infty}e^{-s\Phi(\lambda)}\partial_sp(s,x;y)\, ds \nonumber \\
		&=\frac{2\Phi^2(\lambda)}{\lambda^2}\int_0^{+\infty}e^{-s\Phi(\lambda)}p(s,x;y)\, ds. \label{eq:preLap3}
	\end{align}
	Comparing \eqref{5392} and \eqref{eq:preLap3}, by the injectivity of the Laplace transform we get that for all $(x,y) \in \R^2 \setminus {\sf diag}(\R^2)$ \eqref{intfrac} holds for $t \in \R^+ \setminus \mathcal{N}$, where $|\mathcal{N}|=0$. However, by continuity of both sides of \eqref{intfrac}, that easily follows by \eqref{eq:pPhidef} and the fact that $\overline{\nu} \in L^1_{\rm loc}$, we have that $\mathcal{N}=\emptyset$ and \eqref{intfrac} follows.
\end{proof}
\begin{rmk}\label{rmkcont}
	The previous theorem also guarantees that $\partial_t^\Phi p_\Phi \in C(\R^+ \times (\R^2 \setminus {\sf diag}(\R^2)))$.
\end{rmk}

Notice that $\partial_x p_\Phi$, $\partial_x^2 p_\Phi$ and $\partial_t^\Phi p_\Phi$ can be infinite for $t>0$ as $x-y \to 0$. \textcolor{black}{To guarantee a uniform limit in this regime, we need a further condition on $\Phi$:
\begin{gather}
	\text{There exists $\theta \in \left(\frac{\pi}{2},\pi\right)$ such that $\Phi$ admits a holomorphic extension} \notag\\
	\text{on the complex sector $\mathbb{C}(\theta)$ which is continuous on $\overline{\mathbb{C}(\theta)}$ and satisfies} \tag{\textbf{A3}}  \label{ass4}\\ \text{$\lim_{z \to \infty}\frac{\Phi(z)}{z}=0$ uniformly in $\overline{\mathbb{C}(\theta)}$ and ${\rm Arg}(\Phi(z))<\pi$}\notag 
\end{gather}
Let us first stress that, as evidenced in \cite[Section 3.2.1]{tlms2024}, Assumption \eqref{ass4} is verified whenever $\Phi$ is a complete Bernstein function satisfying Assumption \eqref{ass2} or has the form $\Phi(\lambda)=\Phi_1(\lambda^\alpha)$ for some $\alpha \in (0,1)$ and any other Bernstein function $\Phi_1$ satisfying \eqref{ass2}. Furthermore, a consequence of such a condion is given by the following result, that is a direct consequence of \cite[Theorem 3.18]{tlms2024} for $n=0$.
\begin{prop}\label{prop:boundderfL}
	Suppose that $\Phi$ satisfies Assumption \eqref{ass4}. Then
	\begin{equation}\label{eq:convtonu}
		\lim_{s \downarrow 0}f_L(s,t)=\overline{\nu}(t), \qquad \mbox{locally uniformly with respect to }t>0 
	\end{equation}
	and 
	\begin{equation}\label{eq:linflocpsfl}
	\partial_s f_L \in L^\infty_{\rm loc}(\R^+_0 \times \R^+)	
	\end{equation}
\end{prop}
}

With the previous assumption we can prove the following limit behaviour.
\begin{prop}\label{prop:limitseconder}
	Suppose Assumptions \eqref{ass2} and \eqref{ass4} hold. Then
	\begin{equation}\label{eq:limtder1}
		\lim_{x-y \to 0^\pm}\partial_x p_\Phi(t,x;y)=\mp \overline{\nu}(t)\, \quad \mbox{locally uniformly w.r.t. }t>0.
	\end{equation}	
	Furthermore, if also Assumption \eqref{orcond} holds, then
	\begin{equation}\label{eq:limtder2}
		\displaystyle \frac{\lim\limits_{x-y \to 0^\pm}\partial_x^2 p_\Phi(t,x;y)}{2}=\lim_{x-y \to 0^\pm}\partial_t^\Phi p_\Phi(t,x;y)=-\frac{1}{\sqrt{2\pi}}\int_0^{+\infty}s^{-\frac{1}{2}}\partial_sf_L(s;t)\, ds.
	\end{equation}
	locally uniformly with respect to $t>0$.
\end{prop}
\textcolor{black}{\begin{rmk}\label{rmk:lessrestrict}
	The previous proposition still holds if, in place of Assumption \eqref{ass4}, we ask for \eqref{eq:convtonu} and that for all compact $K \subset (0,+\infty)$ there exists $\delta>0$ and a function $h:(0,\delta) \to \R$ such that 
	\begin{equation*}
		\left|\partial_s f_L(s,t)\right| \le h(s) \ \forall (s,t) \in (0,\delta) \times K \qquad \mbox{ and }\qquad \int_0^{\delta}\frac{h(s)}{\sqrt{s}}\, ds<\infty.
	\end{equation*}
	The latter is implied, according to Proposition \ref{prop:boundderfL}, by Assumption \eqref{ass4}.
\end{rmk}}
The technical proof of Proposition~\ref{prop:limitseconder} is in Appendix \ref{AppLimits}.

\textcolor{black}{Once we have \eqref{eq:limtder1} and \eqref{eq:limtder2}, one can also evaluate explicitly the integral of $\partial_x^2p_\Phi(t,x;y)$, as in the following result.
\begin{prop}\label{prop:secder}
	Let $f \in C_{\sf b}(\R)$ and consider the function
	\begin{equation*}
		u(t,x)=\int_{\R} f(y)p_\Phi(t,x;y)\, dy.
	\end{equation*}
	Then $u \in C(\R_0^+ \times \R)$ with $u(0,x)=f(x)$. Furthermore, for all $t>0$ $u(t,\cdot) \in C^2(\R)$, and $\partial_x u,\partial^2_xu \in C(\R^+ \times \R)$. Moreover, we have
	\begin{equation}\label{eq:der1int}
		\partial_x u(t,x)=\int_{\R} f(y)\partial_x p_\Phi(t,x;y)\, dy
	\end{equation}	
	and
	\begin{equation}\label{eq:der2int}
		 \partial_x^2 u(t,x)=\int_{\R}f(y)\partial_x^2p_\Phi(t,x;y)\, dy-2\nu(t)f(x).
	\end{equation}
	In particular,
	\begin{equation}\label{eq:intofpd2}
		\int_{\R}\partial_x^2p_\Phi(t,x;y)\, dy=2\nu(t)
	\end{equation}
	and \eqref{eq:der2int} can be rewritten as
	\begin{equation}\label{eq:der2int2}
		\partial_x^2 u(t,x)=\int_{\R}(f(y)-f(x))\partial_x^2p_\Phi(t,x;y)\, dy.
	\end{equation}
\end{prop}
Again, the proof of this proposition is technical and thus left in Appendix \ref{Appsecder}.}

\textcolor{black}{Let us introduce the following notation: for a function $f \in L^1[0,T]$, we set
\begin{equation*}
	D^\Phi_t f(t)=\der{}{t}\int_0^t \overline{\nu}(t-\tau)f(\tau)\, d\tau=\der{}{t}\mathcal{I}^\Phi f(t),
\end{equation*}
provided the quantity is well-defined. We observe that, in general,
\begin{equation*}
	\partial^\Phi_t f(t)=D^\Phi_t(f(\cdot)-f(0))(t)=D^\Phi_t f(t)-\overline{\nu}(t)f(0).
\end{equation*}
This operator plays a role of a Riemann-Liouville-type formulation of the generalized fractional derivative in \eqref{eq:gfd}. The advantage of using the Riemann-Liouville-type operator consists in the fact that we do not need the specification of the initial value of the function. Whenever such an initial value is $0$, the two generalized fractional derivatives coincide. This is the case, for instance, of $f_L(s;\cdot)$ when $s>0$ and $p_\Phi(\cdot,x;y)$ when $x \not = y$, leading to
\begin{align*}
	D^\Phi_t f_L(s;t)&=\partial_t^\Phi f_L(s;t), \ s,t>0\\
	D^\Phi_t p_\Phi(t,x;y)&=\partial_t^\Phi p_\Phi(t,x;y), \ t>0, \ x\not = y. 
\end{align*}
It is clear that $\partial_t^\Phi p_\Phi(t,x;x)$ is not well-defined since $\lim_{t \downarrow 0}p_\Phi(t,x;x)=+\infty$. Nevertheless, since the evaluation of $D^\Phi_t p_\Phi(t,x;x)$ would not require the value of $p_\Phi(0,x;x)$, we can prove the following statement, whose proof is left in Appendix~\ref{app:RLderpPhi}
\begin{prop}\label{prop:RLderpPhi}
	Under Assumptions \eqref{ass2}, \eqref{orcond} and \eqref{ass4}, 
	\begin{equation}
		D_t^\Phi p_\Phi(t,x;x)=-\frac{1}{\sqrt{2\pi}}\int_{0}^{+\infty}s^{-\frac{1}{2}}\partial_s f_L(s;t)\, ds.
		\label{eq:RLderpPhi}
	\end{equation}
	Furthermore, for any compact set $K \subset \R^+$ it holds
	\begin{equation}\label{eq:boundDphi}
		\sup_{\substack{t \in K \\ x \in \R}}\left|D_t^\Phi p_\Phi(t,x;x)\right|<\infty.
	\end{equation}
\end{prop}
Notice that we have actually shown that for $t>0$
\begin{equation*}
	2D^\Phi_t p_\Phi(t,x;y)=\begin{cases}
		\displaystyle \partial_x^2 p_\Phi(t,x;y) & x\not = y \\[7pt]
		\displaystyle \lim_{|x-y| \to 0}\partial_x^2 p_\Phi(t,x;y) & x=y,
	\end{cases}
\end{equation*}
where the case $x=y$ is not covered by the actual second derivative due to the fact that $\partial_x p_\Phi(t,\cdot;y)$ admits a jump discontinuity, as evidenced in \eqref{prop:secder}. Differently from $\partial_t^\Phi p_\Phi$, this also shows that $D_t^\Phi p_\Phi \in C(\R^+ \times \R^2)$. Actually, we have the following statement.
\begin{prop}\label{prop:exchangeRL}
	Let $f \in C_{\sf b}(\R)$ and consider the function
	\begin{equation*}
		u(t,x)=\int_{\R} f(y)p_\Phi(t,x;y)\, dy.
	\end{equation*}
	Then
	\begin{equation*}
		D^\Phi_t u(t,x)=\int_{\R} f(y)D^\Phi_t p_\Phi(t,x;y)\, dy
	\end{equation*}
	and, in particular, $D^\Phi_t u \in C(\R^+ \times \R)$.
\end{prop}
The proof is given in Appendix~\ref{app:exchangeRL}
}

\textcolor{black}{In the next theorem, we want to show that actually $p_\Phi$ is the fundamental solution of the time-nonlocal heat equation on the entire real line. Notice that such a result follows as a consequence of \cite[Theorem 2.1]{zqc} when $f \in C^2_0(\R)$. Here, we remove the $C^2$ regularity assumption, substituting it with the less restrictive Dini-continuity assumption to retrieve uniqueness of the solution. Precisely, we say that $f \in C(\R)$ is Dini-continuous if there exists a non-decreasing function $\varpi:[0,1] \to \R$ with the following properties:
\begin{itemize}
	\item $\varpi(0)=0$;
	\item $|f(x)-f(y)| \le \varpi(|x-y|)$ for any $(x,y) \in \R^2$ such that $|x-y| \le 1$;
	\item It holds
	\begin{equation*}\int_{0}^1 \frac{\varpi(y)}{y}\, dy<\infty.
	\end{equation*}
\end{itemize}
We refer to $\varpi$ as the modulus of continuity of $f$. We are now ready to state the aforementioned result.
\begin{thm}\label{thm:Dini1}
		Suppose that Assumptions \eqref{ass2}, \eqref{orcond} and \eqref{ass4} hold. Let $f \in C_{{\sf b}}(\R)$. Then the function $u:\R_0^+ \times \R \to \R$
		\begin{equation}\label{eq:utxsol}
			u(t,x)=\int_{\R} p_\Phi(t,x;y)f(y)\, dy
		\end{equation}
		is a solution of the time-nonlocal Cauchy-Dirichlet problem
		\begin{equation}\label{eq:nonlocfull}
			\begin{cases}
				\displaystyle \partial_t^\Phi u(t,x)=\frac{1}{2}\partial_x^2 u(t,x) & (t,x) \in \R^+ \times \R \\[7pt]
				u(0,x)=f(x) & x \in \R
			\end{cases}
		\end{equation}
		in the sense that:
		\begin{itemize}
			\item[(i)] $u \in C(\R_0^+ \times \R)$;
			\item[(ii)] For all $t>0$, it holds $u(t,\cdot) \in C^2(\R)$;
			\item[(iii)] For all $x \in \R$, it holds $\partial_t^\Phi u(\cdot,x) \in C(\R^+)$;
			\item[(iv)] $u$ satisfies \eqref{eq:nonlocfull}.
		\end{itemize}
		Furthermore if $f$ is Dini-continuous, then
		\begin{itemize}
			\item[(v)] for all $x \in \R$, it holds $\partial_t^\Phi u(\cdot,x), \partial_x^2 u(\cdot,x) \in L^1_{\sf loc}(\R^+_0)$.
		\end{itemize}
		Moreover, if $f \in C_0(\R)$, then
		\begin{itemize}
			\item[(vi)] it holds:
			\begin{equation*}
				\lim_{x \to \infty}u(t,x)=0 \qquad \mbox{ locally uniformly with respect to }t \ge 0.
			\end{equation*}
		\end{itemize}
		Finally, if $f \in C_0(\R)$ and is Dini-continuous, then $u$ is the unique solution of \eqref{eq:nonlocfull} satisfying all the conditions (i) to (vi).
	\end{thm}
\begin{proof}
	Let us consider $u$ as in the statement. We already know that $u \in C(\R_0^+ \times \R)$, with $u(0,x)=f(x)$, and $u(t,\cdot) \in C^2(\R)$ for $t>0$ by Proposition \ref{prop:secder}. To prove that \eqref{eq:nonlocfull} holds, let us first observe that
	\begin{equation}\label{eq:CaptoRL}
		\partial_t^\Phi u(t,x)=D_t^\Phi \left(u(t,x)-f(x)\right)=D_t^\Phi u(t,x)-\overline{\nu}(t)f(x).
	\end{equation}
	This already shows that $\partial_t^\Phi u \in C(\R^+ \times \R)$. Now, by Proposition~\ref{prop:exchangeRL} we get that
	\begin{align*}
		D_t^\Phi u(t,x)=\int_{\R} f(y)D_t^\Phi p_\Phi(t,x;y)\, dy=\frac{1}{2}\int_{\R} f(y) \partial_x^2 p_\Phi(t,x;y)\, dy,
	\end{align*}
	where we used the fact that $D_t^\Phi p_\Phi(t,x;\cdot)=\partial_x^2 p_\Phi(t,x;\cdot)$ almost everywhere for fixed $t>0$ and $x \in \R$. Next, by \eqref{eq:der2int}, we have
	\begin{align*}
		D_t^\Phi u(t,x)=\frac{1}{2}\int_{\R} f(y) \partial_x^2 p_\Phi(t,x;y)\, dy=\frac{1}{2}\partial_x^2 u(t,x)+\overline{\nu}(t)f(x).
	\end{align*}
	Plugging the latter into \eqref{eq:CaptoRL}, we get \eqref{eq:nonlocfull}.\\
	Now assume further that $f$ is Dini-continuous and let us show that $\partial_x^2 u(\cdot,x) \in L^1_{\sf loc}(\R^+_0)$, that in turn clearly implies that $\partial_t^\Phi u(\cdot,x) \in L^1_{\sf loc}(\R^+_0)$. To do this, we use \eqref{eq:der2int2}
	\begin{equation*}
		\partial^2_x u(t,x)=\int_{\R}(f(y)-f(x))\partial^2_x p_\Phi(t,x;y)\, dy.
	\end{equation*}
	Precisely, we prove that the function
	\begin{equation*}
		g(t,x):=\int_{\R}|f(y)-f(x)|\left|\partial^2_x p_\Phi(t,x;y)\right|\, dy
	\end{equation*}
	is Laplace transformable, as this clearly implies the desired claim. To do this, notice that for $\lambda>0$ we have
	\begin{align*}
		\int_0^{+\infty}\int_0^{+\infty}e^{-\lambda t}s^{-\frac{5}{2}}(x-y)^2e^{-\frac{(x-y)^2}{2s}}f_L(s;t)\, dt \, ds&=\frac{\Phi(\lambda)(x-y)^2}{\lambda}\int_{0}^{+\infty}s^{-\frac{5}{2}}e^{-\frac{(x-y)^2}{2s}-s\Phi(\lambda)}\, ds\\
		&=2^{\frac{7}{4}}\sqrt{|x-y|}\frac{\Phi^{\frac{7}{4}}(\lambda)}{\lambda}K_{-\frac{3}{2}}\left(|x-y|\sqrt{2\Phi(\lambda)}\right)
	\end{align*}
	and
	\begin{align*}
		\int_0^{+\infty}\int_0^{+\infty}e^{-\lambda t}s^{-\frac{3}{2}}e^{-\frac{(x-y)^2}{2s}}f_L(s;t)\, dt\, ds&=\frac{\Phi(\lambda)}{\lambda}\int_{0}^{+\infty}s^{-\frac{3}{2}}e^{-\frac{(x-y)^2}{2s}-s\Phi(\lambda)}\, ds\\
		&=2^{\frac{5}{4}}\frac{\Phi^{\frac{5}{4}}(\lambda)}{\lambda\sqrt{|x-y|}}K_{-\frac{1}{2}}\left(|x-y|\sqrt{2\Phi(\lambda)}\right)\\
		&=\sqrt{2\pi}\frac{\Phi(\lambda)}{\lambda |x-y|}e^{-|x-y|\sqrt{2\Phi(\lambda)}}.
	\end{align*}
	Hence, for fixed $x \in \R$, we have
	\begin{align*}
		\int_0^{+\infty}e^{-\lambda t}g(t,x)\, dt
		&\le \frac{2^{\frac{7}{4}}}{\sqrt{2\pi}}\frac{\Phi^{\frac{7}{4}}(\lambda)}{\lambda}\int_{\R}|f(y)-f(x)|\sqrt{|x-y|}K_{\frac{3}{2}}\left(|x-y|\sqrt{2\Phi(\lambda)}\right)\, dy\\
		&\qquad +\frac{\Phi(\lambda)}{\lambda}\int_{\R}\frac{|f(y)-f(x)|}{|x-y|}e^{-|x-y|\sqrt{2\Phi(\lambda)}}\, dy
	\end{align*}
	where we also used the fact that $K_{-\alpha}=K_\alpha$. Now recall (see \cite[Chap. 9, Eqs. 9.6.6 and 9.7.2]{abr68}) that there exists two constant $C_1$ and $C_2$ such that
	\begin{align}
		K_{\frac{3}{2}}(y) &\le C_1y^{-\frac{3}{2}}, &\mbox{ for }y \in (0,1] \label{eq:Bessbound1}\\
		K_{\frac{3}{2}}(y) &\le C_2y^{-\frac{1}{2}}e^{-y}, &\mbox{ for } y \in [1,+\infty) \label{eq:Bessbound2}
	\end{align}
	Let $r_\lambda:=\frac{1}{\sqrt{2\Phi(\lambda)}}$ and  $B_{r_\lambda}(x)=\{y \in \R: \ |x-y|< r_\lambda\}$ and split the integrals as follows:
	\begin{align*}
		\int_0^{+\infty}&e^{-\lambda t}g(t,x)\, dt
		\\
		&\le \frac{2^{\frac{7}{4}}}{\sqrt{2\pi}}\frac{\Phi^{\frac{7}{4}}(\lambda)}{\lambda}\int_{B_{r_\lambda}(x)}|f(y)-f(x)|\sqrt{|x-y|}K_{\frac{3}{2}}\left(|x-y|\sqrt{2\Phi(\lambda)}\right)\, dy\\
		&\qquad +\frac{\Phi(\lambda)}{\lambda}\int_{B_{r_\lambda}(x)}\frac{|f(y)-f(x)|}{|x-y|}e^{-|x-y|\sqrt{2\Phi(\lambda)}}\, dy\\
		&\qquad + \frac{2^{\frac{7}{4}}}{\sqrt{2\pi}}\frac{\Phi^{\frac{7}{4}}(\lambda)}{\lambda}\int_{B_{r_\lambda}^c(x)}|f(y)-f(x)|\sqrt{|x-y|}K_{\frac{3}{2}}\left(|x-y|\sqrt{2\Phi(\lambda)}\right)\, dy\\
		&\qquad +\frac{\Phi(\lambda)}{\lambda}\int_{B_{r_\lambda}^c(x)}\frac{|f(y)-f(x)|}{|x-y|}e^{-|x-y|\sqrt{2\Phi(\lambda)}}\, dy\\
		&=:I_1+I_2+I_3+I_4.
	\end{align*}
	From now on, $C$ will denote any generic constant, possibly still depending on $\lambda$, whose value is irrelevant. To handle $I_1$, notice that by \eqref{eq:Bessbound1} we have
	\begin{equation*}
		I_1 \le C\int_{B_{r_\lambda}(x)}\frac{|f(y)-f(x)|}{|x-y|}\, dy \le C\int_{0}^{1}\frac{\varpi(y)}{y}\, dy<\infty,
	\end{equation*}
	where we used the Dini-continuity assumption. The same holds for $I_2$. Concering $I_3$ instead we have by \eqref{eq:Bessbound2}
	\begin{equation*}
		I_3 \le \Norm{f}{L^\infty(\R)}C \int_{r_\lambda}^{+\infty}e^{-y\sqrt{2\Phi(\lambda)}}\, dy <\infty
	\end{equation*}
	and the same holds for $I_4$.\\
	Next assume that $f \in C_0(\R)$. We want to show that (vi) holds. To do this, let us fix $\varepsilon>0$ and consider $M>0$ such that
	\begin{equation*}
		\sup_{|x|>M}|f(x)|<\frac{\varepsilon}{2}.
	\end{equation*}
	Now notice that
	\begin{equation*}
		\left|u(t,x)\right|=\int_{\R}\left|f(y)\right|p_\Phi(t,x;y)\, dy \le \int_{-M}^{M}|f(y)|p_\Phi(t,x;y)\, dy+\frac{\varepsilon}{2}.
	\end{equation*}
	Let $M^\prime>2M$ and assume that $|x|>M^\prime$. Notice that for $y \in [-M,M]$ we have
	\begin{equation*}
		|x-y| \ge |x|-M.
	\end{equation*}
	and then
	\begin{equation*}
		p_\Phi(t,x;y) \le \frac{1}{\sqrt{2\pi}}\int_{0}^{+\infty}s^{-\frac{1}{2}}e^{-\frac{(|x|-M)^2}{2s}}\, f_L(s;t)\, ds.
	\end{equation*}
	Now consider, for $r>M^\prime$, the function
	\begin{equation*}
		g_r(s)=s^{-\frac{1}{2}}e^{-\frac{(r-M)^2}{2s}}
	\end{equation*}
	and notice that its derivative is given by
	\begin{equation*}
		g'_r(s)=\frac{s^{-\frac{5}{2}}}{2}\left((r-M)^2-s\right)e^{-\frac{(r-M)^2}{2s}},
	\end{equation*}
	hence $g_r$ admits a maximum in $s=(r-M)^2$, leading to
	\begin{equation*}
		g_r(s) \le \frac{e^{-\frac{1}{2}}}{r-M}.
	\end{equation*}
	As a consequence we have
	\begin{equation*}
		p_\Phi(t,x;y) \le \frac{1}{(|x|-M)\sqrt{2\pi e}} < \frac{1}{(M^\prime-M)\sqrt{2\pi e}}
	\end{equation*}
	and then
	\begin{equation*}
		\left|u(t,x)\right|< \frac{2M\Norm{f}{L^\infty(\R)}}{(M^\prime-M)\sqrt{2\pi e}}+\frac{\varepsilon}{2}.
	\end{equation*}
	Hence, if we select
	\begin{equation*}
		M^\prime>\max\left\{\frac{M}{\varepsilon\sqrt{2\pi e}}\left(4\Norm{f}{L^\infty(\R)}+\varepsilon\sqrt{2\pi e}\right),2M\right\}
	\end{equation*}
	we get
	\begin{equation*}
		\sup_{\substack{t \ge 0 \\ |x|>M^\prime}}\left|u(t,x)\right|<\varepsilon.
	\end{equation*}
	Since $\varepsilon>0$ is arbitrary, this shows that $\lim_{x \to \pm \infty}u(t,x)=0$ uniformly with respect to $t \ge 0$.\\
	Finally, assume that $f$ is both in $C_0(\R)$ and Dini-continuous. To prove that $u$ is the unique solution satisfying (i) to (vi), notice that if $\widetilde{u}$ is another solution with the same properties, then $w=u-\widetilde{u}$ satisfies
	\begin{equation*}
		\begin{cases}
		\partial_t^\Phi w(t,x)=\frac{1}{2}\partial_x^2 w(t,x), & (t,x) \in \R^+ \times \R \\
		w(0,x)=0 & x \in \R \\
		\lim_{x \to +\infty}w(t,x)=0 & \mbox{ locally uniformly with respect to $t \ge 0$.}
		\end{cases}
	\end{equation*}
	Hence, by Theorem \ref{thm:weakmax}, we have that $w \equiv 0$ and then $u \equiv \widetilde{u}$.
	\end{proof}
	\begin{rmk}
		\label{adessobasta}
		A quite interesting consequence of the previous theorem consists in the fact that we can underline the difference between $D^\Phi_t p_\Phi$ and $\partial_x^2 p_\Phi$ in terms of their action on bounded continuous functions. Indeed, notice that if $f \in C^\infty_c(\R)$, then setting
		\begin{equation*}
			\widetilde{p}_\Phi(t,x;y)=\partial_x p_\Phi(t,x;y)+2\overline{\nu}(t)1_{\R_0^+}(x-y)
		\end{equation*}
		we have
		\begin{align*}
			\int_{\R} \partial_x f(x)\partial_x p_\Phi(t,x;y)\, dx&=\int_{\R} \partial_x f(x)\widetilde{p}_\Phi(t,x;y)\, dx-2\overline{\nu}(t)\int_{y}^{+\infty}\partial_x f(x)\, dx\\
			&=-\int_{\R} f(x)\partial_x \widetilde{p}_\Phi(t,x;y)\, dx+2\overline{\nu}f(y).
		\end{align*}
		In practice, once we recall that $2D^\Phi_t p_\Phi(t,x;y)$ actually coincides with the derivative of $\widetilde{p}_\Phi(t,x;y)$ on all $t>0$ and $x,y \in \R$, we have the distributional relation
		\begin{equation*}
			D^\Phi_t p_\Phi(t,x;y)=\frac{1}{2}\partial_x^2p_\Phi(t,x;y)-\overline{\nu}(t)\delta_{\{x\}}(dy).
		\end{equation*}
	\end{rmk}
}
\subsection{The semi-Markov property}
\label{secsemi}
Note that $X_\Phi$ is not a Markov process. However, it can be embedded in a strong Markov process as follows. Let
\begin{align}
	H(t) := \sigma (L(t)),
\end{align}
which is called the \textit{overshooting} of $\sigma$. Then the process 
$\ll \l X_\Phi(t), H(t)-t \r, t \geq 0 \rr$ is a time homogeneous strong Markov processes with respect to filtration 
\textcolor{black}{$(\cG_t)_{t \ge 0}$, where $\cG_t=\cF_{L(t)}$ for all $t \ge 0$,}
(see \cite[Section 4]{meerstra}) and furthermore it is a Hunt process (see \cite[Theorem 3.1]{meerstra}). Actually, the process $X_\Phi$ still exhibits the Markov property at the random time $H(t)$. Indeed, since $(B, \sigma)$ is Markov additive in the sense of \cite[Definition 1.4]{cinlar2}, 
$\l \Omega, \cF, \l \cG_{t}\r_{t \geq 0}, \overline{\mathcal{M}}, X_\Phi, \dP_x  \r$ is regenerative in the sense of \cite[Definition 2.2 and Example 2.13]{kaspi}, where the regenerative set $\overline{\mathcal{M}}$ is given by the closure of
\begin{align}
	\mathcal{M}(\omega) \,= \, \ll t \in \R^+_0: t= \sigma(u, \omega) \text{ for some } u\geq 0 \rr, \ \omega \in \Omega.
\end{align}
In particular, $X_\Phi$ {exhibits} the strong regeneration property in the sense of \cite[eq. (2.14)]{kaspi}, so that the strong Markov property holds with respect to any 
$\l \mathcal{G}_{t}\r_{t \geq 0}${-Markov} time taking values in $\cM$.
The reader should also consult \cite{jacod} for instructive discussions of renerative systems and semi-Markov property; we also suggest the recent paper \cite{sampling} for a deep discussion about the trajectories of semi-Markov processes obtain via random time-changed with inverse subordinators.

Let us stress that we can also express $H(t)$ in terms of the random set $\cM$ as follows:
\begin{align}\label{eq:discpoint}
	H(t, \omega) = \inf \ll s>t : {s} \in \cM(\omega) \rr, \omega \in \Omega.
\end{align}
In the following we will be interested in the process obtained by killing $X_\Phi$ upon reaching a certain moving boundary $\varphi:\R_0^+ \to \R$. In particular, we need the crossing time to be a Markov time for $X_\Phi$. Hence, let us first introduce the notation
\begin{equation}\label{fpt}
	\mathcal{T}:=\inf\{t \in \R_0^+: \ X_\Phi(t) \ge \varphi(t)\},
\end{equation}
where we set $\inf \emptyset=+\infty$.
\begin{lem}\label{lem:cTM}
	Under Assumption \eqref{ass2}, if $\varphi$ is non-decreasing and continuous then, $\mathbb{P}_y$-almost surely for all $y \in \R$, either $\mathcal{T} \in \cM$ or $\mathcal{T}=\infty$.
\end{lem}
\begin{proof}
	Let us first observe that if $y=X_\Phi(0) \ge \varphi(0)$, then $\mathcal{T}=0 \in \cM$ $\mathbb{P}_y$-almost surely. Hence, let us assume that $y=X_\Phi(0)<\varphi(0)$. Then either $\cT=\infty$ or $X_\Phi(\cT) \ge \varphi(\cT)$. We only need to study the latter case. For the sake of the reader, we omit the dependence on $\omega \in \Omega$. Being both $X_\Phi$ and $\varphi$ continuous (a.s.), the set $\{t \in [0,+\infty): \ X_\Phi(t)-\varphi(t)=0\}$ is closed and 
	\begin{equation}\label{eq:cTmin}
		0<\cT=\min\{t \in [0,+\infty): \ X_\Phi(t)-\varphi(t)=0\}.	
	\end{equation}
	Now suppose that $\cT \not \in \overline{\cM}$. Then we know that $\cT \in (\sigma(w-), \sigma(w))$ for some $w>0$ and
	$$X_\Phi(\sigma(w-))=X_\Phi(\cT)=\varphi(\cT) \ge \varphi(\sigma(w-)).$$
	Combining this information with the fact that $X_\Phi(0)<\varphi(0)$, we get that there exist $t_0 \in (0,\sigma(w-)]$, and then in particular $t_0<\cT$, such that $X_\Phi(t_0)-\varphi(t_0)=0$, which is a contradiction. Hence $\cT \in \overline{\cM}$. 
	
	Now	 let $\btrev{\delta=\left( {\varphi(0)-X_\phi(0)}\right)/{2}}>0$ and consider the sequence of stopping time
	\begin{equation*}
		\mathcal{T}_n=\inf\left\{t \ge 0: \ X_\Phi(t) \ge \varphi(t)-\frac{\delta}{n}\right\}.
	\end{equation*}
	It is clear that $\cT_n \le \cT$. Furthermore, since both $\varphi$ and $X_\phi$ are (a.s.) continuous, we have
	\begin{equation}\label{eq:preTn}
		X_\Phi(\cT_n)=\varphi(\cT_n)-\frac{\delta}{n}<\varphi(\cT_n)	
	\end{equation}
	and then $\cT_n<\cT$ a.s.. Moreover, $\cT_n$ is an non-decreasing sequence of stopping times with $\cT_n<\cT$ and then $\cT_\star=\sup_{n \in \N} \cT_n \le \cT$. However, taking the limit as $n$ tends to infinity in \eqref{eq:preTn} we achieve
	\begin{equation}
		X_\Phi(\cT_\star)=\varphi(\cT_\star),
	\end{equation}
	which, by \eqref{eq:cTmin}, implies $\cT_\star=\cT$. Thus $\cT_n \uparrow \cT$ with $\cT_n < \cT$ and then, since $(X_\Phi(t),H(t)-t)$ is a Hunt process, $\cT$ must be a continuity point for such a process. This implies that $\cT$ is not isolated on the right, otherwise, if $\cT \in \overline{\cM} \setminus \cM$, by \eqref{eq:discpoint} it would be clear that it is a discontinuity point for $H$, which is a contradiction.
\end{proof}
\begin{rmk}
	In case $\mathcal{T}<\infty$, we have $H(\mathcal{T})=\mathcal{T}$.
\end{rmk}
\begin{rmk}
One can actually apply the previous proof to any fixed open set, even if $B$ is a $N$-dimensional Brownian motion, to get that any exit time from an open set is a Markov time for $X_\Phi$. The latter is called semi-Markov property of $X_\Phi$, see \cite[Definition $1$, Chapter $3$]{harlamov}.
\end{rmk}
\section{The killed delayed Brownian motion and a Dynkin-Hunt formula}
\label{sec:killedbm}
As we stated before, we are interested in the process obtained by killing $X_\Phi$ upon reaching $\varphi$, that is to say
\begin{equation*}
	X_\Phi^\dagger(t):=\begin{cases} X_\Phi(t), & t < \mathcal{T}, \\
		\infty, & t \ge \mathcal{T},
	\end{cases}
\end{equation*}
where $\infty \not \in \R$ is a cemetery point. Such a process defines the family of sub-probability measures on $(\R, \mathcal{B}(\R))$ as follows
\begin{equation*}
	\mathcal{B}(\R) \ni A \mapsto \mathds{P}_y(X_\Phi^\dagger(t) \in A), \ y \in \R, \ t \ge 0.
\end{equation*}
Now we want to prove that for each $y \in \R$ and $t>0$, these sub-probability measures admit a density and we want to relate such a density with the non-killed one $p_\Phi$ through a Dynkin-Hunt-type formula. To do this, we first need to ensure that $\mathcal{T}$ is a continuous random variable, at least for $y<\varphi(0)$.
\begin{lem}\label{lem:noatoms}
	Under Assumption \eqref{ass2}, for all $y<\varphi(0)$, $\mathcal{T}$ is a continuous random variable, i.e. $\mathds{P}_y(\mathcal{T}=w)=0$ for any $w \ge 0$.
\end{lem}
\begin{proof}
	Just notice that for any $w_0\geq 0$ and $y<\varphi(0)$,
	\begin{equation}
		\dP_y(\cT \in \{w_0\})\le \dP_y(X_\Phi(w_0)=\varphi(w_0))=0.
		\label{noatoms}
	\end{equation} 
\end{proof}
From now on, we denote by $\mu_\varphi(\cdot;y)$ the law of $\mathcal{T}$ under $\dP_y$, i.e. for all $A \in \cB(\R)$ we set $\mu_\varphi(A;y)=\dP_y(\mathcal{T} \in A)$. Now we are ready to prove the following Dynkin-Hunt formula.
\begin{thm}\label{lem:intrep}
	Suppose Assumption \eqref{ass2} holds and $\varphi$ is non-decreasing and continuous. Then, setting 
	\begin{align}
		q_\Phi(t,x;y)=p_\Phi(t,x;y)-\int_0^tp_\Phi(t-w,x;\varphi(w))\mu_\varphi(dw;y) \, t>0, \ (x,y) \in \R \times (-\infty,\varphi(0)),
		\label{reprq}
	\end{align}
	and $q_\Phi(t,x;y)=0$ for $t>0$ and $(x,y) \in \R \times [\varphi(0),+\infty)$, we have, for $t > 0$, $y \in \R$ and $A \in \mathcal{B}(\R)$,
	\begin{align}
		\dP_y \l X_\Phi^\dag(t) \in A \r \, = \, \int_A q_\Phi(t,x;y) dx.
	\end{align}
\end{thm}
\begin{proof}
	{The statement is clear if $y \ge \varphi(0)$ as in this case $\btrev{q_\Phi}(t,x;y)=0$ for any $t \ge 0$ and $x \in \R$. Thus let us focus on the case $y < \varphi(0)$.}
	Let us first observe that for any Borel set $A \in \cB(\R)$ {and any $t \ge 0$ it holds}
	\begin{align}
		\dP_y \l X_\Phi^\dag(t) \in A \r &=  \dP_y(\cT >t, X_\Phi(t) \in A) \notag \\
		&=\dP_y(X_\Phi(t) \in A)-\dP_y(\cT\le t, X_\Phi(t) \in A)\nonumber\\
		&=\dP_y(X_\Phi(t) \in A)-\int_0^t\dP_y(X_\Phi(t) \in A| \cT=w)\mu_\varphi(dw;y),\nonumber
	\end{align}
	where the function $w \in \R_0^+ \mapsto \dP_y(X_\Phi(t) \in A| \cT=w)\dP_y \l \mathcal{T} \in dw \r$ is the one that exists by the Doob-Dynkin Lemma applied to the conditional expectation $\E[1_A(X_\Phi(t))\mid \mathcal{T}]$, i.e. it is a regular conditional probability.
	Since $\varphi$ is non-decreasing {and continuous} we have by Lemma \ref{lem:cTM} that $ \cT  \in {\mathcal{M}}$ and thus $X_\Phi$ has the Markov property at $\cT$ (see Section \ref{secsemi}). Hence
	\begin{align}
		\dP_y \l X_\Phi^\dag(t) \in A \r =&\dP_y(X_\Phi(t) \in A)-\int_0^t\dP_y(X_\Phi(t) \in A| X_\Phi(H(w))=\varphi(w), \mathcal{T}=w)\mu_\varphi(dw;y) \notag \\
		=  &\dP_y(X_\Phi(t) \in A)-\int_0^t\dP_{\varphi(w)}(X_\Phi(t-w) \in A) \mu_\varphi(dw;y)  ,
	\end{align}
	where in the second equality we used the same argument as in \cite[Theorem 6.11]{Brownian}, and then, by Fubini's theorem
	\begin{align}
		\dP_y \l X_\Phi^\dag(t) \in A \r =\int_{A} p_\Phi(t,x;y)dx-\int_{A}\int_0^tp_\Phi(t-w,x;\varphi(w))\mu_\varphi(dw;y) dx.
	\end{align}
	Being $A$ any Borel set and $t \ge 0$ arbitrary, we {conclude the proof.}
\end{proof}
Before investigating the regularity properties of $q_\Phi$, let us discuss separately the easier case in which $\varphi(t) \equiv c$ for some $c \in \R$. Some properties related to the constant case will be useful when dealing with the general moving boundary case.
\subsection{The constant boundary case}
Let $c \in \R$ and set
\begin{equation}\label{eq:Tcdef}
	\mathcal{T}_c:=\inf\{t \in \R_0^+: \ X_\Phi(t) \ge c\},
\end{equation}
which is a particular case of \eqref{fpt} when $\varphi(t)=c$ for all $t \ge 0$. We are first interested in the distribution of $\mathcal{T}_c$. Notice that, for $y<c$,
\begin{equation*}
	\dP_y\left(\mathcal{T}_c > t\right)=\dP_y\left(\max_{0 \le s \le t}X_\Phi(s) \le c\right)=\dP_0\left(\max_{0 \le s \le t}X_\Phi(s) \le c-y\right),
\end{equation*}
hence, it could be useful to prove the analogous of the reflection principle for the Brownian motion on $X_\Phi$.
\begin{prop}
	\label{refprinc}
	We have that, for any $c>0$,
	\begin{align}
		\dP_0 \l \max_{0 \leq s \leq t} X_\Phi(s) >c\r = 2 \dP_0 \l X_\Phi(t) > c \r.
	\end{align}
\end{prop}
\begin{proof}
	First of all, notice that $\max\limits_{0 \le s \le t}X_\Phi(s) >c$ if an only if $\max\limits_{0 \le s \le L(t)}B(s) >c$, since the maximum actually coincide, as $X_\Phi$ is the composition of $B$, that is continuous, with $L$, that is continuous and non-decreasing. By conditioning on $L(t)$ and using the independence of $B$ and $L$ we have that
	\begin{align*}
		\dP_0 \l \max_{0 \leq s \leq t} X_\Phi (s) > c \r  = & \dP_0 \l \max_{0 \leq w \leq L(t)} B(w) > c \r \\ 
		=&\mathds{E}_0\left[\dP_0 \l \max_{0 \leq w \leq L(t)} B(w) > c \mid L(t) \r\right] \\
		=& \int_0^\infty \dP_0 \l \max_{0 \leq w \leq s} B(w) > c  \r \, f_L(s, t)  \,ds\\
		=& 2\int_0^\infty \dP_0 \l B(s) > c \r \, f_L(s, t) \, ds \\=& 2\dP_0 \l X_\Phi(t) > c \r.
	\end{align*}
\end{proof}
Hence, as a consequence, we have, for $y<c$,
\begin{equation}\label{eq:Tc}
	\dP_y(\mathcal{T}_c \le t)=2\dP_0(X_\Phi(t)>c-y).
\end{equation}
Let us now investigate the regularity of $\mathcal{T}_c$. To do this we recall that, as done in \cite{annals2020}, if we set
\begin{equation}\label{eq:TcB}
	T_c:=\inf\{t \in \R_0^+: \ B(t) \ge c\}
\end{equation}
we have that $\mathcal{T}_c=\sigma(T_c)$. Using this relation, we can prove the following result.
\begin{prop}\label{prop:44}
	Under Assumptions \eqref{ass2} and \eqref{orcond}, the r.v. $\mathcal{T}_c$ is absolutely continuous with density $p_{\mathcal{T}_c}(\cdot;y)$. Furthermore, for all $T>0$ and any compact sets $I,J \subset \R$ such that $z_1<z_2$ for all $z_1 \in I$ and $z_2 \in J$,
	\begin{align}\label{eq:Sc}
		S(I,J)&:=\sup_{c \in J}\sup_{y \in I}\sup_{0 \le t \le T}p_{\mathcal{T}_c}(t;y)<\infty
	\end{align}	
\end{prop}
\begin{proof}
	Let us observe that since Assumptions \eqref{ass2} and \eqref{orcond} hold, we know that $\sigma$ admits density and then the proof of \cite[Theorem 2.8]{annals2020} can be applied analogously. As a consequence $\mathcal{T}_c$ is absolutely continuous. Furthermore, since $\mathcal{T}_c=\sigma(T_c)$, then
	\begin{equation*}
		\E_y\left[e^{i\xi \mathcal{T}_c}\right]=\E_y\left[\E_y\left[e^{i\xi \sigma(T_c)} \mid T_c\right]\right]=\E_y\left[\textcolor{black}{e^{-T_c\Psi(\xi)}}\right]=\int_{0}^{+\infty}e^{-\Psi(\xi)s}p_{T_c}(s;y)\, ds,
	\end{equation*}
	where
	\begin{equation}\label{eq:pTc}
		p_{T_c}(s;y)=\frac{c-y}{\sqrt{2\pi s^3}}e^{-\frac{(c-y)^2}{2s}}.
	\end{equation}
	Taking the modulus we have
	\begin{equation*}
		\left|\E_y\left[e^{i\xi \mathcal{T}_c}\right]\right| \le \int_{0}^{+\infty}e^{-\Re(\Psi(\xi))s}p_{T_c}(s;y)\, ds
	\end{equation*}
	and then
	\begin{align*}
		\int_{\R}\left|\E_y\left[e^{i\xi \mathcal{T}_c}\right]\right|\, d\xi &\le 2\int_0^{+\infty}\int_{0}^{+\infty}e^{-\Re(\Psi(\xi))s}p_{T_c}(s;y)\, ds d\xi\\
		&\le 2\int_{0}^{+\infty}\int_0^{M_\gamma}e^{-\Re(\Psi(\xi))s}p_{T_c}(s;y)\, d\xi \, ds +2\int_{0}^{+\infty}\int_{M_\gamma}^{+\infty}e^{-C_\gamma \xi^{2-\gamma}s}p_{T_c}(s;y)\, d\xi\, ds \\
		&\le 2M_\gamma+\frac{2\Gamma\left(\frac{1}{2-\gamma}\right)}{(2-\gamma)C_\gamma^{\frac{1}{2-\gamma}}}\E_y\left[T_c^{-\frac{1}{2-\gamma}}\right]<\infty,
	\end{align*}
	where we used \eqref{eq:pTc} to guarantee that $\E_y\left[T_c^{-\frac{1}{2-\gamma}}\right]<\infty$. As a consequence, we can write
	\begin{equation*}
		p_{\mathcal{T}_c}(t;y)=\frac{1}{2\pi}\int_{\R}\int_{0}^{+\infty}e^{-\Psi(\xi)s-i\xi t}p_{T_c}(s;y)\, ds \, d\xi.
	\end{equation*}
	Thus
	\begin{align*}
		\left|p_{\mathcal{T}_c}(t;y)\right| &\le \frac{1}{\pi}\int_{0}^{+\infty}\int_{0}^{+\infty}e^{-\Re(\Psi(\xi))s}p_{T_c}(s;y)\, ds \, d\xi\le \frac{M_\gamma}{\pi}+\frac{\Gamma\left(\frac{1}{2-\gamma}\right)}{\pi(2-\gamma)C_\gamma^{\frac{1}{2-\gamma}}}\E_y\left[T_c^{-\frac{1}{2-\gamma}}\right].
	\end{align*}
	Now notice that if $c \in J$ and $y \in I$, then $c-y \ge \min_{z_1 \in I, \ z_2 \in J}|z_1-z_2|=\varepsilon(I,J)$. Hence
	\begin{align*}
		\E_y\left[T_c^{-\frac{1}{2-\gamma}}\right]&=\frac{1}{\sqrt{2\pi}}\int_{0}^{+\infty}(c-y)s^{-\frac{3}{2}-\frac{1}{2-\gamma}}e^{-\frac{(c-y)^2}{2s}}\, ds \\
		&\le 
		\sqrt{2}e^{-\frac{1}{2-\gamma}}\left(\frac{4}{2-\gamma}\right)^{\frac{1}{2-\gamma}}(c-y)^{-\frac{2}{2-\gamma}}\int_{0}^{+\infty}\frac{(c-y)}{\sqrt{4\pi s^3}}e^{-\frac{(c-y)^2}{4s}}\, ds\\
		& \le \sqrt{2}e^{-\frac{1}{2-\gamma}}\left(\frac{4}{2-\gamma}\right)^{\frac{1}{2-\gamma}}\left(\varepsilon(I,J)\right)^{-\frac{2}{2-\gamma}},
	\end{align*}
	that ends the proof.
\end{proof}

It is relevant that, for $\varphi(t) \equiv c$, we can explicitly determine $q_\Phi$. 
\begin{prop}\label{prop:refltc}
	Suppose Assumption \eqref{ass2} holds and $\varphi(t)=c$ for all $t \ge 0$. Then, for $t>0$ and $x,y<c$ it holds
	\begin{equation*}
		q_\Phi(t,x;y)=p_\Phi(t,x;y)-p_\Phi(t,2c-x;y)
	\end{equation*}
\end{prop}
\begin{proof}
	Let
	\begin{equation*}
		B^\dagger(t)=\begin{cases}
			B(t) & t < T_c \\
			\infty & t \ge T_c
		\end{cases}
	\end{equation*}
	where $T_c$ is defined in \eqref{eq:TcB}. Notice that
	\begin{equation*}
		B^\dagger(L(t))=\begin{cases}
			X_\Phi(t) & L(t) < T_c \\
			\infty & L(t) \ge T_c.
		\end{cases}
	\end{equation*}
	Notice that $L(t)<T_c$ if and only if $\sigma(T_c-) > t$. Furthermore, since $\sigma$ is stochastically continuous and $T_c$ is independent of it, $\sigma(T_c-)=\sigma(T_c)$ a.s. Hence, a.s.
	\begin{equation*}
		B^\dagger(L(t))=\begin{cases}
			X_\Phi(t) & t<\sigma(T_c) \\
			\infty & t \ge \sigma(T_c),
		\end{cases}
	\end{equation*}
	i.e., $B^\dagger(L(t))=X_\Phi^\dagger(t)$ a.s. Now let $q:\R^+ \times \R^2 \to \R$ be such that for all $A \in \cB(\R)$ it holds
	\begin{equation*}
		\dP_y(B^\dagger(t) \in A)=\int_{A} q(t,x;y)\, dx.
	\end{equation*}
	Recall also that for $x,y<c$ (see \cite[Chapter 2, Problem 8.6]{karatzas})
	\begin{align*}
		q(t,x;y)&=p(t,x;y)-p(t,2c-x;y).
	\end{align*}
	Hence, by a simple conditioning argument, recalling that $L(t)$ and $B^\dagger$ are independent, we have, if $A \subset (-\infty,c)$ and $y<c$,
	\begin{align*}
		\dP_y(X_\Phi^\dagger(t) \in A)=\int_{A} \int_0^{+\infty} q(s,x;y) f_L(s;t)\, ds \, dx
		=\int_{A} \left(p_\Phi(t,x;y)-p_\Phi(t,2c-x;y)\right) dx
	\end{align*}
	 that ends the proof.
\end{proof}
\begin{rmk}
	The same result can be obtained by employing the reflection principle in Proposition \ref{refprinc} together with the fact that $X_\Phi$ is Markov in $\mathcal{T}_c$, as a consequence of Lemma \ref{lem:cTM}. In this case, the proof proceeds exactly as in the classical case of the Brownian motion.
\end{rmk}
\textcolor{black}{Before proceeding with the non-constant boundary case, let us study the process $(\mathcal{T}_c)_{c \ge 0}$.
\begin{prop}\label{prop:subordinatorT}
	Under $\dP_0$, the process $(\mathcal{T}_t)_{t \ge 0}$ is a subordinator with Laplace exponent $\widetilde{\Phi}(\lambda)=\sqrt{2\Phi(\lambda)}$.
\end{prop}
\begin{proof}
	First notice that we have $\mathcal{T}_0=0$. Next, since $\mathcal{T}_t$ are regeneration times for $X_\Phi$ and thus $X_\Phi$ exhibits the strong Markov property in $\mathcal{T}_t$, it is clear that if $t_1<t_2 \le t_3<t_4$, then $\mathcal{T}_{t_2}-\mathcal{T}_{t_1}$ is independent of $\mathcal{T}_{t_4}-\mathcal{T}_{t_3}$. Next, consider $t_1<t_2$. Then, recalling that by definition $\mathcal{T}_t=\sigma(T_t-)$, where $\sigma$ is the subordinator with inverse $L$ such that $X_\Phi(t)=B(L(t))$, we have
	\begin{align*}
		\E_0[e^{-\lambda (\mathcal{T}_{t_2}-\mathcal{T}_{t_1})}]&=\E_0[e^{-\lambda \left(\sigma(T_{t_2})-\sigma(T_{t_1})\right)}]=\E_0\left[\E_0\left[e^{-\lambda \left(\sigma(T_{t_2}-T_{t_1})\right)} \mid B(t), \ t \ge 0\right]\right]\\
		&=\E_0\left[e^{-(T_{t_2}-T_{t_1})\Phi(\lambda)}\right]=e^{-(t_2-t_1)\sqrt{2\Phi(\lambda)}},
	\end{align*}
	ending the proof.
\end{proof}
}

\subsection{The non-decreasing and continuous boundary case}
In this section we discuss some regularity properties of $\mu_\varphi(\cdot;y)$ and $q_\Phi$ for a more general moving boundary $\varphi$. First of all, we prove the following regularity result concerning $\mu_\varphi(\cdot;y)$.
\begin{thm}\label{thm:density}
	Suppose that Assumptions \eqref{ass2} and \eqref{orcond} hold. Then the law $\mu_\varphi(\cdot;y)$ of $\mathcal{T}$ under $\dP_y$ is absolutely continuous with respect to the Lebesgue-Stieltjes measure $d(\varphi+\iota)$, where $\iota(t)=t$ for any $t \ge 0$. Furthermore, if $\varphi$ is absolutely continuous, then $\mathcal{T}$ is an absolutely continuous random variable and, if $\varphi$ is locally Lipschitz, the density $p_{\mathcal{T}}(\cdot;y)$ of $\mathcal{T}$ satisfies
	\begin{equation}\label{eq:STIT}
		S_{\mathcal{T}}(I,T)=\sup_{t \in [0,T], y \in I}p_{\mathcal{T}}(t,y)<\infty.
	\end{equation}
\end{thm}
\begin{proof}
	Throughout the proof, we denote $J=[\varphi(0),\varphi(T)]$. First of all, notice that, since $\mathcal{T}$ is a continuous random variable, then $\mu_\varphi([s,t];y)=\mu_\varphi((s,t];y)=\mu_\varphi([s,t);y)=\mu_\varphi((s,t);y)$ for all $0\le a<b$. Hence, let us focus on the measure $\mu_\varphi((s,t];y)$. Now let $\omega \in \Omega$ such that $\mathcal{T}(\omega) \in (s,t]$. Then $\mathcal{T}_{\varphi(s)}(\omega) \le \mathcal{T}(\omega) \le \mathcal{T}_{\varphi(t)}(\omega)$. Hence $\mathcal{T}_{\varphi(s)}(\omega) \le t$ and $\mathcal{T}_{\varphi(t)}(\omega)>s$. Hence
	\begin{align*}
			\mu_\varphi((s,t];y) \le \dP_y(\mathcal{T}_{\varphi(s)} \le t, \ \mathcal{T}_{\varphi(t)}>s)
		=&\dP_y(s \le \mathcal{T}_{\varphi(s)} \le t, \ \mathcal{T}_{\varphi(t)}>s)+\dP_y(\mathcal{T}_{\varphi(s)} < s, \ \mathcal{T}_{\varphi(t)}>s)\\
		=:&P_1(t,s)+P_2(t,s).
	\end{align*}
	Concerning $P_1(t,s)$ we have
	\begin{equation*}
		P_1(t,s) \le \dP_y(s \le \mathcal{T}_{\varphi(s)} \le t) \le S(I,J)(t-s).
	\end{equation*}
	Now we handle $P_2(t,s)$. We have
	\begin{equation*}
		P_2(t,s)=\E_y\left[\mathbf{1}_{[0,s)}(\mathcal{T}_{\varphi(s)})\E_y\left[\mathbf{1}_{[s-\mathcal{T}_{\varphi(s)},+\infty)}(\mathcal{T}_{\varphi(t)}-\mathcal{T}_{\varphi(s)}) \mid \mathcal{G}_{\mathcal{T}_{\varphi(s)}}\right]\right].
	\end{equation*}
	By Lemma \ref{lem:cTM}, we know that $X_\Phi$ satisfies the Markov property in $\mathcal{T}_{\varphi(s)}$ and $(X_\Phi(t),H(t)-t)$ is time-homogeneous, then, recalling that $H(\mathcal{T}_{\varphi(s)})=\mathcal{T}_{\varphi(s)}$, we get
	\begin{equation*}
		\E_y\left[\mathbf{1}_{[s-\mathcal{T}_{\varphi(s)},+\infty)}(\mathcal{T}_{\varphi(t)}-\mathcal{T}_{\varphi(s)}) \mid \mathcal{G}_{\mathcal{T}_{\varphi(s)}}\right]=G(\mathcal{T}_{\varphi(s)};s),
	\end{equation*}
	where, since $\mathcal{T}_{\varphi(t)}$ is absolutely continuous, for $\tau \in [0,s)$
	\begin{align*}
		G(\tau;s)&:=\E_{\varphi(s)}\left[\mathbf{1}_{[s-\tau,+\infty)}(\mathcal{T}_{\varphi(t)})\right]=\dP_{\varphi(s)}(\mathcal{T}_{\varphi(t)}\ge s-\tau)\\
		&=1-\dP_{\varphi(s)}(\mathcal{T}_{\varphi(t)}\le s-\tau)=1-2\dP_{0}(X_\Phi(s-\tau)>\varphi(t)-\varphi(s))
	\end{align*}
	where we used the reflection principle. However, notice also that
	\begin{multline*}
		2\dP_{0}(X_\Phi(s-\tau)>\varphi(t)-\varphi(s))\\=\dP_{0}(X_\Phi(s-\tau)>\varphi(t)-\varphi(s))+\dP_{0}(X_\Phi(s-\tau)<\varphi(s)-\varphi(t))\\
		=\dP_{0}(|X_\Phi(s-\tau)|>\varphi(t)-\varphi(s))
	\end{multline*}
	and then
	\begin{align*}
		G(\tau;s)&=\dP_{0}(|X_\Phi(s-\tau)| \le \varphi(t)-\varphi(s))=\int_{\varphi(s)-\varphi(t)}^{\varphi(t)-\varphi(s)}p_{\Phi}(s-\tau,z;0)\, dz.
	\end{align*}
	As a consequence, we finally achieve
	\begin{equation*}
		P_2(t,s)=\int_0^s\int_{\varphi(s)-\varphi(t)}^{\varphi(t)-\varphi(s)}p_{\Phi}(s-\tau,z;0) p_{\mathcal{T}_{\varphi(s)}}(\tau;y)\, dz \, d\tau.
	\end{equation*}
	By \eqref{eq:Sc}, we get
	\begin{align*}
		P_2(t,s) &\le \frac{\sqrt{2}S(I,J)}{\sqrt{\pi}}(\varphi(t)-\varphi(s))\int_0^sU_{-\frac{1}{2}}(s-\tau)\, d\tau \\
		&\le \frac{\sqrt{2}S(I,J)}{\sqrt{\pi}}(\varphi(t)-\varphi(s))\int_0^TU_{-\frac{1}{2}}(\tau)\, d\tau <\infty,
	\end{align*}
	where we used that
	\begin{equation*}
		p_\Phi(s-\tau,z;0)=\E_0\left[\frac{1}{\sqrt{2\pi L(s-\tau)}}e^{-\frac{z^2}{2L(s-\tau)}}\right] \le \frac{U_{-\frac{1}{2}}(s-\tau)}{\sqrt{2\pi}} 
	\end{equation*}
	and $U_{-\frac{1}{2}} \in L^1_{\rm loc}(\R_0^+)$ by Proposition \ref{prop:U12L1}.
	
	Resuming, we have shown that there for all $T>0$ and any compact $I \subset (-\infty,\varphi(0))$, it holds
	\begin{equation*}
		\mu_\varphi((s,t];y) \le C(I,T)(t-s+\varphi(t)-\varphi(s)), \ \forall 0 \le s<t \le T, \ y \in I.
	\end{equation*}
	This also shows that, setting $\iota(t)=t$ and $d(\varphi+\iota)$ the distributional derivative of $\varphi+\iota$, which is a Radon measure, it holds $\mu_\varphi(\cdot;y)$ is absolutely continuous with respect to $d(\varphi+\iota)$ and the Radon-Nikodym derivative of $\mu_\varphi(\cdot;y)$ with respect to $d(\varphi+\iota)$ is a locally bounded function. In particular, if $\varphi$ is an absolutely continuous function, then $\mathcal{T}$ is an absolutely continuous random variable and if $\varphi$ is locally Lipschitz then its density $p_{\mathcal{T}}(\cdot;y)$ satisfies for any $T>0$ and any compact set $I \subset (-\infty,\varphi(0))$
	\begin{equation*}
		\sup_{t \in [0,T], y \in I}p_{\mathcal{T}}(t;y) \le C(I,T)(1+{\sf Lip}(\varphi;T))<\infty,
	\end{equation*}
	where
	\begin{equation*}
		{\sf Lip}(\varphi;T)=\sup_{\substack{t,s \in [0,T] \\ t \not = s}}\frac{|\varphi(t)-\varphi(s)|}{|t-s|}.
	\end{equation*}
\end{proof}
With the previous result in mind, we can prove the continuity of $q_\Phi(\cdot,\cdot;y)$.
\begin{thm}\label{thm:cont}
	Suppose that Assumptions \eqref{ass2} and \eqref{orcond} hold and $\varphi$ is non-decreasing, continuous and locally Lipschitz. Then $q_\Phi(\cdot,\cdot;y)$ is continuous on $\R^+ \times \R$ and $q_\Phi(t,x;y)=0$ for all $x \ge \varphi(t)$ and $t>0$.
\end{thm}
\begin{proof}
	Let us denote
	\begin{equation*}
		r_\Phi(t,x;y)=\int_0^t p_\Phi(t-w,x;\varphi(w))p_{\mathcal{T}}(w;y)\, dw, \ (t,x) \in \R^+ \times \R,
	\end{equation*}
	where the density $p_{\mathcal{T}}(\cdot;y)$ exists by Theorem \ref{thm:density}.
	It is clear that if $r_\Phi(\cdot,\cdot;y)$ is continuous in $(t,x)$, then also $q_\Phi(\cdot,\cdot;y)$ is continuous in $(t,x)$ and vice versa. First, consider the case of a point $(t,x) \in \R^+ \times \R$ with $x \not = \varphi(t)$. Then, if we consider a sequence $(t_n,x_n) \to (t,x)$, it is not difficult to check that there exists a compact set $K$ such that $(0,0) \not \in K$ and $(t_n-w,x_n-\varphi(w)) \in \mathring{K}$ for $n \in \N$ \textcolor{black}{large enough} and $w \in [0,t_n]$. Hence we have
	\begin{equation*}
		p_\Phi(t_n-w,x_n;\varphi(w))\mathbf{1}_{(0,t_n)}(w) \le \sup_{(\tau,\xi) \in K}p_\Phi(\tau,\xi;0)
	\end{equation*}
	and then $\lim_{n \to \infty}r_\Phi(t_n,x_n;y)=r_\Phi(t,x;y)$ by the dominated convergence theorem. 
	
	Now we need to handle the case $(t,x)=(t,\varphi(t))$. This requires several steps. First of all, let $x^\prime \in \R$, \textcolor{black}{$T \ge t$} and consider
	\begin{align}\label{eq:firstboundcont}
		\begin{split}
		|r_\Phi(t,x^\prime;y)&-r_\Phi(t,\varphi(t);y)|\\
		&\le \int_0^t \int_0^{+\infty}|p(s,x^\prime;\varphi(w))-p(s,\varphi(t);\varphi(w))|f_L(s;t-w)\, p_{\mathcal{T}}(w;y)ds \, dw \\
		& \le S_{\mathcal{T}}(\{y\},T)\int_0^t \int_0^{+\infty}\left(\int_{m(w)}^{M(w)}\frac{|\xi|}{\sqrt{2\pi}s^{\frac{3}{2}}}e^{-\frac{\xi^2}{2s}}\, d\xi\right) f_L(s;t-w)\, ds \, dw,	
		\end{split}
	\end{align}
	where $m(w)=(x^\prime-\varphi(w))\wedge (\varphi(t)-\varphi(w))$, $M(w)=(x^\prime-\varphi(w))\vee (\varphi(t)-\varphi(w))$ and we used \eqref{eq:STIT}. Now fix $\varepsilon \in \left(0,\frac{1}{2}\right)$ and recall that, for any $\lambda>0$ and $\tau \ge 0$, it holds
	\begin{equation*}
		\tau^{1-\varepsilon}e^{-\lambda \tau} \le \left(\frac{1-\varepsilon}{\lambda}\right)^{1-\varepsilon}e^{-(1-\varepsilon)}.
	\end{equation*}
	Using $\tau=s^{-1}$ and $\lambda=\xi^2$, we get
	\begin{equation*}
		\frac{|\xi|}{\sqrt{2\pi}s^{\frac{3}{2}}}e^{-\frac{\xi^2}{2s}} \le \frac{2^{1-\varepsilon}(1-\varepsilon)^{1-\varepsilon}e^{-(1-\varepsilon)}}{\sqrt{2\pi} s^{\frac{1}{2}+\varepsilon}|\xi|^{1-2\varepsilon}}.
	\end{equation*}
	Notice that \textcolor{black}{$M(w) \ge 0$}
	 for $w \in (0,t)$ and let us distinguish among two cases. If $m(w) \ge 0$, that is to say $x^\prime-\varphi(w) \ge 0$ then
	\begin{align*}
		\int_{m(w)}^{M(w)}\frac{|\xi|}{\sqrt{2\pi}s^{\frac{3}{2}}}e^{-\frac{\xi^2}{2s}}\, d\xi &\le \frac{2^{1-\varepsilon}(1-\varepsilon)^{1-\varepsilon}e^{-(1-\varepsilon)}}{\sqrt{2\pi} s^{\frac{1}{2}+\varepsilon}}\int_{m(w)}^{M(w)}\xi^{-1+2\varepsilon}\, d\xi\\
		&=\frac{2^{1-\varepsilon}(1-\varepsilon)^{1-\varepsilon}e^{-(1-\varepsilon)}}{2\varepsilon \sqrt{2\pi} s^{\frac{1}{2}+\varepsilon}}(M(w)^{2\varepsilon}-m(w)^{2\varepsilon}) \\
		&\le \frac{2^{1-\varepsilon}(1-\varepsilon)^{1-\varepsilon}e^{-(1-\varepsilon)}}{2\varepsilon \sqrt{2\pi} s^{\frac{1}{2}+\varepsilon}}|x^\prime-\varphi(t)|^{2\varepsilon}.
	\end{align*}
	If instead $m(w)<0$, then $m(w)=x^\prime-\varphi(w)$, $M(w)=\varphi(t)-\varphi(w)$ and we have
	\begin{align*}
		\int_{m(w)}^{M(w)}\frac{|\xi|}{\sqrt{2\pi}s^{\frac{3}{2}}}e^{-\frac{\xi^2}{2s}}\, d\xi &\le \frac{2^{1-\varepsilon}(1-\varepsilon)^{1-\varepsilon}e^{-(1-\varepsilon)}}{\sqrt{2\pi} s^{\frac{1}{2}+\varepsilon}}\left(\int_{0}^{\varphi(t)-\varphi(w)}\xi^{-1+2\varepsilon}+\int_{0}^{\varphi(w)-x^\prime}\xi^{-1+2\varepsilon}\right)\\
		&=\frac{2^{1-\varepsilon}(1-\varepsilon)^{1-\varepsilon}e^{-(1-\varepsilon)}}{2\varepsilon \sqrt{2\pi} s^{\frac{1}{2}+\varepsilon}}((\varphi(t)-\varphi(w))^{2\varepsilon}+(\varphi(w)-x^\prime)^{2\varepsilon}) \\
		&\le \frac{2^{1-\varepsilon}(1-\varepsilon)^{1-\varepsilon}e^{-(1-\varepsilon)}}{2\varepsilon \sqrt{2\pi} s^{\frac{1}{2}+\varepsilon}}2^{1-2\varepsilon}|x^\prime-\varphi(t)|^{2\varepsilon}.
	\end{align*}
	Hence, in general
	\begin{align*}
		\int_{m(w)}^{M(w)}\frac{|\xi|}{\sqrt{2\pi}s^{\frac{3}{2}}}e^{-\frac{\xi^2}{2s}}\, d\xi \le \frac{2^{1-3\varepsilon}(1-\varepsilon)^{1-\varepsilon}e^{-(1-\varepsilon)}}{\varepsilon \sqrt{2\pi} s^{\frac{1}{2}+\varepsilon}}|x^\prime-\varphi(t)|^{2\varepsilon}.
	\end{align*}
	Going back to \eqref{eq:firstboundcont}, we get
	\begin{align*}
		|r_\Phi(t,x;y)&-r_\Phi(t,\varphi(t);y)|\\
		&\le 
		S_{\mathcal{T}}(\{y\},T)\frac{2^{1-3\varepsilon}(1-\varepsilon)^{1-\varepsilon}e^{-(1-\varepsilon)}}{\varepsilon \sqrt{2\pi}}|x^\prime-\varphi(t)|^{2\varepsilon}\int_0^t U_{-\frac{1}{2}-\varepsilon}(t-w)\, dw\\
		&\le S_{\mathcal{T}}(\{y\},T)\frac{2^{1-3\varepsilon}(1-\varepsilon)^{1-\varepsilon}e^{-(1-\varepsilon)}}{\varepsilon \sqrt{2\pi}}|x^\prime-\varphi(t)|^{2\varepsilon}\int_0^T U_{-\frac{1}{2}-\varepsilon}(w)\, dw<\infty.
	\end{align*}
	by Proposition \ref{prop:U12L1}. This proves that for all $\varepsilon \in \left(0,\frac{1}{2}\right)$, $y<\varphi(0)$ and $T>0$ there exists a constant $C_\varepsilon(y;T)$ such that
	\begin{equation}\label{eq:control}
		|r_\Phi(t,x^\prime;y)-r_\Phi(t,\varphi(t);y)| \le C_\varepsilon(y;T)|x^\prime-\varphi(t)|
	\end{equation}
	for $t \in (0,T)$ and $x^\prime \in \R$. Furthermore, this guarantees that for fixed $t>0$ the function $r_\Phi(t,\cdot;y)$ is continuous in $\varphi(t)$ and thus also $q_\Phi(t,\cdot;y)$.
	
	Now we prove that $q_\Phi(t,x;y)=0$ for all $x \ge \varphi(t)$. Indeed, fix $t>0$ and let $x>\varphi(t)$. Consider $\delta>0$ such that $x-\delta>\varphi(t)$ and let $A=[x-\delta,x+\delta]$. Then
	\begin{equation*}
		\frac{1}{2\delta}\int_{x-\delta}^{x+\delta}q_\Phi(t,z;y)\, dz=\frac{1}{2\delta}\dP_y(X_\Phi^\dagger(t) \in A)=0
	\end{equation*}
	taking the limit as $\delta \to 0$, by continuity of $q_\Phi(t,\cdot;y)$ in $x$, we have $q_\Phi(t,x;y)=0$ for all $x>\varphi(t)$. Then $q_\Phi(t,\varphi(t);y)=0$ follows again by continuity.
	
	Now we are ready to prove that $r_\Phi(\cdot,\cdot;y)$ is continuous in $(t,\varphi(t))$. For this, let $(t_n,x_n) \to (t,\varphi(t))$ and observe that
	\begin{equation*}
		r_\Phi(t_n,\varphi(t_n);y)=p_\Phi(t_n,\varphi(t_n);y)-q_\Phi(t_n,\varphi(t_n);y)=p_\Phi(t_n,\varphi(t_n);y).
	\end{equation*}
	Taking the limit, we have 
	\begin{equation*}
	\lim_{n \to \infty}r_\Phi(t_n,\varphi(t_n);y)=\lim_{n \to \infty}p_\Phi(t_n,\varphi(t_n);y)=p_\Phi(t,\varphi(t);y)=r_\Phi(t,\varphi(t);y).	
	\end{equation*}
	Hence, it is sufficient to prove that
	\begin{equation*}
		\lim_{n \to \infty}\left|r_\Phi(t_n,x_n;y)-r_\Phi(t_n,\varphi(t_n);y)\right|=0.
	\end{equation*}
	However, since $t_n \to t$, there exists $T>0$ such that $t_n,t<T$ for all $n \in \N$ and then, by \eqref{eq:control}, for some $\varepsilon <\frac{1}{2}$
	\begin{equation*}
		\left|r_\Phi(t_n,x_n;y)-r_\Phi(t_n,\varphi(t_n);y)\right| \le C_\varepsilon(y;T)|x_n-\varphi(t_n)| \to 0,
	\end{equation*}
	ending the proof.
\end{proof}

Concerning the continuity in the $y$ variable, we need some further assumption on the moving boundary and on $\Phi$.
\begin{thm}
		\label{thm314}
	Suppose	Assumptions \eqref{ass2} and \eqref{orcond} hold. Let $\varphi:\R_0^+ \to \R$ be non-decreasing, continuous and locally Lipschitz. Assume further that
	\begin{equation}\label{384}
		\limsup_{t \to \infty}(X_\Phi(t)-\varphi(t))=+\infty, \ \textcolor{black}{\mbox{a.s.}}
	\end{equation}
	and either $\varphi$ is concave or $\lim_{\lambda \to \infty}\frac{\Phi(2\lambda)}{\Phi(\lambda)}<2$. Then $q_\Phi:\R^+ \times \R \times (-\infty,\varphi(0)) \to \R^+_0$ is continuous.
\end{thm}
\begin{proof}
	Let us first show that for fixed $(t,x) \in \R^+ \times \R$ the function $y \in (-\infty,\varphi(0)) \mapsto q_\Phi(t,x;y)$ is continuous. If $x \ge \varphi(t)$, this is obvious, hence let us assume that $x<\varphi(t)$. Arguing as in Theorem \ref{thm:cont}, we only have to show that $r_\Phi(t,x;\cdot)$ is continuous. To prove that $r_\Phi(t,x;\cdot)$ is continuous in $y$, by Helly's Theorem (e.g., \cite[Theorem 4, page 370]{kolfom}), since $\mathcal{T}$ is a continuous random variable by Lemma \ref{lem:noatoms}, {it will be sufficient to prove} that $\mathds{P}_{y_n} \l \mathcal{T} \leq t \r \to \mathds{P}_{y} \l \mathcal{T} \leq t\r$ for every $t \ge 0$ and $\{y_n\}_{n \in \N}\subset (-\infty,\varphi(0))$ such that $y_n \to y \in (-\infty,\varphi(0))$. Note that
	\begin{align}
		\mathds{P}_y \l \mathcal{T} \leq t \r \, = \, \mathds{P}_0 \l \mathcal{T}^y \leq t \r
	\end{align}
	where
	\begin{align}
		\mathcal{T}^y : = \inf \ll t \geq 0 : y+ X_\Phi(t) -\varphi(t){>}0 \rr.
	\end{align}
	\textcolor{black}{From now on, we will denote $\mathds{P}:=\mathds{P}_0$.} Therefore we have to prove that
	\begin{align}
		\mathds{P} \l \mathcal{T}^{y_n} \leq t \r \, \to \, \mathds{P} \l \mathcal{T}^y \leq t \r
		\label{481}
	\end{align}
	whenever $y_n \to y$. To do this, we proceed as follows. Let $D(\R_0^+)$ be the space of real valued c\`{a}dl\`{a}g functions and let $D_\infty(\R_0^+) \subset D(\R_0^+)$ the set containing elements $d \in D(\R_0^+)$ such that $\lim_{t \to +\infty} \sup_{0 \leq s \leq t} d(s) = \infty$. Since $\limsup_{t \to \infty}X_\Phi(t)-\varphi(t)=+\infty$ a.s., then for any $n \in \N$ it holds $y_n+X_\Phi(\cdot)-\varphi(\cdot) \in D_\infty(\R^+_0)$ a.s. Furthermore, it is clear that $y+X_\Phi(\cdot)-\varphi(\cdot) \in C(\R_0^+)$ almost surely and that $y_n +X_\Phi(\cdot) -\varphi(\cdot) \to y+X_\Phi(\cdot)-\varphi(\cdot)$ almost surely in the uniform topology of $C(\R_0^+)$, thus implying that the convergence also holds in $D_\infty(\R_0^+)$ with Skorokhod's $M$ topology (see \cite[Theorem 12.3.2 and Eq. (3.2) in Section 3.3]{whittbook}).
	
	Now we show that the distribution of $\max_{0 \le s \le t}y+X_{\Phi}(s)-\varphi(s)$ cannot admit an atom at $0$. Indeed
	\begin{align}
		\dP &\l \max_{0 \leq w \leq t} (y+X_\Phi(w)-\varphi(w)) =0\r \notag \\ &\leq \, \dP \l y+X_\Phi(w) -\varphi(w) \leq 0 , \forall w \in [\mathcal{T}^y, t] , \cT^y \leq t \r \notag \\
		=\, & \int_0^t \dP \l y+X_\Phi(w) - \varphi(w) \leq 0 , \forall w \in [z,t] \mid \mathcal{T}^y=z \r \dP\l \mathcal{T}^y \in dz \r .
	\end{align}
	We recall that, by Lemma \ref{lem:cTM}, $X_\Phi$ satisfies the strong Markov property at $\mathcal{T}^y$, hence
	\begin{align}
		&\dP \l y+X_\Phi(w) - \varphi(w) \leq 0 , \forall w \in [z,t] \mid \mathcal{T}^y=z \r \notag \\
		= \, &\dP_{\varphi(z)-y} \l y+X_\Phi(w) - \varphi(w+z) \leq 0 , \forall w \in [0,t-z] \r \notag\\
		\leq \, & \dP_{\varphi(z)-y} \l  \exists \overline{\varepsilon} >0: \forall t \in [0,\overline{\varepsilon}], y+X_\Phi(t) - \varphi(z+t) \leq 0 \r.
		\label{this9}
	\end{align}
	Now, since \eqref{384} holds, if $\lim_{\lambda \to \infty}\frac{\Phi(2\lambda)}{\Phi(\lambda)}<2$ we use a restatement of \cite[Proposition III.10]{bertoinb} in terms of the inverse of the subordinator (as for instance done in \cite[Proposition 4.4]{bertoins} for \cite[Theorem III.9]{bertoinb}), otherwise if $\varphi$ is concave, then $t \in \R^+ \mapsto \frac{\varphi(z+t)-\varphi(z)}{t} \in \R$ is non-increasing and we can use \cite[Proposition 4.4]{bertoins}. In both cases, we get
	\begin{align}
		\lim_{t \to 0} \frac{L(t)}{\varphi(z+t)-\varphi(z)} = +\infty, \text{ a.s.,}
	\end{align}
	from which it follows that, for almost all $\omega \in \Omega$, there exists $\overline{\varepsilon}_0(\omega)$ such that $\varphi(t+z)-\varphi(z) \leq L(t, \omega)$, for all $t \in [0, \overline{\epsilon}_0(\omega)]$, {that implies that}, a.s., \textcolor{black}{$y+X_\phi(t, \omega)-\varphi(t+z) \geq y+X_\Phi(t, \omega)-\varphi(z)-L(t, \omega)$}, for all $t \in [0,\overline{\varepsilon}_0(\omega)]$.
	\btrev{Fix now $\varepsilon_0>0$ small enough and use \eqref{this9} for $\bar{\varepsilon} < \varepsilon_0$, to get}
		\begin{align}
		\dP & \l y+X_\Phi(w) - \varphi(w) \leq 0 , \forall w \in [z,t] \mid \mathcal{T}^y=z \r \notag \\ \leq \, &\dP_{\varphi(z)-y} \l \exists \overline{\varepsilon} < \overline{\varepsilon}_0 : \forall t \in [0, \overline{\varepsilon}], y+X_\Phi(t)-\varphi(z+t) \leq 0 \r \notag \\
		\leq \, & \dP_{\varphi(z)-y} \l \exists \overline{\varepsilon} < \overline{\varepsilon}_0 : \forall t \in [0, \overline{\varepsilon}], y+X_\Phi(t)-L(t)-\varphi(z) \leq 0 \r \notag \\
		= \, &\dP_{\varphi(z)} \l \exists \overline{\varepsilon} < \overline{\varepsilon}_0 : \forall t \in [0, \overline{\varepsilon}], B(L(t))-L(t) \leq \varphi(z) \r.
		\label{386}
	\end{align}
	Now notice that
	\begin{align}
		\ll B(L(t, \omega), \omega)-L(t, \omega): t \in [0,\overline{\varepsilon}_0(\omega)] \rr \, = \, \ll B(t, \omega)-t: t \in [0, L(\overline{\varepsilon}_0(\omega), \omega)]  \rr
	\end{align}
	hence
	\begin{align*}
		\dP_{\varphi(z)} & \l \exists \overline{\varepsilon} < \overline{\varepsilon}_0 : \forall t \in [0, \overline{\varepsilon}], B(L(t))-L(t) \leq \varphi(z) \r\\
		&=\dP_{\varphi(z)} \l \exists \overline{\varepsilon} < L(\overline{\varepsilon}_0) : \forall t \in [0, \overline{\varepsilon}], B(t)-t \leq \varphi(z) \r\\
		&=\E_{\btrev{\varphi(z)}}\left[\dP_{\varphi(z)} \l \exists \overline{\varepsilon} < L(\overline{\varepsilon}_0) : \forall t \in [0, \overline{\varepsilon}], B(t)-t \leq \varphi(z) \r \mid \{L(t), \ t \ge 0\}\right]=0,
	\end{align*}
	where we notice that $\overline{\varepsilon}_0$ is a functional of $\{L(t), t \ge 0\}$ and $L(\overline{\varepsilon}_0)>0$. This proves that \eqref{386} is zero.
	
	Now we show that $y+X_{\Phi}(t)-\varphi(t)$ does not assume the value $0$ in the interval $(\cT^y-\varepsilon,\cT^y)$ a.s. Indeed, if for $\omega \in \Omega$ there exists $t<\cT^y$ such that $y+X_{\Phi}(t,\omega)-\varphi(t)=0$, then $\max_{0 \le s \le z}y+X_{\Phi}(s,\omega)-\varphi(s)=0$ for any {$t\leq z < \cT^y$} and, in particular, we can suppose $z \in \mathbb{Q}$. Thus, we have 
	\begin{align*}
		\dP(\exists t < \cT^y: \ y+X_{\Phi}(t)-\varphi(t)=0) 
		&\le \dP(\exists z < \cT^y, \ z \in \mathbb{Q}: \ \max_{0 \le s \le z}y+X_{\Phi}(s)-\varphi(s)=0)\\
		& \le \dP(\exists z \in \mathbb{Q}: \ \max_{0 \le s \le z}y+X_{\Phi}(s)-\varphi(s)=0)\\
		&\le \sum_{z \in \mathbb{Q}}\dP(\max_{0 \le s \le z}y+X_{\Phi}(s)-\varphi(s)=0)=0,
	\end{align*}
	where the latter equality holds since the distribution of $\max_{0\leq s \leq t} y+X_\Phi(s) - \varphi(s)$ does not admit any atom at $0$ for any $t>0$.
	
	Finally, once this has been verified, we have that $\mathds{P}_{0}(\mathcal{T}^{y_n} \le t) \to \mathds{P}_{0}(\mathcal{T}^{y} \le t)$ for all $t>0$ by \cite[Theorem 13.6.4]{whittbook}.
	
	It is worth noticing that actually $\mathds{P}_{0}(\mathcal{T}^{y_n} \le t) \to \mathds{P}_{0}(\mathcal{T}^{y} \le t)$ uniformly, see Appendix \ref{AppUnifConv}.

	Now we are ready to show that $q_\Phi$ is continuous on $\R^+ \times \R \times (-\infty,\varphi(0))$. To do this, fix $(t,x,y) \in \R^+ \times \R \times (-\infty,\varphi(0))$ and let $\{(t_n,x_n,y_n)\}_{n \in \N} \subset \R^+ \times \R \times (-\infty,\varphi(0))$ be such that $(t_n,x_n,y_n) \to (t,x,y)$. Let us first observe that if $x>\varphi(t)$, then we can consider $\varepsilon>0$ such that $x-\varphi(t)>\varepsilon$. Without loss of generality, we can assume that $x_n-\varphi(t)>\frac{\varepsilon}{2}$ and, by continuity of $\varphi$, $\varphi(t_n)-\varphi(t)<\frac{\varepsilon}{4}$, so that
	
	\begin{equation*}
	x_n-\varphi(t_n)=x_n-\varphi(t)-(\varphi(t_n)-\varphi(t))>\frac{\varepsilon}{4}.
	\end{equation*}
	As a consequence, $q_\Phi(t_n,x_n;y_n)=0=q_\Phi(t,x;y)$ for all $n \in \N$. Next, assume that $x<\varphi(t)$. Then, it is sufficient to show that $r_\Phi$ is continuous in $(t,x,y)$. To do this, notice that
	\begin{multline*}
		|r_\Phi(t_n,x_n;y_n)-r_\Phi(t,x;y)|
		 \le |r_\Phi(t_n,x_n;y_n)-r_\Phi(t,x;y_n)|+|r_\Phi(t,x;y_n)-r_\Phi(t,x;y)|=S^1_n+S^2_n.
	\end{multline*}
	We have already shown that $\lim_{n \to \infty}S^2_n=0$. To prove that $\lim_{n \to \infty}S^1_n=0$, let us first consider the case in which $t_n \uparrow t$. Then
	\begin{align*}
		S^1_n &\le \int_0^{t_n}|p_\Phi(t_n-w,x_n;\varphi(w))-p_\Phi(t-w,x;\varphi(w))|\dP_{y_n}(\mathcal{T} \in dw)\\
		&+\int_{t_n}^tp_\Phi(t-w,x;\varphi(w))\dP_{y_n}(\mathcal{T} \in dw)=I^1_n+I^2_n.
	\end{align*}
	Since $x<\varphi(t)$, let $\varphi(t)-x>\varepsilon$ we can assume, without loss of generality, that $\varphi(t)-x_n>\frac{\varepsilon}{2}$ and, by continuity of $\varphi$, $\varphi(t)-\varphi(t_n)<\frac{\varepsilon}{4}$. Then
	\begin{equation*}
		\varphi(t_n)-x_n=\varphi(t)-x_n-(\varphi(t)-\varphi(t_n))>\frac{\varepsilon}{4}>0.
	\end{equation*}
	As a consequence, there exists a compact $K\subset \R^+_0 \times \R^2$ such that all the curves $w \in [0,t_n] \mapsto (t_n-w,x_n,\varphi(w)) \in \R^+_0 \times \R^2$ are contained in $\mathring{K}$ and $(0,0,0) \not \in K$. Since $p_\Phi$ is continuous in $K$, it is also uniformly continuous, hence there exists a non-decreasing modulus of continuity $\varpi:\R_0^+ \to \R_0^+$ such that $\varpi(0)=\lim_{r \downarrow 0}\varpi(r)=0$ and
	\begin{equation*}
		|p_\Phi(t_n-w,x_n;\varphi(w))-p_\Phi(t-w,x;\varphi(w))| \le \varpi(|t_n-t|+|x_n-x|).
	\end{equation*}
	Hence
	\begin{equation*}
		I^1_n \le \varpi(|t_n-t|+|x_n-x|)\dP_{y_n}(\mathcal{T} \le t_n) \le \varpi(|t_n-t|+|x_n-x|) \to 0.
	\end{equation*}
	For $I^2_n$, notice that the curve $w \in [0,t] \mapsto (t-w,x;\varphi(w)) \in \R^+_0 \times \R^2$ lies in the interior of a compact set $K \subset \R^+_0 \times \R^2$ such that \textcolor{black}{$(0,0,0) \not \in K$}. Hence
	\begin{equation*}
		I^2_n \le \max_{(s,\xi;\eta) \in K}p_\Phi(s,\xi;\eta)(\dP_{y_n}(\mathcal{T} \le t)-\dP_{y_n}(\mathcal{T} \le t_n)).
	\end{equation*}
	Since we have that $\dP_{y_n}(\mathcal{T} \le \cdot)$ converge uniformly to $\dP_{y}(\mathcal{T} \le \cdot)$, it is clear that $\lim_{n \to \infty}I^2_n=0$. The same argument holds if $t_n \downarrow t$. Notice that this is enough to prove that $\lim_{n \to \infty}S^1_n=0$. Indeed, let $\{(t_{n_k},x_{n_k};y_{n_k})\}_{k \in \N}$ be the subsequence such that $\lim_{k \to \infty}S^1_{n_k}=\limsup_{n \to \infty}S^1_n$. Then we can consider a further, non-relabelled, subsequence $\{(t_{n_k},x_{n_k};y_{n_k})\}_{k \in \N}$ such that $t_{n_k}$ converges monotonically towards $t$. Then, by the previous argument, $0=\lim_{k \to \infty}S^1_{n_k}=\limsup_{n \to \infty}S^1_n$.
	
	Now let $x=\varphi(t)$. If $x_n=\varphi(t_n)$, then we already know that $q_\Phi(t_n,\varphi(t_n);y_n)=0=q_\Phi(t,\varphi(t);y)$ and then $\lim_{n \to \infty}r_\Phi(t_n,\varphi(t_n);y_n)=r_\Phi(t,\varphi(t);y)$. In general, we split the absolute value as follows:
	\begin{multline*}
		\left|r_\Phi(t_n,x_n;y_n)-r_\Phi(t,x;y)\right|\le \left|r_\Phi(t_n,x_n;y_n)-r_\Phi(t_n,\varphi(t_n);y_n)\right|\\+\left|r_\Phi(t_n,\varphi(t_n);y_n)-r_\Phi(t,\varphi(t);y)\right|=S^3_n+S^4_n.
	\end{multline*}
	Clearly, $\lim_{n \to \infty}S^4_n=0$. To handle $S^3_n$, notice that, arguing as in Theorem \ref{thm:cont}
	\begin{align*}
		S^3_n
		&\le S_{\mathcal{T}}(I,T)\frac{2^{1-3\varepsilon}(1-\varepsilon)^{1-\varepsilon}e^{-(1-\varepsilon)}}{\varepsilon \sqrt{2\pi}}|x_n-\varphi(t_n)|^{2\varepsilon}\int_0^T U_{-\frac{1}{2}-\varepsilon}(s)\, ds
	\end{align*}
	where $\varepsilon \in \left(0,\frac{1}{2}\right)$, $T>\max\{t_n,t\}$  for all $n \in \N$ and $\{y_n\}_{n \in \N} \subset I \subset (-\infty,\varphi(0))$ for a compact set $I$ and $U_p$ is defined in \eqref{Up}. Taking the limits as $n \to \infty$, we finally get $\lim_{n \to \infty}S^3_n=0$.
\end{proof}
\begin{rmk}
	Notice that \eqref{384} is for instance implied by asking that $\varphi$ is bounded. Other conditions that imply the first limit in \eqref{384} can be found by means of the law of the iterated logarithm for $X_\Phi(t)$, as given in \cite[Theorem 4]{magda}. Let us also observe that the condition $\limsup_{\lambda \to \infty}\frac{\Phi(2\lambda)}{\Phi(\lambda)}<2$ is, for instance, implied by asking that $\Phi$ is regularly varying at infinity of order $\alpha \in [0,1)$. Furthermore, it is also equivalent to $\limsup_{\lambda \to \infty}\frac{\lambda \Phi'(\lambda)}{\Phi(\lambda)}<1$ (see \cite[Exercise \btrev{III.7}]{bertoinb}).
\end{rmk}
Concerning the behaviour of $q_\Phi$ as $t \downarrow 0$, we have the following result.
\begin{prop}\label{eq:extendto0}
	Suppose Assumptions \eqref{ass2} and \eqref{orcond}. Assume further that $\varphi$ is non-decreasing and locally Lipschitz. For any fixed $y \in (-\infty,\varphi(0))$ it holds $\lim_{t \downarrow 0}q_\Phi(t,x;y)=0$ locally uniformly with respect to $x \in \R\setminus \{y\}$.	
\end{prop}
\begin{proof}
	Recall that, for $t \le 1$,
	\begin{equation}\label{eq:boundrPhi}
		r_\Phi(t,x;y)=\int_0^t p_\Phi(t-w,x;\varphi(w))\btrev{\mu_\varphi (dw;y)} \le \frac{S_{\mathcal{T}}(\{y\},1)}{\sqrt{2\pi}}\int_0^tU_{-\frac{1}{2}}(w)\, dw
	\end{equation}
where $U_p$ is defined in \eqref{Up}.	Hence $\lim_{t \to 0}r_\Phi(t,x;y)=0$ uniformly in $x$. Recall, however, that we have $\lim_{t \to 0}p_\Phi(t,x;y)=0$ locally uniformly with respect to $x \in \R \setminus \{y\}$, as shown in Proposition \ref{prop:regx}.
\end{proof}
\begin{rmk}
	Actually, with the same proof, we have $\lim_{t \to 0}q_\Phi(t,x;y)=0$ locally uniformly with respect to $(x,y) \in (\R\times (-\infty,\varphi(0)))\setminus {\sf diag}(\R^2)$.
\end{rmk}
The latter allows us to define, for $x \not = y$, $q_\Phi(0,x;y)=0$. In next proposition we study the regularity in the $x$ variable of the sub-probability density $q_\Phi(\cdot,\cdot;y)$.
\begin{prop}\label{prop:derqphix}
	Suppose Assumptions \eqref{ass2} and \eqref{ass4} hold. Let $\varphi: \R_0^+ \to \R$ be continuous, non-decreasing and such that if there exists $0 \le t_1<t_2<\infty$ such that $\varphi(t_1)=\varphi(t_2)$ then $\varphi(t_1)=\varphi(t)$ for all $t \ge t_1$. Then $\partial_x q_\Phi(t,x;y)$ is well-defined and continuous in any $(t,x)$ such that $x \not = \varphi(t),y$ and $t>0$ and, for $x<\varphi(t)$ with $x \not = y$, and $y < \varphi(0)$,
	\begin{equation*}
		\partial_x q_\Phi(t,x;y)=\partial_x p_\Phi(t,x;y)-\int_0^t \partial_x p_\Phi(t-w,x;\varphi(w)) \btrev{\mu_\varphi (dw;y)}.
	\end{equation*}
	 If furthermore Assumption \eqref{orcond} holds, then $\partial_x^2 q_\Phi(t,x;y)$ is well-defined and continuous in any $(t,x)$ such that $x<\varphi(0)$ with $x \not = y$ and $t>0$, and, for $x<\varphi(0)$ with $x \not = y$,
	\begin{equation*}
		\partial^2_x q_\Phi(t,x;y)=\partial^2_x p_\Phi(t,x;y)-\int_0^t \partial^2_x p_\Phi(t-w,x;\varphi(w))\btrev{\mu_\varphi (dw;y)}.
	\end{equation*}
\end{prop}
\begin{proof}
	The case $x>\varphi(t)$ is trivial. We only need to show the statement for $r_\Phi$. To do this, notice that we can construct a compact set $K \subset \R^+_0 \times \R$ such that $w \in [0,t]\mapsto (t-w,x-\varphi(w)) \in \R^+_0 \times \R$ lies in $\mathring{K}$ and $(0,0) \not \in K$. According to Proposition \ref{prop:regx}, $\partial_x p_\Phi(\cdot,\cdot;0)$ is defined in $K \setminus \{(t,0), \ t \ge 0\}$, where $|K \cap \{(t,0), \ t \ge 0\}|=0$, and belongs to $L^\infty(K)$. Hence $\partial_x p_\Phi(t-w,x;\varphi(w))$ is well-defined for all $w \in [0,t]$ such that $x \not = \varphi(w)$. However, notice that if $x \in [\varphi(0),\varphi(t))$ then the preimage $\varphi^{-1}(x)=\{w_x\}$ for some $w_x \in [0,t)$, while if $x < \varphi(0)$, then $\varphi^{-1}(x)=\emptyset$. In any case, $\dP_y(\mathcal{T} \in \varphi^{-1}(x))=0$ since $\mathcal{T}$ is a continuous random variable and then we can state that $\partial_x p_\Phi(t-w,x;\varphi(w))$ is well-defined for $\btrev{\mu_\varphi (dw;y)}$-a.a. $w \in [0,t]$. Furthermore, for fixed $w \in [0,t]$ such that $x \not = \varphi(w)$, the function $\partial_x p_\Phi(t-w,\cdot;\varphi(w))$ is continuous in a neighbourhood of $x$ and, according to the previous argument, it is bounded by $\sup_{(t,x) \in K}|\partial_x p_\Phi(t,x;0)|$, which is independent of $w$. Hence, by a simple application of the dominated convergence theorem, we have
	\begin{equation}\label{eq:derxr}
		\partial_x r_\Phi(t,x;y)=\int_0^t \partial_x p_\Phi(t-w,x;\varphi(w))\btrev{\mu_\varphi (dw;y)}.
	\end{equation}
	Continuity follows by exactly the same argument. Under Assumptions \eqref{orcond} and \eqref{ass4} the same argument holds for the second derivative when $x<\varphi(0)$, \textcolor{black}{getting, in particular,
	\begin{equation}\label{eq:derx2r}
		\partial_x^2 r_\Phi(t,x;y)=\int_0^t \partial_x^2 p_\Phi(t-w,x;\varphi(w))\btrev{\mu_\varphi (dw;y)}.
	\end{equation}}
\end{proof}
\textcolor{black}{As a further step, we would like to guarantee that $\mathcal{T}$ admits a density $p_{\mathcal{T}}(\cdot;y)$ that is continuous on $(0,+\infty)$. To do this, we need some further regularity assumptions on $\varphi$. In the following, we say that $\varphi \in C^1[0,T)$ if there exist $\varepsilon>0$ and a function $\widetilde{\varphi} \in C^1(-\varepsilon,T)$ such that $\widetilde{\varphi}(t)=\varphi(t)$ for all $t \in [0,T)$. Now we are ready to prove the following theorem.
	\begin{thm}
		Suppose that Assumptions \eqref{ass2}, \eqref{orcond} and \eqref{ass4} hold. Let $\varphi:\R^+_0 \to \R$ be non-decreasing, locally Lipschitz and assume further that if there exists $t_1<t_2$ such that $\varphi(t_1)=\varphi(t_2)$, then $\varphi(t)=\varphi(t_1)$ for all $t \ge t_1$. Let
		\begin{equation*}
			t_{\sf const}:=\inf\{t>0: \ \varphi(t)=\varphi(t+1)\}, \quad \mbox{ where } \inf\emptyset=+\infty,
		\end{equation*}
		and assume that $\varphi \in C^1[0,t_{\sf const})$ with $\varphi'(t)>0$ for all $t \in [0,t_{\sf const})$. Then for $y<\varphi(0)$ the law $\mu_\varphi(\cdot;y)$ of $\mathcal{T}$ under $\dP_y$ admits a density $p_{\mathcal{T}}(\cdot;y)$ that is continuous in $(0,t_{\sf const})$. 
\end{thm}}
\begin{proof}
	\textcolor{black}{Let $p_{\mathcal{T}}$ be the density of $\mathcal{T}$ as in Theorem \ref{thm:density}. Fix $t \in [0,t_{\sf const})$ and $y<\varphi(0)$. By the reflection principle, we have
		\begin{equation*}
			A_y(t):=\dP_y(\mathcal{T}_{\varphi(t)} \le t)=2\int_{\varphi(t)-y}^{+\infty}p_{\Phi}(t,z;0)\, dz.
		\end{equation*}
		We now prove that $A_y \in C^1(0,+\infty)$ and we evaluate the derivative. To do this, let $h \in \left(-\frac{t}{2},\frac{t}{2}\right)$ and observe that
		\begin{align}
			\begin{split}
				\frac{A_y(t+h)-A_y(t)}{h}&=\frac{2}{h}\left(\int_{\varphi(t+h)-y}^{+\infty}p_\Phi(t+h,z;0)\, dz-\int_{\varphi(t)-y}^{+\infty}p_\Phi(t,z;0)\, dz\right)\\
				&=\frac{2}{h}\int_{\varphi(t+h)-y}^{\varphi(t)-y}(p_\Phi(t+h,z;0)-p_\Phi(t,z;0))\, dz\\
				&\quad +2\int_{\varphi(t)-y}^{+\infty}\frac{p_\Phi(t+h,z;0)-p_\Phi(t,z;0)}{h}\, dz\\
				&\quad + \frac{2}{h}\int_{\varphi(t+h)-y}^{\varphi(t)-y}p_\Phi(t,z;0)\, dz\\
				&=:2(I_1(h)+I_2(h)+I_3(h))
			\end{split}
		\end{align}
		To handle $I_1$, notice that $z \in \left[\varphi\left(\frac{t}{2}\right)-y,\varphi\left(\frac{3t}{2}\right)-y\right]=:K_1$, where $0 \not \in K_1$. Hence
		\begin{equation*}
			\left|I_1(h)\right| \le \left|\frac{\varphi(t+h)-\varphi(t)}{h}\right|\max_{\substack{z \in K_1 \\ \tau \in [t-|h|,t+|h|]}}|p_\Phi(\tau,z;0)-p_\Phi(t,z;0)|.
		\end{equation*}
		Since $p_\Phi(\cdot,\cdot;0)$ is continuous on $\R^+ \times \R$, and thus uniformly continuous in compact subsets, and $\varphi$ is differentiable in $t$, it holds
		\begin{equation*}
			\lim_{h \to 0}I_1(h)=0.
		\end{equation*}
		Now we need to move to $I_2(h)$. Let us provide a stricter control on $\partial_t p_\Phi$. We have, for $\tau \in [t-|h|,t+|h|]$ and $z \ge \varphi(t)-y>0$.
		\begin{equation}\label{eq:partialtpphi2}
			|\partial_t p_\Phi(\tau,z;0)| \le \int_0^{+\infty} p(w,z;0)|\partial_t f_L(w,\tau)|dw.
		\end{equation}
		Now let $a \in (0,M_\gamma)$. We observe that, by \cite[Proposition 3.1, Item (4)]{bernsteingamma},
		\begin{equation*}
			\lim_{b \to \pm \infty}\frac{|\Phi(a+ib)|}{\sqrt{a^2+b^2}}=0,
		\end{equation*}
		while $b \in \R \mapsto \Phi(a+ib) \in \mathbb{C}$ is a continuous function. Hence, we can take
		\begin{equation*}
			C_1=\sup_{|b| \ge M_\gamma}\frac{|\Phi(a+ib)|}{\sqrt{a^2+b^2}}<\infty, \qquad C_2=\max_{|b| \le M_\gamma}|\Phi(a+ib)|.
		\end{equation*}
		Furthermore, by Assumption \eqref{orcond} and Lemma \ref{lemmaintzero}, we know that for $|b|>M_\gamma$ it holds
		\begin{equation*}
			\Re(\Phi(a+ib)) \ge C_\gamma e^{-t_\gamma a}|b|^{2-\gamma}.
		\end{equation*}
		We have, since $a<M_\gamma$ and $\Re(\Phi(a+ib)) \ge 0$, 
		\begin{align*}
			\int_{-\infty}^{+\infty}|\Phi(a+ib)|e^{-w\Re(\Phi(a+ib))}\, db &\le 2\int_{0}^{M_\gamma}|\Phi(a+ib)|e^{-w\Re(\Phi(a+ib))}\, db+2\sqrt{2}C_1\int_{M_\gamma}^{+\infty}be^{-we^{-t_\gamma a}b^{2-\gamma}}\, db\\
			&\le 2C_2M_\gamma+\frac{2\sqrt{2}C_1}{(2-\gamma)(we^{-t\gamma_a})^{\frac{2}{2-\gamma}}}\Gamma\left(\frac{2}{2-\gamma}\right)<\infty.
		\end{align*}
		Hence, by \cite[Proposition 4.1]{tlms2024}, we get that
		\begin{equation*}
			\left|\partial_t f_L(w,\tau)\right| \le \frac{e^{\frac{3}{2}at}}{2\pi}\int_{-\infty}^{+\infty}|\Phi(a+ib)|e^{-w\Re(\Phi(a+ib))}\, db.
		\end{equation*}
		Plugging the latter into \eqref{eq:partialtpphi2} we have
		\begin{align}\label{eq:upperpartialphi}
			\begin{split}
				|\partial_t p_\Phi(\tau,z;0)| &\le \frac{e^{\frac{3}{2}at}}{2\pi}\int_0^{+\infty}\int_{-\infty}^{+\infty}p(w,z;0)|\Phi(a+ib)|e^{-w\Re(\Phi(a+ib))}\, db \, dw\\
				&=\frac{e^{\frac{3}{2}at}}{2\pi}\int_{-\infty}^{+\infty}|\Phi(a+ib)|\left(\int_0^{+\infty}p(w,z;0)e^{-w\Re(\Phi(a+ib))}\, dw \right)\, db\\
				&=\frac{e^{\frac{3}{2}at}}{2\sqrt{2}\pi}\int_{-\infty}^{+\infty}\frac{|\Phi(a+ib)|}{\sqrt{\Re(\Phi(a+ib))}}e^{-z\sqrt{2\Re(\Phi(a+ib))}}\, db,
			\end{split} 
		\end{align}
		where the last equality comes from the explicit evaluation of the Bessel function $K_{\frac{1}{2}}$, whose details are analogous to the ones of the proof of Proposition \ref{prop:regx} in Appendix \ref{Appreg}. Since the prefactor is constant in $z$, let
		\begin{equation*}
			F(z):=\int_{-\infty}^{+\infty}\frac{|\Phi(a+ib)|}{\sqrt{\Re(\Phi(a+ib))}}e^{-z\sqrt{2\Re(\Phi(a+ib))}}\, db.
		\end{equation*} 
		We now observe that, by Fubini's theorem and direct integration,
		\begin{align}\label{eq:controlFz}
			\begin{split}			
				\int_{\varphi(t)-y}^{+\infty}F(z)\, dz&=\int_{-\infty}^{+\infty}\frac{|\Phi(a+ib)|}{\Re(\Phi(a+ib))}e^{-(\varphi(t)-y)\sqrt{2\Re(\Phi(a+ib))}}\, db\\
				&=2\left(\int_{0}^{M_\gamma}\frac{|\Phi(a+ib)|}{\Re(\Phi(a+ib))}e^{-(\varphi(t)-y)\sqrt{2\Re(\Phi(a+ib))}}\, db+\int_{M_\gamma}^{+\infty}\frac{|\Phi(a+ib)|}{\Re(\Phi(a+ib))}e^{-(\varphi(t)-y)\sqrt{2\Re(\Phi(a+ib))}}\, db\right).
			\end{split}
		\end{align}
		For the first integral, let $\widetilde{\theta}=\arctan\left(\frac{M_\gamma}{a}\right)<\frac{\pi}{2}$ and notice that, by \cite[Proposition 3.7]{librobern}, $\Phi$ maps $\overline{\mathbb{C}(\widetilde{\theta})}$ into itself, hence for $z \in \overline{\mathbb{C}(\widetilde{\theta})}$ it holds $|{\sf Arg}(\Phi(z))| \le \widetilde{\theta}$. Since the arctangent is an odd function, we have for $z \in \overline{\mathbb{C}(\widetilde{\theta})}$
		\begin{equation*}
			\arctan\left(\frac{|\Im(\Phi(z))|}{\Re(\Phi(z))}\right) \le \widetilde{\theta} \qquad \mbox{ that implies }
			|\Im(\Phi(z))| \le \tan(\widetilde{\theta})\Re(\Phi(z)).
		\end{equation*}
		Hence, since for $b \in [0,M_\gamma]$ we have $a+ib \in \overline{\mathbb{C}(\widetilde{\theta})}$, we get	\begin{equation*}
			\frac{|\Phi(a+ib)|}{\Re(\Phi(a+ib))} \le \sqrt{\tan^2(\widetilde{\theta})+1}
		\end{equation*}
		and then
		\begin{equation}\label{eq:controlfirstint}
			\int_{0}^{M_\gamma}\frac{|\Phi(a+ib)|}{\Re(\Phi(a+ib))}e^{-(\varphi(t)-y)\sqrt{2\Re(\Phi(a+ib))}}\, db \le M_\gamma\sqrt{\tan^2(\widetilde{\theta})+1}. 
		\end{equation}
		To handle the second integral, we use instead the fact that $|\Phi(a+ib)| \le C_1\sqrt{a^2+b^2} \le \sqrt{2}C_1b$ and $\Re(\Phi(a+ib)) \ge C_\gamma e^{-t_\gamma a}b^{2-\gamma}$ for $b \ge M_\gamma$ to get
		\begin{align}\label{eq:controlsecond}
			\begin{split}
				\int_{M_\gamma}^{+\infty}\frac{|\Phi(a+ib)|}{\Re(\Phi(a+ib))}&e^{-(\varphi(t)-y)\sqrt{2\Re(\Phi(a+ib))}}\, db \le \frac{\sqrt{2}C_1}{C_\gamma}e^{t_\gamma a} \int_{M_\gamma}^{+\infty}b^{\gamma-1}e^{-\sqrt{C_\gamma}(\varphi(t)-y)e^{-\frac{t_\gamma}{2}a}b^{1-\frac{\gamma}{2}}}\, db\\
				&\le \frac{C_1e^{\frac{2t_\gamma}{2-\gamma}a}}{\sqrt{2}C_\gamma(2-\gamma)(C_\gamma(\varphi(t)-y))^{\frac{2\gamma}{2-\gamma}}}\Gamma\left(\frac{2\gamma}{2-\gamma}\right)<\infty.
			\end{split}
		\end{align}
		Plugging \eqref{eq:controlfirstint} and \eqref{eq:controlsecond} into \eqref{eq:controlFz} we have
		\begin{equation*}
			\int_{\varphi(t)-y}^{+\infty}F(z)\, dz<\infty.
		\end{equation*}
		Now since $p_\Phi(\cdot,z;0) \in C_1(0,+\infty)$ when $z>0$, we can use the mean value theorem to get
		\begin{equation*}
			\frac{p_\Phi(t+h,z;0)-p_\Phi(t,z;0)}{h}=\partial_t p_\Phi(\zeta,z;0),
		\end{equation*}
		where $\zeta$ depends on $t,h,z$ and belongs to the interval $\left[\frac{t}{2},\frac{3}{2}t\right]$. Then we have, by \eqref{eq:upperpartialphi}, 
		\begin{equation*}
			\left|\frac{p_\Phi(t+h,z;0)-p_\Phi(t,z;0)}{h}\right|=|\partial_t p_\Phi(\zeta,z;0)| \le \frac{e^{\frac{3}{2}at}}{2\sqrt{2\pi}}F(z), 
		\end{equation*}
		where the right-hand side is independent of $h$ and integrable in $[\varphi(t)-y,+\infty)$. Hence we can use the dominated convergence theorem to achieve
		\begin{equation*}
			\lim_{h \to 0}I_2(h)=\int_{\varphi(t)-y}^{+\infty}\partial_t p_\Phi(t,z;0)\, dz.
		\end{equation*}
		For $I_3$, it is clear that
		\begin{equation*}
			\lim_{h \to 0}I_3(h)=\varphi'(t)p_\Phi(t,\varphi(t);0).
		\end{equation*}
		This finally proves that
		\begin{equation*}
			\partial_t A_y(t)=-2\varphi'(t)p_\Phi(t,\varphi(t)-y;0)+2\int_{\varphi(t)-y}^{+\infty}\partial_t p_\Phi(t,z;0)\, dz.
		\end{equation*}
		Moreover, notice that the first summand is clearly continuous in $t$. For the second summand, if we fix $t>0$ and we let $h \in \left(-\frac{t}{2},\frac{t}{2}\right)$, then we can split
		\begin{multline*}
			\int_{\varphi(t+h)-y}^{+\infty}\partial_t p_\Phi(t+h,z;0)\, dz-\int_{\varphi(t)-y}^{+\infty}\partial_t p_\Phi(t,z;0)\, dz\\
			=\int_{\varphi(t+h)-y}^{\varphi(t)-y}\partial_t p_\Phi(t+h,z;0)\, dz+\int_{\varphi(t)-y}^{+\infty}(\partial_t p_\Phi(t+h,z;0)-\partial_t p_\Phi(t,z;0))\, dz.
		\end{multline*}
		For the first integral, it is sufficient to notice that $\partial_t p_\Phi(\cdot,\cdot;0)$ is continuous in $\left[\frac{t}{2},\frac{3t}{2}\right]$ and $z \in K_1$ to get
		\begin{equation*}
			\left|\int_{\varphi(t+h)-y}^{\varphi(t)-y}\partial_t p_\Phi(t+h,z;0)\, dz\right| \le \sup_{\substack{z \in K_1 \\ \tau \in \left[\frac{t}{2},\frac{3t}{2}\right]}}\left|\partial_t p_\Phi(\tau,z;0)\right||\varphi(t+h)-\varphi(t)| \to 0
		\end{equation*} 
		as $h \to 0$. On the other hand, for the second integral, we notice that
		\begin{equation*}
			|\partial_t p_\Phi(t+h,z;0)-\partial_t p_\Phi(t,z;0)| \le \frac{e^{\frac{3}{2}at}}{\sqrt{2\pi}}F(z)
		\end{equation*}
		where the right-hand side is integrable in $[\varphi(t)-y,+\infty)$. Hence, we can use the dominated convergence theorem to state that
		\begin{equation*}
			\lim_{h \to 0}\int_{\varphi(t)-y}^{+\infty}(\partial_t p_\Phi(t+h,z;0)-\partial_tp_\Phi(t,z;0))\, dz=0.
		\end{equation*}
		This shows that $\partial_t A_y \in C(0,t_{\sf const})$. }
	
	\textcolor{black}{Now let us notice that if $\mathcal{T}_{\varphi(t)} \le t$, then $\mathcal{T} \le \mathcal{T}_{\varphi(t)}$, i.e. the process must have already hit the boundary. Then it must hit the fixed level $\varphi(t)$ in the remaining time $t-\mathcal{T}$. Recalling that we have shown in Lemma \ref{lem:cTM} that $X_\Phi$ satisfies the Markov property in $\mathcal{T}$, the latter can be expressed as
		\begin{equation}\label{eq:prevolterra}
			A_y(t)=\E_y[\dP_{X_{\mathcal{T}}}(\mathcal{T}_{\varphi(t)}\le t-s)1_{(0,t)}(\mathcal{T})]=\int_0^t \dP_{\varphi(s)}(\mathcal{T}_{\varphi(t)} \le t-s)p_\mathcal{T}(s;y)\, ds.
		\end{equation}
		Let us denote by
		\begin{equation*}
			K(t,s)=\dP_{\varphi(s)}(\mathcal{T}_{\varphi(t)} \le t-s)=2\int_{\varphi(t)-\varphi(s)}^{+\infty}p_\Phi(t-s,z;0)\,dz
		\end{equation*}
		where the second equality follows again by the reflection principle. Now we would like to take the derivative on both sides of \eqref{eq:prevolterra}. To do this, we first observed that $\partial_t K(t,s)$ is well-defined and continuous for $t \in [0,t_{\sf const})$ and $s \in [0,t)$. To do this, actually, it is sufficient to apply exactly the same argument we used for $A_y(t)$, with $h \in \left(\frac{s-t}{2},\frac{t-s}{2}\right)$, so that $t+h \in \left[\frac{t-s}{2}, \frac{3t-s}{2}\right]$ and with $ K_2:=\left[\varphi\left(\frac{t-s}{2}\right)-\varphi(s),\varphi\left(\frac{3t-s}{2}\right)-\varphi(s)\right]$ in place of $K_1$. In particular, one gets
			\begin{equation}\label{eq:partialtK}
			\partial_t K(t,s)=-2\varphi'(t)p_{\Phi}(t-s,\varphi(t)-\varphi(s);0)+2\int_{\varphi(t)-\varphi(s)}^{+\infty}\partial_t p_\Phi(t-s,z;0)\, dz.
	\end{equation}
}
	\textcolor{black}{From now on, let us only consider the case in which $t_{\sf const}<+\infty$; when $t_{\sf const}=+\infty$, we can use any $T>0$ in place of $t_{\sf const}$ and proceed analogously, getting the same statement since the choice of $T>0$ is arbitrary. To evaluate the derivative on the right-hand side of \eqref{eq:prevolterra}, let $h \in \left(0,\frac{t_{\sf const}-t}{2}\right)$. We consider the incremental ratio
		\begin{align*}
			\frac{1}{h}&\left(\int_0^{t+h}K(t+h,s)p_\mathcal{T}(s;y)\, ds-\int_0^{t}K(t,s)p_\mathcal{T}(s;y)\, ds\right)\\
			&=\int_0^{t}\frac{K(t+h,s)-K(t,s)}{h}p_\mathcal{T}(s;y)\, ds+\frac{1}{h}\int_{t}^{t+h}(K(t+h,s)-1)p_\mathcal{T}(s;y)\, ds+\frac{1}{h}\int_{t}^{t+h}p_\mathcal{T}(s;y)\, ds\\
			&=J_1(h)+J_2(h)+J_3(h).
		\end{align*}
		Let us first handle $J_1(h)$. By the mean value theorem, we have
		\begin{equation*}
			\frac{K(t+h,s)-K(t,s)}{h}=\partial_t K(\zeta,s)
		\end{equation*}
		for some $\zeta$ depending on $t,h,s$ and belonging to $\left[t,\frac{t+t_{\sf const}}{2}\right]$. The derivative is well-defined for all $s<t$ since $\zeta>t$. Now we need to provide a bound on $\partial_t K(\zeta,s)$. To do this, we use the decomposition provided in \eqref{eq:partialtK}. First, we recall that
		\begin{equation}\label{eq:controlpartK1}
			p_{\Phi}(\zeta-s,\varphi(\zeta)-\varphi(s);0) \le \frac{U_{-\frac{1}{2}}(\zeta-s)}{\sqrt{2\pi}} \le \frac{U_{-\frac{1}{2}}(t-s)}{\sqrt{2\pi}}
		\end{equation}
		where we recall that the function $U_{-\frac{1}{2}}$ is decreasing, while, at the same time, $\varphi'$ is bounded in $\left[t,\frac{t+t_{\sf const}}{2}\right]$ since $\varphi$ is locally Lipschitz. For the second term in \eqref{eq:partialtK}, fix $c_1,c_2>0$ and observe that, by the reflection principle,
		\begin{equation*}
			\dP_0(\mathcal{T}_{c_1} \le c_2)=2\int_{c_1}^{+\infty}p_\Phi(c_2,z;0)\, dz.
		\end{equation*}
		With exactly the same argument we used for $K(t,s)$ and $A_y(t)$, we can take the derivative inside the integral sign, getting
		\begin{equation*}
			p_{\mathcal{T}_{c_1}}(c_2;0)=2\int_{c_1}^{+\infty}\partial_t p_\Phi(c_2,z;0)\, dz.
		\end{equation*}
		Setting $c_1=\varphi(\zeta)-\varphi(s)$ and $c_2=\zeta-s$ we get
		\begin{equation*}
			2\int_{\varphi(\zeta)-\varphi(s)}^{+\infty}\partial_t p_\Phi(\zeta-s,z;0)\, dz=p_{\mathcal{T}_{\varphi(\zeta)-\varphi(s)}}(\zeta-s;0).
		\end{equation*}
		Now recall that $(\mathcal{T}_c)_{c \ge 0}$ is a subordinator with Laplace exponent $\widetilde{\Phi}(\lambda)=\sqrt{2\Phi(\lambda)}$ by Proposition~\ref{prop:subordinatorT}, hence, denoting by $\widetilde{\sigma}$ a subordinator with such a Laplace exponent, we can write
		\begin{equation*}
			2\int_{\varphi(\zeta)-\varphi(s)}^{+\infty}\partial_t p_\Phi(\zeta-s,z;0)\, dz=g_{\widetilde{\sigma}}(\zeta-s,\varphi(\zeta)-\varphi(s)).
		\end{equation*}
		By Assumption \eqref{ass4}, we know that $\Phi$ admits a holomorphic extension on the complex sector $\mathbb{C}(\theta)$ for some $\theta \in \left(\frac{\pi}{2},\pi\right)$ and that such extension is continuous in $\overline{\mathbb{C}(\theta)}$, thus the same holds for $\widetilde{\Phi}$. Next, notice that since $\lim_{z \to \infty}z^{-1}\Phi(z)=0$ uniformly in $\overline{\mathbb{C}(\theta)}$, the same must hold for $\widetilde{\Phi}$. Next, since ${\rm Arg}(\Phi(z))<\pi$, we have ${\rm Arg}(\widetilde{\Phi}(z))<\frac{\pi}{2}$ and thus $\Re(\widetilde{\Phi}(z)) \ge 0$ for all $z \in \overline{\mathbb{C}(\theta)}$. Furthermore, observe that for fixed $a>0$ and $b \in \R$
		\begin{equation*}
			\Re(\widetilde{\Phi}(a+ib))=\sqrt{|\Phi(a+ib)|+\Re(\Phi(a+ib))} \ge \sqrt{2\Re\Phi(a+ib)}.
		\end{equation*}
		For $|b|>M_\gamma$, we have by Assumption \eqref{orcond} and Lemma \ref{lemmaintzero}
		\begin{equation*}
			\Re(\widetilde{\Phi}(a+ib)) \ge \sqrt{2C_\gamma}e^{-\frac{t_\gamma}{2}a}|b|^{1-\frac{\gamma}{2}},
		\end{equation*}
		so it is clear that
		\begin{equation*}
			\lim_{b \to +\infty}\frac{\Re(\widetilde{\Phi}(a+ib))}{\log(b)}=+\infty.
		\end{equation*}
		Finally, we recall that since $\lim_{z \to +\infty}z^{-1}\Phi(z)=0$ uniformly, it holds
		\begin{equation*}
			\lim_{r \to +\infty}\frac{|\Phi(re^{i\theta})|}{r}=0 \qquad \mbox{ that implies } \qquad \lim_{r \to +\infty}\frac{|\widetilde{\Phi}(re^{i\theta})|}{\sqrt{r}}=0.
		\end{equation*}
		Hence, by Proposition \ref{prop:keyhole}, denoting by $C_3$ the constant in \eqref{eq:controlgst}, we have
		\begin{equation*}
			2\int_{\varphi(\zeta)-\varphi(s)}^{+\infty}\partial_t p_\Phi(\zeta-s,z;0)\, dz\le C_3(\zeta-s)\left(1+\frac{1}{(\varphi(\zeta)-\varphi(s))^{\alpha+1}}\right).
		\end{equation*}
		Next, recall that $\varphi$ belongs to $C^1[0,t_{\sf const})$ with positive derivative, hence, by the mean value theorem
		\begin{equation*}
			\varphi(\zeta)-\varphi(s) \ge C_4(\zeta-s), \qquad \mbox{ where }\qquad C_4=\min_{w \in \left[0,\frac{t+t_{\sf const}}{2}\right]}\varphi'(w)
		\end{equation*}
		and then
		\begin{equation}\label{eq:controlpartK2}
			2\int_{\varphi(\zeta)-\varphi(s)}^{+\infty}\partial_t p_\Phi(\zeta-s,z;0)\, dz\le C_3\left(\zeta-s+\frac{1}{C_4^{\alpha+1}(\zeta-s)^{\alpha}}\right) \le C_3\left(\frac{t+t_{\sf const}}{2}+\frac{1}{C_4^{\alpha+1}(t-s)^{\alpha}}\right).
		\end{equation}
		Combining \eqref{eq:controlpartK1} and \eqref{eq:controlpartK2} we get
		\begin{equation*}
			|\partial_t K(\zeta,s)| \le \frac{\sqrt{2}C_5}{\sqrt{\pi}}\frac{U_{-\frac{1}{2}}(t-s)}+C_3\left(\frac{t+t_{\sf const}}{2}+\frac{1}{C_4^{\alpha+1}(t-s)^{\alpha}}\right), \quad \mbox{ where } \quad C_5=\max_{w \in \left[0,\frac{t+t_{\sf const}}{2}\right]}\varphi'(w).
		\end{equation*}
		Hence, recalling also that $p_{\mathcal{T}}(s;y)$ is bounded by \eqref{eq:STIT} since $y<\varphi(0)$, by the dominated convergence theorem we have 
		\begin{equation*}
			\lim_{h \downarrow 0}J_1(h)=\int_0^t \partial_t K(t,s)p_{\mathcal{T}}(s;y)\, ds.
		\end{equation*}
		Next, we need to handle $J_2(h)$. To do this, we use again the reflection principle to observe that
		\begin{equation*}
			K(t+h,s)=2\dP_0(X_\Phi(t+h-s) \ge \varphi(t+h)-\varphi(s)).	
		\end{equation*}
		Denoting by
		\begin{equation*}
			G(x)=\frac{1}{\sqrt{2\pi}}\int_{-\infty}^{x}e^{-\frac{z^2}{2}}\, dz,
		\end{equation*}
		we have, by a simple conditioning argument,
		\begin{align*}
			1-K(t+h,s)=\E_0\left[2G\left(\frac{\varphi(t+h)-\varphi(s)}{\sqrt{L(t+h-s)}}\right)-1\right] \le \E_0\left[2G\left(\frac{\varphi(t+h)-\varphi(t)}{\sqrt{L(h)}}\right)-1\right].
		\end{align*}
		Now let us recall that, as a consequence of, \cite[Proposition III.8]{bertoinb}, together with the fact that $\varphi$ is locally Lipschitz, we have
		\begin{equation*}
			\lim_{h \downarrow 0}\frac{\varphi(t+h)-\varphi(t)}{\sqrt{L(h)}}=0
		\end{equation*}
		almost surely. Thus we have
		\begin{equation*}
			J_2(h) \le \E_0\left[2G\left(\frac{\varphi(t+h)-\varphi(t)}{\sqrt{L(h)}}\right)-1\right]S_{\mathcal{T}}(\{y\};[0,t]) \to 0
		\end{equation*}
		as $h \to 0$. Finally, if $t$ is a right Lebesgue point of $p_{\mathcal{T}}(s;y)$, we get
		\begin{equation*}
			\lim_{h \downarrow 0}J_3(h)=p_\mathcal{T}(t;y).
		\end{equation*}
		Hence, taking the right derivative in \eqref{eq:prevolterra} on the right Lebesgue points of $p_{\mathcal{T}}(\cdot;y)$, we have
		\begin{equation*}
			p_{\mathcal{T}}(t;y)=\partial_t A_y(t)-\int_0^t \partial_t K(t,s)p_{\mathcal{T}}(s;y)\, ds,
		\end{equation*}
		where the right-hand side is, in any case, well-defined for all $t \in [0,t_{\sf const})$. Now let
		\begin{equation*}
			\widetilde{p}_{\mathcal{T}}(t;y)=\partial_t A_y(t)-\int_0^t \partial_t K(t,s)p_{\mathcal{T}}(s;y)\, ds
		\end{equation*}
		and notice that $\widetilde{p}_{\mathcal{T}}(t;y)=p_{\mathcal{T}}(t;y)$ for all right Lebesgue points fo $p_{\mathcal{T}}(\cdot;y)$, hence it is a version of the density of $\mathcal{T}$ under $\dP_y$. It remains to show that $\widetilde{p}_{\mathcal{T}}(\cdot;y)$ is continuous in $[0,t_{\sf const})$. We have already shown this property for the first summand. For the second summand, we recall the bound
		\begin{equation*}
			\left|\partial_t K(t,s)\right| \le C_5\frac{U_{-\frac{1}{2}}(t-s)}{\sqrt{2\pi}}+C_3\left(\frac{t+t_{\sf const}}{2}+\frac{1}{C_4^{\alpha+1}(t-s)^\alpha}\right)=:H(t-s).
		\end{equation*}
		To prove that the second summand is continuous, let us first consider the case $h \in \left[0,\frac{t_{\sf const}-t}{2}\right]$ and
		\begin{align*}
			\int_0^{t+h} &\partial_t K(t+h,s)p_{\mathcal{T}}(s;y)\, ds-\int_0^{t} \partial_t K(t,s)p_{\mathcal{T}}(s;y)\, ds\\
			&=\int_t^{t+h} \partial_t K(t+h,s)p_{\mathcal{T}}(s;y)\, ds+\int_{0}^{t}\left(\partial_t K(t+h,s)-\partial_t K(t,s)\right)p_{\mathcal{T}}(s;y)\, ds. 
		\end{align*}
		To handle the first summand, we notice that, with the same argument as before,
		\begin{equation*}
			\left|\partial_t K(t+h,s)\right| \le C_5\frac{U_{-\frac{1}{2}}(t+h-s)}{\sqrt{2\pi}}+C_3\left(\frac{t+t_{\sf const}}{2}+\frac{1}{C_4^{\alpha+1}(t+h-s)^{\alpha}}\right).
		\end{equation*}
		Hence
		\begin{align*}
			\int_t^{t+h} &\left|\partial_t K(t+h,s)\right|p_{\mathcal{T}}(s;y)\, ds \\
			&\le S_{\mathcal{T}}\left(\left[0,\frac{t+t_{\sf const}}{2}\right],\{y\}\right)\int_t^{t+h}\left(C_5\frac{U_{-\frac{1}{2}}(t+h-s)}{\sqrt{2\pi}}+C_3\left(\frac{t+t_{\sf const}}{2}+\frac{1}{C_4^{\alpha+1}(t+h-s)^{\alpha}}\right)\right)\, ds\\
			&=S_{\mathcal{T}}(\left[0,\frac{t+t_{\sf const}}{2}\right],\{y\})\left(\frac{C_5}{\sqrt{2\pi}}\int_{0}^{h}U_{-\frac{1}{2}}(s)\,ds+C_3\frac{t+t_{\sf const}}{2}h+\frac{C_3h^{1-\alpha}}{(1-\alpha)C_4^{\alpha+1}}\right) \to 0,
		\end{align*}
		as $h \downarrow 0$. On the other hand, in the second integral, since $s \in (0,t)$, we have
		\begin{equation*}
			|\partial_t K(t+h,s)-\partial_t K(t,s)| \le 2C_5\frac{U_{-\frac{1}{2}}(t-s)}{\sqrt{2\pi}}+C_3\left(\frac{t+t_{\sf const}}{2}+\frac{1}{C_4^{\alpha+1}(t-s)^{\alpha}}\right),
		\end{equation*}
		that is integrable against $p_{\mathcal{T}}(s;y)$ in $[0,t]$. Thus we can use the dominated convergence theorem to guarantee that
		\begin{equation*}
			\lim_{h \downarrow 0}\int_0^{t}(\partial_t K(t+h,s)-\partial_t K(t,s))p_{\mathcal{T}}(s;y)\, ds=0.
		\end{equation*}
		When $h \in \left[-\eta,0\right]$, where $\eta<\frac{t}{2}$, we have
		\begin{align*}
			\int_0^{t} &\partial_t K(t,s)p_{\mathcal{T}}(s;y)\, ds-\int_0^{t+h} \partial_t K(t+h,s)p_{\mathcal{T}}(s;y)\, ds\\
			&=\int_{t+h}^{t} \partial_t K(t,s)p_{\mathcal{T}}(s;y)\, ds+\int_{0}^{t+h}\left(\partial_t K(t,s)-\partial_t K(t+h,s)\right)p_{\mathcal{T}}(s;y)\, ds. 
		\end{align*}
		This time it is immediate to check that
		\begin{equation*}
			\lim_{h \uparrow 0}\int_{t+h}^{t}\partial_t K(t,s)p_{\mathcal{T}}(s;y)\, ds=0.
		\end{equation*}
		For the second integral, let us split furthermore the terms into
		\begin{align*}
			\int_{0}^{t+h}&\left(\partial_t K(t,s)-\partial_t K(t+h,s)\right)p_{\mathcal{T}}(s;y)\, ds\\
			&=\int_{0}^{t-\eta}\left(\partial_t K(t,s)-\partial_t K(t+h,s)\right)p_{\mathcal{T}}(s;y)\, ds+\int_{t-\eta}^{t+h}\left(\partial_t K(t,s)-\partial_t K(t+h,s)\right)p_{\mathcal{T}}(s;y)\, ds.
		\end{align*}
		For $s \in (0,t-\eta)$, we have
		\begin{equation*}
			|\partial_t K(t+h,s)-\partial_t K(t,s)| \le 2C_5\frac{U_{-\frac{1}{2}}(t-\eta-s)}{\sqrt{2\pi}}+C_3\left(\frac{t+t_{\sf const}}{2}+\frac{1}{C_4^{\alpha+1}(t-\eta-s)^{\alpha}}\right),
		\end{equation*}
		hence we can use the dominated convergence theorem to show that
		\begin{equation*}
			\lim_{h \to 0}\int_{0}^{t-\eta}\left|\partial_t K(t,s)-\partial_t K(t+h,s)\right|p_{\mathcal{T}}(s;y)\, ds=0.
		\end{equation*}
		For the second subpart, we only have a bound as follows:
		\begin{equation*}
			|\partial_t K(t+h,s)-\partial_t K(t,s)|1_{[0,t+h]}(s) \le 2C_5\frac{U_{-\frac{1}{2}}(t+h-s)}{\sqrt{2\pi}}+C_3\left(\frac{t+t_{\sf const}}{2}+\frac{1}{C_4^{\alpha+1}(t+h-s)^{\alpha}}\right)=:H(t+h-s),
		\end{equation*}
		so 
		\begin{equation*}
			\int_{t-\eta}^{t+h}\left|\partial_t K(t,s)-\partial_t K(t+h,s)\right|p_{\mathcal{T}}(s;y)\, ds \le S_{\mathcal{T}}(\{y\};[0,t])\int_{t-\eta}^{t+h}H(t+h-s)\, ds=S_{\mathcal{T}}(\{y\};[0,t])\int_{0}^{h+\eta}H(s)\, ds.
		\end{equation*}
		Hence, taking the limit superior as $h \uparrow 0$ we
		have
		\begin{equation*}
			\limsup_{h \uparrow 0}\left|\int_0^{t+h}\partial_t K(t+h,s)p_{\mathcal{T}}(s;y)\, ds-\int_0^{t}\partial_t K(t,s)p_{\mathcal{T}}(s;y)\, ds\right| \le S_{\mathcal{T}}(\{y\};[0,t])\int_{0}^{\eta}H(s)\, ds.
		\end{equation*}
		However, we can now take the limit as $\eta \downarrow 0$ on the right-hand side, finally obtaining
		\begin{equation*}
			\lim_{h \uparrow 0}\left|\int_0^{t+h}\partial_t K(t+h,s)p_{\mathcal{T}}(s;y)\, ds-\int_0^{t}\partial_t K(t,s)p_{\mathcal{T}}(s;y)\, ds\right|=0.
		\end{equation*}
		This proves that $\widetilde{p}_{\mathcal{T}}(\cdot;y)$ is continuous in $[0,t_{\sf const})$.
	}
\end{proof}
\textcolor{black}{\begin{rmk}
	As a side effect of the previous theorem, we actually proved that the continuous version $p_{\mathcal{T}}$ of the first passage time density solves a Volterra equation
	\begin{equation*}
		p_{\mathcal{T}}(t;y)=\partial_t A_y(t)-\int_0^{t}\partial_t K(t,s)p_{\mathcal{T}}(s;y)\, ds.
	\end{equation*}
	These kind of equations were already well-known in the context of Gauss-Markov processes, see for instance \cite{dinardo}, and have already been employed for the study of density of time-changed processes in \cite{leonenko}, for the crossing time of a time-changed Brownian motion and a threshold subject to the same time-change.
\end{rmk}}
\textcolor{black}{From now on, $p_{\mathcal{T}}$ will always denote the continuous version of the density of $\mathcal{T}$ if it exists.}
\textcolor{black}{To handle the case in which $x \ge \varphi(0)$, we need the additional $C^1$ condition on $\varphi$. 
\begin{prop}\label{prop:der2qphi}
	Suppose that Assumptions \ref{ass2}, \ref{orcond} and \ref{ass4} hold. Let $\varphi:\R^+_0 \to \R$ be non-decreasing, locally Lipschitz and assume further that if there exists $t_1<t_2$ such that $\varphi(t_1)=\varphi(t_2)$, then $\varphi(t)=\varphi(t_1)$ for all $t \ge t_1$. Let
	\begin{equation*}
		t_{\sf const}:=\inf\{t>0: \ \varphi(t)=\varphi(t+1)\}, \quad \mbox{ where } \inf\emptyset=+\infty,
	\end{equation*}
	and assume that $\varphi \in C^1[0,t_{\sf const})$ with $\varphi'(t)>0$ for all $t \in [0,t_{\sf const})$. Then for any $t>0$, $y<\varphi(0)$ and $x \in (\varphi(0),\varphi(t))$ it holds
	\begin{equation*}
		\partial_x^2 q_\Phi(t,x;y)=\partial_x^2 p_\Phi(t,x;y)-\int_0^t \partial_x^2p_\Phi(t-w,x;\varphi(w))p_{\mathcal{T}}(w;y)\, dw+\frac{2\overline{\nu}(t-\varphi^{-1}(x))}{\varphi'(\varphi^{-1}(x))}p_{\mathcal{T}}(\varphi^{-1}(x);y).
	\end{equation*}
\end{prop}
\begin{proof}
	Let us define
	\begin{equation*}
		\widetilde{p}_\Phi(t,x;y)=\partial_xp_\Phi(t,x;y)+2\overline{\nu}(t)1_{\R^+_0}(x-y)
	\end{equation*}
	to compensate the jump discontinuity of $\partial_x p_\Phi$. Then we have, from \eqref{eq:derxr}
	\begin{align*}
	\partial_x r_\Phi(t,x;y)&=\int_0^t \widetilde{p}_\Phi(t-w,x;\varphi(w))p_{\mathcal{T}}(w;y)\, dw-2\int_0^t \overline{\nu}(t-w)1_{\R^+_0}(x-\varphi(w))p_{\mathcal{T}}(w;y)\, dw\\
	&=:r_1(t,x;y)-r_2(t,x;y).
	\end{align*}
	Since $\widetilde{p}_\Phi(t,\cdot;y) \in C^1(\R)$, the same argument as in Proposition \ref{prop:derqphix} shows that
	\begin{equation*}
		\partial_x r_1(t,x;y)=\int_0^t \partial_x^2p_\Phi(t-w,x;\varphi(w))p_{\mathcal{T}}(w;y)\, dw.
	\end{equation*}
	Concerning $r_2(r,x;y)$, recall that $\varphi$ is strictly increasing and continuous up to $t_{\sf const}$ so that
	\begin{equation*}
		r_2(t,x;y)=2\int_0^{\varphi^{-1}(x)} \overline{\nu}(t-w)p_{\mathcal{T}}(w;y)\, dw
	\end{equation*}
	and taking the derivative we get
	\begin{equation*}
		\partial_x r_2(t,x;y)=\frac{2\overline{\nu}(t-\varphi^{-1}(x))}{\varphi'(\varphi^{-1}(x))}p_{\mathcal{T}}(\varphi^{-1}(x);y).
	\end{equation*}
	This ends the proof.
\end{proof}}
Next, we investigate the action of the nonlocal operator $\partial_t^\Phi$ on $q_\Phi$.
\begin{prop}\label{prop:timeder}
	Suppose Assumption \eqref{ass2}, \eqref{orcond} and \eqref{ass4} hold. Let $\varphi$ be non-decreasing, locally Lipschitz \textcolor{black}{ and such that if there exists $0 \le t_1<t_2<\infty$ such that $\varphi(t_1)=\varphi(t_2)$ then $\varphi(t_1)=\varphi(t)$ for all $t \ge t_1$.} Then, for $t>0$ and \textcolor{black}{$x,y \in \R$ with $x<\varphi(t)$ } it holds
	\textcolor{black}{\begin{equation*}
		D_t^\Phi q_\Phi(t,x;y)=D_t^\Phi p_\Phi(t,x;y)-\int_0^{t}D_t^\Phi p_\Phi(t-w,x;\varphi(w))p_{\mathcal{T}}(w;y)\, dw.
	\end{equation*}}
\end{prop}
\begin{proof}
	Define
	\begin{equation}\label{eq:kPhi}
		k_\Phi(t,w,x):=1_{\R^+}(t-w)\mathcal{I}^\Phi_t p_\Phi(t-w,x;\varphi(w)).
	\end{equation}
	Notice that
	\begin{align*}
		\mathcal{I}^\Phi_t r_\Phi(t,x;y)\, ds&=\int_0^t \int_0^{t-s}\overline{\nu}(s)p_\Phi(t-s-w,x;\varphi(w))p_{\mathcal{T}}(w;y)\, dw \, ds\\
		&=\int_0^t \int_0^{t-w}\overline{\nu}(s)p_\Phi(t-s-w,x;\varphi(w))p_{\mathcal{T}}(w;y)\, ds \, dw\\
		&=\int_0^{+\infty} 1_{\R^+}(t-w)k_\Phi(t,w,x)p_{\mathcal{T}}(w;y)\, dw.
	\end{align*}
	Since $x<\varphi(t)$, then either $x<\varphi(0)$ and then $\varphi^{-1}(x)=\emptyset$ or $x \in [\varphi(0),\varphi(t))$ and then $\varphi^{-1}(x)=\{w_x\}$. For $w \not = w_x$ and $w \not = t$ we have, for $w<t$,
	\begin{equation}\label{eq:partialtkPhi}
		\partial_t k_\Phi(t,w,x)=D_t^\Phi p_{\Phi}(t-w,x;\varphi(w))=\frac{1}{2}\partial_x^2p_\Phi(t-w,x;\varphi(w)).
	\end{equation}
	Before proceeding, let us provide a further bound on $\partial_x^2p_\Phi$. Consider two compact sets $K_1,K_2 \subset \R$ with $K_1 \cap K_2 =\emptyset$ and let $\varepsilon=\min_{(\xi,\eta) \in K_1 \times K_2}|\xi-\eta|$. Then we have, for $(\xi,\eta) \in K_1 \times K_2$,
	\begin{align}\label{eq:precontrol}
		\begin{split}
		\left|\partial^2_xp_\Phi(t,\xi;\eta)\right|&\le \sqrt{\frac{2}{\pi}}\E\left[(L(t))^{-\frac{3}{2}}|(\xi-\eta)^2(L(t))^{-1}-1|e^{-\frac{(\xi-\eta)^2}{2L(t)}}\right]\\
		&\le \sqrt{\frac{2}{\pi}}\E\left[(\xi-\eta)^2(L(t))^{-\frac{5}{2}}e^{-\frac{(\xi-\eta)^2}{2L(t)}}\right]+\sqrt{\frac{2}{\pi}}\E\left[(L(t))^{-\frac{3}{2}}e^{-\frac{(\xi-\eta)^2}{2L(t)}}\right].	
		\end{split}
	\end{align}
	Now observe that for all $r \ge 0$ and $\lambda>0$ it holds
	\begin{equation*}
		r^{\frac{5}{2}}e^{-\lambda r} \le \left(\frac{5}{2e\lambda }\right)^{\frac{5}{2}} \qquad \mbox{ and } r^{\frac{3}{2}}e^{-\lambda r} \le \left(\frac{3}{2e\lambda }\right)^{\frac{3}{2}}
	\end{equation*}
	hence we get, setting $r=(L(t))^{-1}$ and $\lambda=\frac{(\xi-\eta)^2}{4}$, by \eqref{eq:precontrol},  
	\begin{align}
		\left|\partial^2_xp_\Phi(t,\xi;\eta)\right|&\le \sqrt{\frac{2}{\pi}}\left(\left(\frac{10}{e}\right)^{\frac{5}{2}}+\left(\frac{6}{e}\right)^{\frac{3}{2}}\right)(\xi-\eta)^{-3}\E\left[e^{-\frac{(\xi-\eta)^2}{4L(t)}}\right]\\
		&\le \sqrt{\frac{2}{\pi}}\left(\left(\frac{10}{e}\right)^{\frac{5}{2}}+\left(\frac{6}{e}\right)^{\frac{3}{2}}\right)\varepsilon^{-3}\E\left[e^{-\frac{\varepsilon^2}{4L(t)}}\right].
		\label{estim23}
	\end{align}
	Furthermore, by Theorem \ref{lemmaeqfraz} we know that
	\begin{align*}
		\left|\partial^\Phi_tp_\Phi(t,\xi;\eta)\right|
		&\le \frac{1}{\sqrt{2\pi}}\left(\left(\frac{10}{e}\right)^{\frac{5}{2}}+\left(\frac{6}{e}\right)^{\frac{3}{2}}\right)\varepsilon^{-3}\E\left[e^{-\frac{\varepsilon^2}{4L(t)}}\right].
	\end{align*}
	Going back to \eqref{eq:partialtkPhi}, notice that for $w \to t$ we have that $\varphi(w) \to \varphi(t)>x$. Hence, there exists $\delta>0$ and two compact sets $K_1, K_2 \subset \R$ such that $K_1 \cap K_2=\emptyset$,  $\varphi(w) \in K_1$ for all $w \in [t-\delta,t)$ and $x \in K_2$. Hence, if $\varepsilon=\min_{(\xi,\eta) \in K_1 \times K_2}|\xi-\eta|$, we have
	\begin{equation}\label{eq:partialtkPhi2}
		\left|\partial_t k_\Phi(t,w,x)\right| \le \frac{1}{\sqrt{2\pi}}\left(\left(\frac{10}{e}\right)^{\frac{5}{2}}+\left(\frac{6}{e}\right)^{\frac{3}{2}}\right)\varepsilon^{-3}\E\left[e^{-\frac{\varepsilon^2}{4L(t-w)}}\right]
	\end{equation}
	and then
	\begin{equation*}
		\lim_{w \uparrow t}\left|\partial_t k_\Phi(t,w,x)\right|=0.
	\end{equation*}
	 If $x<\varphi(0)$, exactly the same argument guarantees that $\partial_t k_\Phi(\cdot,\cdot,x)$ is bounded and contiuous and that for any compact set $K \subset (0,+\infty)$ it holds $\sup_{(t,w) \in K \times \R^+}|\partial_t k_\Phi(t,w,x)|<\infty$. Hence, a simple dominated convergence argument, together with the fact that $\lim_{w \to t}k_\Phi(t,w,x)=0$, shows that
	\begin{align}\label{eq:finalequality}
		D_t^\Phi r_\Phi(t,x;y)=\int_0^{+\infty} \partial_tk_\Phi(t,w,x)p_{\mathcal{T}}(w;y)\, dw=\int_0^{t}D_t^\Phi p_\Phi(t-w,x;\varphi(w))p_{\mathcal{T}}(w;y)\, dw.
	\end{align}
	Now let us handle the case $x \in [\varphi(0),\varphi(t))$. In this case, the function $\partial_t k_\Phi(t,\cdot,x)$ is defined in $w_x$ as
	\begin{equation}\label{eq:rhsbound}
	\partial_t k_\Phi(t,w_x,x)=-\frac{1}{\sqrt{2\pi}}\int_0^{+\infty}s^{-\frac{1}{2}}\partial_sf_L(s,t-w_x)\, ds.
	\end{equation}
	Furthermore, by Assumption \eqref{ass4}, we know that the right-hand side of \eqref{eq:rhsbound} is locally bounded with respect to $t$. As a consequence we notice that for any compact set $K \subset (0,+\infty)$ it holds $\sup_{(t,w) \in K \times \R^+}|\partial_t k_\Phi(t,w,x)|<\infty$ to obtain again \eqref{eq:finalequality}. Thus, applying the operator $D_t^\Phi$ on $q_\Phi(\cdot,x;y)$ and using \eqref{reprq} and \eqref{eq:finalequality} we get the desired result.




\end{proof}
\begin{rmk}\label{rmkcont2}
	 \textcolor{black}{Notice that since $r_\Phi(0,x;y)=0$, then $D_t^\Phi r_\Phi(t,x;y)=\partial_t^\Phi r_\Phi(t,x;y)$ for all $x,y \in \R$. Furthermore, $D_t^\Phi p_\Phi(\cdot,x;y) \in C(0,+\infty)$ and, for fixed $x \not = y$ we have $\partial_t^\Phi p_{\Phi}(\cdot,x,y) \in C(0,+\infty)$ by Remark \ref{rmkcont}. On the other hand, observe that we have shown that $\partial_t k_\Phi(\cdot,\cdot,x)$ is bounded and continuous in $(0,+\infty)\times (0,+\infty)$ for any $x \in E_2$ and, at the same time,
	 \begin{equation*}
	 	\partial_t^\Phi r_\Phi(t,x;y)=\E_y[\partial_t k_\Phi(t,\mathcal{T},x)].
	 \end{equation*}
	 Hence, for any $t_0>0$, we can use the dominated convergence theorem to guarantee that
	 \begin{equation*}
	 	\lim_{t \to t_0}\partial_t^\Phi r_\Phi(t,x;y)=\lim_{t \to t_0}\E_y[\partial_t k_\Phi(t,\mathcal{T},x)]=\E_y\left[\lim_{t \to t_0}\partial_t k_\Phi(t,\mathcal{T},x)\right]=\E_y[\partial_t k_\Phi(t_0,\mathcal{T},x)]=\partial_t^\Phi r_\Phi(t_0,x;y).
	 \end{equation*}
	 Hence also $\partial_t^\Phi r_\Phi(\cdot,x;y) \in C(0,+\infty)$. As a consequence, we finally get that $D_t^\Phi q_\Phi(\cdot,x;y) \in C(0,+\infty)$ for all $x,y \in \R$ and $\partial_t^\Phi q_\Phi(\cdot,x;y) \in C(0,+\infty)$ when $x \not = y$.}
\end{rmk}

\section{Main result}
\label{sec:mainres}
We are now ready to state and prove our main result which will imply, as a particular case, the existence result in Theorem \ref{thm:mainfrac}. Uniqueness will be dealt with separately.
\begin{thm}\label{SPthm}
	Suppose that Assumptions \eqref{ass2}, \eqref{orcond} and \eqref{ass4} hold. Let $\varphi:\R_0^+ \to \R$ be non-decreasing and locally Lipschitz. Assume further that if there exist $t_1 < t_2$ such that $\varphi(t_1)=\varphi(t_2)$, then $\varphi(t)=\varphi(t_1)$ for all $t \ge t_1$. Let
	\begin{equation*}
		t_{\sf const}:=\inf\{t>0: \ \varphi(t)=\varphi(t+1)\}, \quad \mbox{ where } \inf\emptyset=+\infty,
	\end{equation*}
	and assume that $\varphi \in C^1[0,t_{\sf const})$ with $\varphi'(t)>0$ for all $t \in [0,t_{\sf const})$. Assume further that $p_{\mathcal{T}}(\cdot;y)$ is continuous in $(\varphi^{-1}(0),\varphi^{-1}(t_{\sf const}))$. Let also $f \in C_c(-\infty,\varphi(0))$ \textcolor{black}{be Dini-continuous} and consider the time-nonlocal Cauchy-Dirichlet problem
	\begin{equation}\label{eq:nonlocmovb}
		\begin{cases}
			\displaystyle \partial_t^\Phi u(t,x)=\frac{1}{2}\partial_x^2 u(t,x) & t>0, \ x<\varphi(t)\\
			u(t,x)=0 & t \ge 0, \ x \ge \btrev{\varphi(t)}\\
			u(0,x)=f(x) & x<\varphi(0)\\
			\lim_{x \to -\infty}u(t,x)=0 & \mbox{locally uniformly with respect to }t>0.
		\end{cases}
	\end{equation}
	Then 	there exists a function $u:\R_0^+ \times \R \to \R$ such that, setting $E=\{(t,x) \in \R^+ \times \R: \ x<\varphi(t)\}$:
	\begin{enumerate}
		\item $u \in \btrev{C}(\overline{E})$;
		\item For all $t \in E_1$ it holds $u(t,\cdot) \in C^2(E_2(t))$;
		\item For all $x \in E_2$, it holds $\partial_t^\Phi u(\cdot,x) \in C(E_1(x))$;
		\item  For all $x \in E_2\setminus \{\varphi(0)\}$, it holds $\partial_t^\Phi u(\cdot,x) \in L^1_{\rm loc}(\overline{E_1(x)})$;
		\item $u$ satisfies \eqref{eq:nonlocmovb}.
	\end{enumerate} 
	In particular, such a function $u$ is given by
	\begin{equation}\label{eq:usol}
		u(t,x)=\int_{-\infty}^{\varphi(0)}q_\Phi(t,x;y)f(y)\, dy,
	\end{equation}
	where $q_\Phi$ is defined in Theorem \ref{lem:intrep} and we set $u(0,x)=f(x)$.
\end{thm}
In order to prove this theorem, we state and prove the following further result (which is, however of independent interest).
\begin{thm}
	\label{thm52}
Suppose that Assumptions \eqref{ass2}, \eqref{orcond} and \eqref{ass4} hold. Let $\varphi:\R_0^+ \to \R$ be as in Theorem \ref{SPthm}. 
Assume further that $p_{\mathcal{T}}(\cdot;y)$ is continuous in $(\varphi^{-1}(0),\varphi^{-1}(t_{\sf const}))$. Then the function $q_\Phi$ defined in Theorem \ref{lem:intrep} satisfies
\begin{equation}\label{eq:nonlocmovbq}
	\begin{cases}
		\displaystyle \partial_t^\Phi q_\Phi(t,x;y)=\frac{1}{2}\partial_x^2 q_\Phi(t,x;y) & t>0, \ x<\varphi(t), y<\varphi(0), \ x \not = y\\
		q_\Phi(t,x;y)=0 & t \ge 0, \ x \ge \varphi(t) \mbox{ or }y \ge \varphi(0)\\
		\lim_{x \to -\infty}q_\Phi(t,x;y)=0 & \mbox{locally uniformly with respect to }t>0 \mbox{ and } y \in \R\\
		\lim_{t \to 0^+}q_\Phi(t,x;y)=0 & x\not = y\\
		\lim_{t \to 0^+}q_\Phi(t,x;y)dy=\delta_x(dy) & \mbox{vaguely as a measure on $(-\infty,\varphi(0))$.}
	\end{cases}
\end{equation}
\end{thm}
\begin{proof}
First notice that the second equality in \eqref{eq:nonlocmovbq} is verified for $x \ge \varphi(t)$ by Theorem \ref{thm:cont}, while for $y \ge \varphi(0)$ this is implied by the definition of $q_\Phi$. Furthermore, the fourth equality in \eqref{eq:nonlocmovbq} is proved in Proposition \ref{eq:extendto0}. Concerning the last equality in \eqref{eq:nonlocmovbq}, recall that, by Proposition \ref{prop:regx}, $\lim_{t \downarrow 0}p_\Phi(t,x;y)dy=\delta_x(dy)$ weakly. Now let $f \in C_c(-\infty,\varphi(0))$ and observe that, since ${\sf supp}(f) \subset (-\infty,\varphi(0))$,
\begin{equation*}
	\int_{-\infty}^{\varphi(0)}q_{\Phi}(t,x;y)f(y)\, dy= \int_{-\infty}^{+\infty}p_{\Phi}(t,x;y)f(y)\, dy-\int_{{\sf supp}(f)}r_\Phi(t,x;y)f(y)\, dy.
\end{equation*}
Furthermore, observe that, for $t \le 1$, since $\varphi$ is locally Lipschitz, we can use Theorem \ref{thm:density} to obtain
\begin{equation*}
\left| \int_{{\sf supp}(f)}r_\Phi(t,x;y)f(y)\, dy \right| \le \frac{|{\sf supp(f)}|S_{\mathcal{T}}({\sf supp}(f),\btrev{1})\max_{y \in {\sf supp(f)}}|f(y)|}{\sqrt{2\pi}} \int_0^t U_{-\frac{1}{2}}(w)\, dw,
\end{equation*}
\btrev{where $U_p$ is defined in \eqref{Up},} that implies
\begin{equation*}
	\lim_{t \downarrow 0}\int_{{\sf supp}(f)}r_\Phi(t,x;y)f(y)\, dy=0.
\end{equation*}
Hence
\begin{multline*}
	\lim_{t \downarrow 0}\int_{-\infty}^{\varphi(0)}q_{\Phi}(t,x;y)f(y)\, dy= \lim_{t \downarrow 0}\int_{-\infty}^{+\infty}p_{\Phi}(t,x;y)f(y)\, dy-\lim_{t \downarrow 0}\int_{{\sf supp}(f)}r_\Phi(t,x;y)f(y)\, dy=f(x).
\end{multline*}
Since $f \in C_c(-\infty,\varphi(0))$ is arbitrary, this proves the fifth equality in \eqref{eq:nonlocmovbq}.

To prove the third equality in \eqref{eq:nonlocmovbq}, notice that for fixed $T>0$ and $t \in [0,T]$ we have 
\begin{equation*}
	r_\Phi(t,x;y) \le \frac{S_{\mathcal{T}}(\{y\},T)}{\sqrt{2\pi}} \int_0^t \btrev{\E_y}\left[(L(t-w))^{-\frac{1}{2}}e^{-\frac{(x-\varphi(w))^2}{2L(t-w)}}\right]\, dw.
\end{equation*}
Since we want to study the limit as $x \to -\infty$, we can assume, without loss of generality, that $(x-\varphi(0))^2 \le (x-\varphi(w))^2$. Thus it holds
\begin{align*}
	r_\Phi(t,x;y) &\le \frac{S_{\mathcal{T}}(\{y\},T)}{\sqrt{2\pi}} \int_0^t \btrev{\E_y}\left[(L(w))^{-\frac{1}{2}}e^{-\frac{(x-\varphi(0))^2}{2L(w)}}\right]\, dw \\
	&\le \frac{S_{\mathcal{T}}(\{y\},T)}{\sqrt{2\pi}} \int_0^T \btrev{\E_y}\left[(L(w))^{-\frac{1}{2}}e^{-\frac{(x-\varphi(0))^2}{2L(w)}}\right]\, dw.
\end{align*}
Taking the supremum over $t \in [0,T]$ we have
\begin{equation*}
	\sup_{t \in [0,T]}r_\Phi(t,x;y) \le \frac{S_{\mathcal{T}}(\{y\},T)}{\sqrt{2\pi}} \int_0^T \btrev{\E_y}\left[(L(w))^{-\frac{1}{2}}e^{-\frac{(x-\varphi(0))^2}{2L(w)}}\right]\, dw.
\end{equation*}
Now, notice that the integrand can be controlled \btrev{(for any $y$, independently on $y$)} as 
\begin{equation*}
	\btrev{\E_y}\left[(L(w))^{-\frac{1}{2}}e^{-\frac{(x-\varphi(0))^2}{2L(w)}}\right] \le U_{-\frac{1}{2}}(w),
\end{equation*}
 and thus the right-hand side belongs to $L^1[0,T]$, so that we can use the dominated convergence theorem to conclude that
	\begin{equation}\label{eq:limitrPhi}
		\lim_{x \to -\infty}\sup_{\btrev{y \in K}}\sup_{t \in [0,T]}r_\Phi(t,x;y)=0.
	\end{equation}
	On the other hand, observing that for $r \ge 0$ and $\lambda>0$
	\begin{equation*}
		\sqrt{r}e^{-\lambda r} \le \sqrt{\frac{2}{\lambda e}},
	\end{equation*}
	we know that, for $x \not = y$,
	\begin{equation*}
		\sup_{t \in [0,T]}p_\Phi(t,x;y) \le \frac{1}{|x-y|} \btrev{\sqrt{\frac{2}{e \pi}}}  
	\end{equation*}
	and then, for any compact set $K \subset \R$,
	\begin{equation}\label{eq:limitpPhi}
		\lim_{x \to -\infty}\sup_{y \in K}\sup_{t \in [0,T]}p_\Phi(t,x;y)=0. 
	\end{equation}
	Combining \eqref{eq:limitrPhi} and \eqref{eq:limitpPhi} we get
	\begin{equation*}
		\lim_{x \to -\infty}\sup_{y \in K}\sup_{t \in [0,T]}q_\Phi(t,x;y)=0,
	\end{equation*}
	that, since $T>0$ and $K \subset \R$ are arbitrary, implies the third equality in \eqref{eq:nonlocmovbq}.
	
It only remains to show the first equality in \eqref{eq:nonlocmovbq}. This is done as follows: by Proposition \ref{prop:timeder} we know that for all $x<\varphi(t)$ with $x \not = y$ it holds
	\begin{equation}\label{eq:equality1}
		\partial_t^\Phi q_\Phi(t,x;y)=\partial_t^\Phi p_\Phi(t,x;y)-\int_0^t D_t^\Phi p_\Phi(t-w,x;\varphi(w))p_{\mathcal{T}}(w;y)\, dw.
	\end{equation}
	\textcolor{black}{On the second integral, we apply a change of variables to get $z=\varphi(w)$ $w=\varphi^{-1}(z)$
	\begin{equation}\label{eq:equality12}
	\partial_t^\Phi q_\Phi(t,x;y)=\partial_t^\Phi p_\Phi(t,x;y)-\int_{\varphi(0)}^{\varphi(t)} D_t^\Phi p_\Phi(t-\varphi^{-1}(z),x;z)\frac{p_{\mathcal{T}}(\varphi^{-1}(z);y)}{\varphi^\prime(\varphi^{-1}(z))}\, dz.
	\end{equation}
	}
	Hence, by Theorem \ref{lemmaeqfraz} (see, in particular, Remark \ref{adessobasta}), we know that for fixed $(t,x) \in \R^+ \times \R$ with $x<\varphi(t)$,
	\begin{equation}\label{eq:equality2}
		D_t^\Phi p_\Phi(t-\varphi^{-1}(z),x;z)=\frac{1}{2}\partial_x^2p_\Phi(t-\varphi^{-1}(z),x;z)-\overline{\nu}(t-\varphi^{-1}(z))\delta_{\{x\}}(dz)
	\end{equation}
	for all $z \in [\varphi(0),\varphi(t))$. Now we distinguish among two cases. If $x<\varphi(0)$, then $\delta_{\{x\}}([\varphi(0),\varphi(t)])=0$ and thus
	we have, by using \eqref{eq:equality2}, together with Theorem \ref{lemmaeqfraz} again, in \eqref{eq:equality12} to get
	\begin{equation*}
		\partial_t^\Phi q_\Phi(t,x;y)=\frac{1}{2}\left(\partial_x^2 p_\Phi(t,x;y)-\int_0^t \partial_x^2 p_\Phi(t-w,x;\varphi(w))p_{\mathcal{T}}(w;y)\, dw\right)=\frac{1}{2}\partial_x^2q_\Phi(t,x;y),
	\end{equation*}
	where we also used Proposition \ref{prop:derqphix}. If instead $x \in [\varphi(0),\varphi(t))$, then we gain an additional contribution due to the Dirac delta term, leading to
	\begin{align*}
		\partial_t^\Phi q_\Phi(t,x;y)&=\frac{1}{2}\left(\partial_x^2 p_\Phi(t,x;y)-\int_0^t \partial_x^2 p_\Phi(t-w,x;\varphi(w))p_{\mathcal{T}}(w;y)\, dw+\frac{2\overline{\nu}(t-\varphi^{-1}(x))}{\varphi^\prime(\varphi^{-1}(x))}p_{\mathcal{T}}(\varphi^{-1}(x);y)\right)\\
		&=\frac{1}{2}\partial_x^2q_\Phi(t,x;y),
	\end{align*}
	where this time we used Proposition \ref{prop:der2qphi}. Since $t>0$ and $x \in (-\infty,\varphi(t)) \setminus \{y\}$ is arbitrary, this ends the proof.
	\end{proof}	
	\begin{rmk}
		Notice that we also proved that for any $t>0$ and any $x,y \in \R$ it holds
		\begin{equation}\label{eq:eqphir}
			\partial_t^\Phi r_\Phi(t,x;y)=\frac{1}{2} \partial_x^2 r_\Phi(t,x;y).
		\end{equation}
	\end{rmk}
	\begin{proof}[Proof of Theorem \ref{SPthm}]
	Let $u$ be as in \eqref{eq:usol}. Let us first show that $u \in C(\R_0^+ \times \R)$. For $(t,x) \in \R^+ \times \R$, \btrev{we recall that, by Theorem \ref{thm:cont},} $q_\Phi(\cdot,\cdot;y) \in C(\R^+ \times \R)$. Furthermore, consider any $t_0 \in (0,t)$ and notice that
	\begin{equation*}
		\sup_{(\tau,\xi) \in [t_0,+\infty) \times \R}|f(y)q_\Phi(\tau,\xi;y)| \le \frac{\Norm{f}{L^\infty(\R)}}{\sqrt{2\pi t_0}}\mathbf{1}_{{\sf supp}(f)}(y),
	\end{equation*}
	hence the continuity of $u$ in $(t,x)$ follows by a simple application of the dominated convergence theorem. Concerning points of the form $(0,x)$ for $x \in \R$, let us split the solution as follows:
	\begin{equation}\label{eq:solusplit}
		u(t,x)=\widetilde{u}(t,x)+u_r(t,x),
	\end{equation}
	where
	\begin{equation*}
		\widetilde{u}(t,x)=\E[f(X_\Phi(t)+x)] \mbox{ and } u_r(t,x)=-\int_{\R}r_\Phi(t,x;y)f(y)\, dy,
	\end{equation*}
 \btrev{where we are using again the notation $\E:=\E_0$.}
	Concerning $\widetilde{u}(t,x)$, by dominated convergence it is clear that
	\begin{equation*}
	\lim_{(s,\xi) \to (0,x)}\widetilde{u}\btrev{(s,\xi)}=\E[f(x)]=u(0,x).
	\end{equation*}
	On the other hand, arguing as in \btrev{in the proof of Theorem \ref{thm52}}, we have, for $s \in (0,1]$ and $\xi \in \R$,
	\begin{equation*}
		|u_r(s,\xi)| \le  \frac{\btrev{(\max_{y \in {\sf supp}(f)} f(y))} \, |{\sf supp}(f)| \mathcal{S}_{\mathcal{T}}({\sf supp}(f),1)}{\sqrt{2\pi}}\int_0^s U_{-\frac{1}{2}}(w)\, dw
	\end{equation*}
	where $U_p$ is defined in \eqref{Up},	and then
	\begin{equation*}
		\lim_{(s,\xi) \to (0,x)}|u_r(s,\xi)|=0.
	\end{equation*}
	This shows that $\lim_{(s,\xi) \to (0,x)}u(s,\xi)=u(0,x)$ and then $u \in C(\R_0^+ \times \R)$ (and then, in particular, belongs to $C(\overline{E})$). \\
	Next, fix $t>0$ and $x \in \R \setminus \{\varphi(t)\}$ and observe that, arguing as in Proposition \ref{prop:derqphix}, there exists a compact set $K \subset \R_0^+ \times \R$ such that the curve $w \in [0,t] \mapsto (t-w,x-\varphi(w)) \in \R_0^+ \times \R$ lies in $\mathring{K}$ and $(0,0) \not \in K$. Furthermore, there exists a $\delta>0$ such that for $\xi \in [x-\delta,x+\delta]=:I_\delta$ the curves $w \in [0,t] \mapsto (t-w,\xi-\varphi(w)) \in \R_0^+ \times \R$ lie into $\mathring{K}$. Hence, we have that, for $\xi \in I_\delta$,
	\begin{equation*}
		\Norm{\partial_x r_\Phi(t,\cdot;y)}{L^\infty[x-\delta,x+\delta]} \le \Norm{\partial_x p_\Phi(\cdot,\cdot;0)}{L^\infty(K)},
	\end{equation*} 
	where the right-hand side is independent of $y$. Furthermore, for $y \in {\sf supp}(f)$ and $\xi \in I_\delta$ it is clear that $\xi-y \in I_\delta-{\sf supp}(f)$, where the latter is the Minkowski sum of two compact subsets of $\R$ and thus it is compact. As a consequence
	\begin{equation*}
		\Norm{\partial_x q_\Phi(t,\cdot;y)}{L^\infty(I_\delta)} \le \Norm{\partial_x p_\Phi(t,\cdot;0)}{L^\infty(I_\delta-{\sf supp}(f))}+\Norm{\partial_x p_\Phi(\cdot,\cdot;0)}{L^\infty(K)}.
	\end{equation*}
	Hence, a simple application of the dominated converence theorem guarantees that for $x \not = \varphi(t)$
	\begin{equation*}
		\partial_x u(t,x)=\int_{{\sf supp}(f)}\partial_x q_\Phi(t,x;y)f(y)\, dy
	\end{equation*}
	and $\partial_x u(t,\cdot) \in C(\R \setminus \{\varphi(t)\})$.

	\textcolor{black}{Notice, in particular, that the same argument shows that
	\begin{equation}\label{eq:derj}
		\partial_x u_r(t,x)=-\int_{{\sf supp}(f)}\partial_x r_\Phi(t,x;y)f(y)\, dy.
	\end{equation}
	To show that $u(t,\cdot) \in C^2(E_2(t))$ for all $t \in E_1$, we now prove that
	\begin{equation}\label{eq:secondderur}
		\partial_x^2 u_r(t,x)=-\int_{{\sf supp}(f)}\partial^2_x r_\Phi(t,x;y)f(y)\, dy
	\end{equation}
	and that $u_r(t,\cdot) \in C^2(E_2(t))$. This will imply, together with Theorem \ref{thm:Dini1}, that $u(t,\cdot) \in C^2(E_2(t))$. Let us distinguish two cases. If $x<\varphi(0)$, then we consider $\delta>0$ such that $x+\delta<\varphi(0)$ and $0<t_1<t<t_2$. Then, for $w \in [0,t]$ we have $t-w \in [0,t_2]$ and then we notice that, for any $t^\prime \in [t_1,t_2]$ and $x^\prime \in [x-\delta,x+\delta]$.
	\begin{equation*}
		\left|\partial_x^2 r_\Phi(t^\prime,x^\prime;y)\right| \le \sup_{\substack{\tau \in [0,t_2] \\ z \in [x-\delta,x+\delta] \\ h \in [\varphi(0),\varphi(t_2)]}}\left|\partial_x^2 p_\Phi(\tau,z,h)\right|<\infty.
	\end{equation*}
	If $x \in (\varphi(0),\varphi(t))$, then consider $\delta>0$ such that $x+\delta<\varphi(t-\delta)$. Since $\varphi \in C^1[0,t)$ and $\varphi'(0)>0$, there exists $\varepsilon>0$ and a function $C^1(-\varepsilon,t)$, that we still denote $\varphi$, has positive derivative on $(-\varepsilon, t)$ and extends $\varphi$. We can then suppose that $x-\delta>\varphi(-\varepsilon)$. Furthermore, let $t_\star=\frac{\varphi^{-1}(x+\delta)+t-\delta}{2}$. We split the integral defining $\partial_x^2 r_\Phi$ as follows:
	\begin{equation*}
		\partial_x^2 r_\Phi(t,x;y)=\int_0^{t_\star}\partial_x^2 p_\Phi(t-w,x;\varphi(w))p_{\mathcal{T}}(w;y)\, dw+\int_{t_\star}^{t}\partial_x^2 p_\Phi(t-w,x;\varphi(w))p_{\mathcal{T}}(w;y)\, dw-\frac{2 \overline{\nu}(t-\varphi^{-1}(x))}{\varphi'(\varphi^{-1}(x))}p_{\mathcal{T}}(\varphi^{-1}(x);y).
	\end{equation*} 
	To control the first integral, notice that as $w \in [0,t_\star]$, then for any $t^\prime \in [t-\delta,t+\delta]$ $t^\prime-w \in [\frac{t-\delta-\varphi^{-1}(x+\delta)}{2},t+\delta]=:I$, where $\frac{t-\delta-\varphi^{-1}(x+\delta)}{2}>0$. Hence we can set, using the fact that $\partial_x^2 p_\Phi \in L^\infty_{\sf loc}(\R^+ \times \R^2)$,
	\begin{equation*}
		M_1:=\sup_{\substack{\tau \in I \\ z \in [x-\delta,x+\delta] \\ h \in [\varphi(0),\varphi(t+\delta)]}}\left|\partial_x^2 p_\Phi(\tau,z;h)\right|<\infty.
	\end{equation*}
	Next, for the second integral, we notice that as $w \in [t_\star,t]$, then $\varphi(w) \in [\varphi(t_\star),\varphi(t)]$. However, since $t_\star>\varphi^{-1}(x+\delta)$, we have that $\varphi(t_\star)>x+\delta$ and thus for all $z \in [x-\delta,x+\delta]$ and $w \in [t_\star,t]$ the point $(z,\varphi(w))$ is separated from the diagonal. Let $K$ be a compact neighbourhood of $\{(z,\varphi(w)): \ z \in [x-\delta,x+\delta], \ w \in [t_\star,t]\}$ such that $K \cap {\sf diag}(\R^2)=\emptyset$. Furthermore, notice that for all $t^\prime \in [t-\delta,t+\delta]$ and $w \in [t_\star,t^\prime]$ it holds $t^\prime-w \in [0,t+\delta]$. Then, by \eqref{eq:locunifpartpphi2}, we have that
	\begin{equation*}
		M_2:=\sup_{\substack{\tau \in [0,t+\delta] \\ (z,h) \in K}}\left|\partial_x^2 p_\Phi(\tau,z;h)\right|<\infty.
	\end{equation*}
	Finally, let
	\begin{equation*}
		\varphi^\prime_\star:=\min_{z \in [x-\delta,x+\delta]}\varphi'(\varphi^{-1}(x))>0
	\end{equation*}
	so that for all $t^\prime \in [t-\delta,t+\delta]$, $z \in [x-\delta,x+\delta]$ and $y \in {\sf supp}(f)$, it holds
	\begin{equation*}
		M_3:=\frac{2\overline{\nu}(t-\delta-\varphi^{-1}(x+\delta))}{\varphi^\prime_\star}S([t-\delta,t+\delta];{\sf supp}(f))<\infty.
	\end{equation*}
	Using the definition of $M_j$ for $j=1,2,3$, we have for all $\tau \in [t-\delta,t+\delta]$, $z \in [x-\delta,x+\delta]$ and $y \in {\sf supp}(f)$
	\begin{equation*}
		\left|\partial_x^2 r_\Phi(\tau,z;y)\right| \le M_1+M_2+M_3.
	\end{equation*}
	With this in mind, \eqref{eq:secondderur} and the fact that $u_r(t,\cdot) \in C^2(E_2(t))$ come from a simple application of the dominated convergence theorem.}
	
	 \textcolor{black}{Next, we want to show that $\partial_t^\Phi u(t,x)$ is well-defined for any $t>0$ and $x \in E_2(t)$. To do this, let us split again $u$ as in \eqref{eq:solusplit}, so that, if the involved quantities exist,
	\begin{equation*}
		\partial_t^\Phi u(t,x)=\partial_t^\Phi \widetilde{u}(t,x)+\partial_t^\Phi u_r(t,x).
	\end{equation*}
	We notice that, by Theorem \ref{thm:Dini1}, $\partial_t^\Phi \widetilde{u} \in C(\R^+ \times \R)$ and $\partial_t^\Phi \widetilde{u}(\cdot,x) \in L^1_{\sf loc}(\R^+_0)$ for $x \in \R$. Hence, we only need to prove that $\partial_t^\Phi u_r(t,x)$ is well-defined for $t>0$ and $x \in E_2(t)$. To do this, notice that, since $u_r(0,\cdot)\equiv 0$, we have
	\begin{equation*}
		\partial_t^\Phi u_r(t,x)=-\partial_t \left(\int_{0}^t \int_{\R}\overline{\nu}(t-s) r_\Phi(s,x;y)f(y)\, dy\, ds\right).
	\end{equation*}
	Let us first show that we can exchange the order of the integrals. Indeed, by \eqref{eq:boundrPhi} we have
	\begin{equation*}
	\int_{0}^t \int_{\R}\overline{\nu}(t-s) r_\Phi(s,x;y)|f(y)|\, dy\, ds	\le |{\sf supp}(f)|\Norm{f}{L^\infty(\R)}\frac{S_{\mathcal{T}}(\{y\},1)}{\sqrt{2\pi}}I_\Phi(t)\int_0^t U_{-\frac{1}{2}}(w)\, dw.
	\end{equation*}
	Hence we can rewrite
	\begin{equation*}
		\partial_t^\Phi u_r(t,x)=-\partial_t \left(\int_{\R}f(y)\left(\int_{0}^t \overline{\nu}(t-s) r_\Phi(s,x;y)\, ds\right)\, dy\right).
	\end{equation*}
	Now we want to prove that we can take the derivative inside the integral sign. To do this, it is sufficient to provide, for fixed $x$, a bound for $|\partial_t^\Phi r_\Phi(\cdot,x;y)|$ in a suitable neighbourhood of any $t>0$ with $\varphi(t)>x$ that is independent of $y \in {\sf supp}(f)$. Fix $y \in {\sf supp}(f)$ and $x<\varphi(t)$ with $x \not = y$. We notice that, by the assumptions on $\varphi$, there exists $\delta>0$ such that $x<\varphi(\tau)$ for all $\tau \in [t-\delta,t+\delta]$ where $t-\delta>0$. Let $k_\Phi$ as in \eqref{eq:kPhi} and recall that
	\begin{equation*}
		\partial_t^\Phi r(t,x;y)=\int_0^{t}\partial_t k_\Phi(t,w,x)p_{\mathcal{T}}(w;y)\, dw.
	\end{equation*}
	Arguing as in Proposition \ref{prop:timeder}, we know that $\partial_t k_\Phi(\cdot,\cdot,x)$ is continuous and \linebreak $\sup_{(\tau,w) \in [t-\delta,t+\delta] \times (0,)}|\partial_t k_\Phi(\tau,w,x)|<\infty$. Hence 
	\begin{equation*}
		\sup_{(\tau,y) \in [t-\delta,t+\delta]\times {\sf supp}(f)}|\partial_t^\Phi r(\btrev{\tau},x;y)| \le \sup_{(\tau,w) \in [t-\delta,t+\delta] \times \R^+}|\partial_t k_\Phi(\tau,w,x)|.
	\end{equation*}
	Hence, by dominated convergence we get
	\begin{equation*}
		\partial_t^\Phi u_r(t,x)=-\int_{\R} f(y)\partial_t^\Phi r_\Phi(t,x;y)\, dy.
	\end{equation*}
	By Remark \ref{rmkcont2}, the same dominated convergence argument proves that $\partial_t^\Phi u_r(\cdot,x) \in C(E_1(x))$. Hence, we have shown that $\partial_t^\Phi u(t,x)$ is well defined for any $t>0$ and $x<\varphi(t)$ and belongs to $C(E_1(x))$ for any $x \in E_2$. 
	}

	Next, we verify that $u$ satisfies \eqref{eq:nonlocmovb}. \textcolor{black}{Indeed, let us split again $u$ as in \eqref{eq:solusplit} and notice that, by Theorem~\ref{thm:Dini1}
		\begin{equation}\label{eq:eqtildeu}
			\partial_t^\Phi \widetilde{u}(t,x)=\frac{1}{2} \partial_x^2 \widetilde{u}(t,x), \ (t,x) \in \R^+ \times \R.
		\end{equation}
	On the other hand, by} \eqref{eq:eqphir}
	\begin{equation}\label{eq:equr}
		\partial_t^\Phi u_r(t,x)=-\int_{\R}\partial_t^\Phi r_\Phi(t,x;y)f(y)\, dy=-\frac{1}{2}\int_{\R}f(y)\partial_x^2 r_\Phi(t,x;y) \, dy=\frac{1}{2}\partial_x^2 u_r(t,x).
	\end{equation}
\btrev{	Combining \eqref{eq:eqtildeu} and \eqref{eq:equr} we have
	\begin{equation*}
		\partial_t^\Phi u(t,x)=\frac{1}{2}\partial_x^2 u(t,x), \quad t>0, \ x<\varphi(t).
	\end{equation*}
}
	Furthermore, if $x \ge \varphi(t)$, then, by the second equality in \eqref{eq:nonlocmovbq}, it holds $u(t,x)=0$. The condition $u(0,x)=f(x)$ is guaranteed by assumption. Next, we have, for any $T>0$,
	\begin{equation*}
		\sup_{t \in [0,T]}|u(t,x)| \le |{\sf supp}(f)|\Norm{f}{L^\infty(\R)}\sup_{y \in {\sf supp}(f)}\sup_{t \in [0,T]}q_\Phi(t,x;y)
	\end{equation*}
	and then, taking the limit as $x \to -\infty$, by the third equality in \eqref{eq:nonlocmovbq}, we get
	\begin{equation*}
		\lim_{x \to -\infty}\sup_{t \in [0,T]}|u(t,x)|=0.
	\end{equation*}
	This proves that $u$ satisfies \eqref{eq:nonlocmovb}.
		
	Finally, we need to show that $\partial_t^\Phi u(\cdot,x) \in L^1_{\rm loc}(\btrev{\overline{E_1(x)}})$ for all $x \in E_2 \setminus \{\varphi(0)\}$. To do this, let us split again $u$ as in \eqref{eq:solusplit} and notice that
	\begin{equation*}
		\partial_t^\Phi u(t,x)=\partial_t^\Phi \widetilde{u}(t,x)+\partial_t^\Phi u_r(t,x).
	\end{equation*}
	Concerning $\partial_t^\Phi \widetilde{u}(t,x)$, \textcolor{black}{we already know that $\partial_t^\Phi \widetilde{u}(\cdot,x) \in L^1_{\rm loc}(\overline{E_1(x)})$ for all $x \in E_2$ by Theorem~\ref{thm:Dini1}.}
	 Concerning $u_r$ let us distinguish among two cases. If $x<\varphi(0)$, then consider any $T>0$. Notice that
	\begin{align*}
		\int_0^T &\int_0^t |\partial_t^\Phi p_\Phi(t-w,x;\varphi(w))|p_{\mathcal{T}}(w;y)\, dw \, dt=\int_0^T \left(\int_w^T  \left|\partial_t^\Phi p_\Phi(t-w,x;\varphi(w))\right|\, dt\right) p_{\mathcal{T}}(w;y)\, dw\\
		&= \frac{1}{2}\int_0^T \left(\int_w^T  \left|\partial_x^2 p_\Phi(t-w,x;\varphi(w))\right|\, dt\right) p_{\mathcal{T}}(w;y)\, dw\\
		&\le \frac{1}{2}\sqrt{\frac{2}{\pi}}\left(\left(\frac{10}{e}\right)^{\frac{5}{2}}+\left(\frac{6}{e}\right)^{\frac{3}{2}}\right)\int_0^T (x-\varphi(w))^{-3} \left(\int_w^T \E\left[e^{-\frac{(x-\varphi(w))^2}{4L(t)}}\right] \, dt\right) p_{\mathcal{T}}(w;y)\, dw\\
		&\le \frac{(x-\varphi(0))^{-3}T}{2}\sqrt{\frac{2}{\pi}}\left(\left(\frac{10}{e}\right)^{\frac{5}{2}}+\left(\frac{6}{e}\right)^{\frac{3}{2}}\right),
	\end{align*}
\btrev{where we used the estimates \eqref{estim23}.}
		Integrating this against $|f(y)|$ we get that $u_r(\cdot,x) \in L^1_{\rm loc}(\btrev{\overline{E_1(x)}})$. If $\varphi$ is not constant and \textcolor{black}{$x>\varphi(0)$, then let $\varphi^{-1}(x)=\{w_x\}$ and observe that $w_x>0$. Let also $k_\Phi$ be as in \eqref{eq:kPhi} and recall \eqref{eq:finalequality}, so that
			\begin{equation*}
				\partial^\Phi_t u_r(t,x)=-\int_{-\infty}^{\varphi(0)}\int_0^{+\infty}\partial_t k_\Phi(t,w,x)p_{\cT}(w;y)f(y)\, dw\, dy.
			\end{equation*}
		Hence we have, for any $T>w_x$,
		\begin{align*}
			\int_{w_x}^{T}|\partial^\Phi_t u_r(t,x)|\, dt &\le \int_{-\infty}^{\varphi(0)}\int_0^{+\infty}\left|\partial_t k_\Phi(t,w,x)\right|p_{\cT}(w;y)|f(y)|\, dw\, dy\\
			&\le \Norm{f}{L^\infty(\R)}\int_{{\sf supp}(f)}\int_0^{+\infty}\left|\partial_t k_\Phi(t,w,x)\right|p_{\cT}(w;y)\, dw\, dy.
		\end{align*}
		Now let recall that if we define $\partial_t k_\Phi(t,w,x)$ as in \eqref{eq:rhsbound} and $\partial_t k_\Phi(t,t,x)=0$, where both the limits are uniform in $t \in [w_x,T]$, then the function $\partial_t k_\Phi(\cdot,\cdot,x)$ is continuous on $(t,w) \in [w_x,T] \times \R$. Furthermore, we recall that for $w \ge t$ we have $\partial_t k_\Phi(t,w,x)=0$, while \eqref{eq:partialtkPhi2} holds for $w<t$. Hence we finally get
		\begin{align*}
			\int_{w_x}^{T}|\partial^\Phi_t u_r(t,x)|\, dt 
			&\le \Norm{f}{L^\infty(\R)}\left(\sup_{(t,w) \in [w_x,T] \times (0,+\infty)}k_\Phi(t,w,x)\right)|{\sf supp}(f)|,
		\end{align*}
		that implies $\partial_t^\Phi u_r(\cdot,x) \in L^1_{\sf loc}(\overline{E_1(x)})$.}

\end{proof}
\textcolor{black}{\begin{rmk}
	Notice that Dini-continuity has been only used to guarantee that $\partial_t^\Phi \widetilde{u}(\cdot,x) \in L^1_{\sf loc}(\overline{E_1(x)})$. Theorem \ref{SPthm} still holds if $f$ is not Dini-continuous, except for condition (4).
\end{rmk}}
Concerning uniqueness, let us first show the following general result.
\begin{thm}\label{thm:unique1}
	Under the assumptions of Theorem \ref{SPthm}, there exists at most one function $u:\R_0^+ \times \R \to \R$ satisfying items $(1),(2),(3)$ and $(5)$ of Theorem \ref{SPthm} and
	\begin{itemize}
		\item[$(4^\prime)$] For all $x \in E_2$ it holds $\partial_t^\Phi u(\cdot,x) \in L^1_{\rm loc}(\overline{E_1(x)})$.
	\end{itemize}
\end{thm}
\begin{proof}
	Since the equation is linear, it is sufficient to prove the statement for $f \equiv 0$. Precisely, we only need to prove that if $u$ satisfies  $(1),(2),(3),(4^\prime),(5)$ then $u \equiv 0$. This is clear since in our case the set $E=\{(t,x) \in \R^+_0 \times \R: \ x<\varphi(t)\}$ is unbounded and $\lim_{x \to -\infty}u(t,x)=0$ locally uniformly with respect to $t\ge 0$, hence we can apply Theorem \ref{thm:weakmax} to both $u$ and $-u$ to guarantee that
	\begin{equation*}
		\inf_{(t,x) \in \overline{E}}u(t,x)=\sup_{(t,x) \in \overline{E}}u(t,x)=0.
	\end{equation*}
\end{proof}
A plain corollary, that implies uniqueness in Theorem \ref{thm:mainfrac} for constant $\varphi$, is the following one.
\begin{coro}
	Under the assumptions of Theorem \ref{SPthm}, if $\varphi$ is constant, then \eqref{eq:usol} is the unique solution of \eqref{eq:nonlocmovb} satisfying items $(1),(2),(3)$ and $(5)$ of Theorem \ref{SPthm} and $(4^\prime)$ of Theorem \ref{thm:unique1}.
\end{coro}
\begin{proof}
	Just notice that in such a case $\varphi(0)\not \in E_2$ and use Theorems \ref{SPthm} and \ref{thm:unique1}.
\end{proof}
\begin{rmk}
	If $\varphi$ is constant, notice that $u(t,\varphi(0))=0$, that implies
	\begin{equation*}
		\partial_t^\Phi u_r(\cdot,\varphi(0))=-\partial_t^\Phi \widetilde{u}(\cdot,\varphi(0)) \in L^1_{\rm loc}(\R_0^+).	
	\end{equation*}
	Hence, in such a case, $\partial_t^\Phi u_r(\cdot,x) \in L^1_{\rm loc}(\R_0^+)$ for all $x \in \R$.
\end{rmk}
Now, let us consider the case in which $\varphi$ is non-constant (and thus non-decreasing). In such a case, we need to require the additional regularity given by conditions \eqref{eq:integr01} and \eqref{eq:varphicond} below: however, these conditions are not too restrictive and can be checked quite easily, as we show in Propositions \ref{prop:suffcond1} and \ref{prop:suffcond} below.
\begin{coro}\label{coro:unique2}
	Suppose the assumptions of Theorem \ref{SPthm} hold true. Assume further that
	\begin{equation}\label{eq:integr01}
		\int_0^1 \frac{|\Psi(\xi)|}{\sqrt{\Re(\Psi(\xi))}}\, d\xi<\infty
	\end{equation}
	and that for any $y<\varphi(0)$ and any fixed threshold $c>0$ it holds
	\begin{equation}\label{eq:varphicond}
		\E_y\left[(\varphi(\mathcal{T}_c)-\varphi(0))^{-\frac{4+\gamma}{2-\gamma}}\right]<\infty,
	\end{equation}
	where \btrev{$\gamma$ is that in Assumption \ref{orcond} and}
	\begin{equation*}
		\mathcal{T}_c:=\inf\{t>0: \ X(t) \ge c\}.
	\end{equation*}
	Then \eqref{eq:usol} is the unique solution of \eqref{eq:nonlocmovb} satisfying items $(1),(2),(3)$ and $(5)$ of Theorem \ref{SPthm} and $(4^\prime)$ of Theorem \ref{thm:unique1}.
\end{coro} 
\begin{proof}
	We only need to prove that $u$ as in \eqref{eq:usol} is such that $\partial_t^\Phi u(\cdot,\varphi(0)) \in L^1_{\rm loc}(\overline{E_1(\varphi(0))})$, so that the statement follows by Theorems \ref{SPthm} and \ref{thm:unique1}. To do this, we split $u$ as in \eqref{eq:solusplit} and we notice that the condition $\partial_t^\Phi \widetilde{u}(\cdot,\varphi(0)) \in L^1_{\rm loc}(\R_0^+)$ has been already proved in Theorem \ref{SPthm}. Hence, we only need to show that $\partial_t^\Phi u_r(\cdot,\varphi(0)) \in L^1_{\rm loc}(\R_0^+)$. However, since $\partial_t^\Phi u_r(\cdot,\varphi(0)) \in C(\R^+)$, it is sufficient to show that $\partial_t^\Phi u_r(\cdot,\varphi(0)) \in L^1[0,1]$. We set
		\begin{align*}
			\mathcal{P}_1(s)&=\sqrt{2}\int_0^{M_\gamma}|\Psi(\xi)|e^{-s\Re(\Psi(\xi))}\, d\xi \\
			\mathcal{P}_2(s)&= \int_{M_\gamma}^{+\infty}\left(3\overline{\nu}(1)+\xi \int_0^1 \tau \nu(d\tau)+\frac{\xi^2}{2}\int_0^1 \tau^2\nu(d\tau)\right)e^{-s\btrev{C_\gamma}\xi^{2-\gamma}}\, d\xi
		\end{align*}
		and we notice that \btrev{(recall eq. \eqref{diffPhipPhi})}
		\begin{align*}
			\int_{0}^{1}&\left|\partial_t^\Phi u_r(t,\varphi(0))\right|\, dt \\
			& \le  \int_0^{1}\int_{-\infty}^{\varphi(0)}\int_0^t \int_0^{+\infty} p(s,\varphi(0)-\varphi(w);0)\left|\partial_s f_L(s;t-w)\right| p_{\mathcal{T}}(w;y)|f(y)|  \, ds \, dw \, dy \, dt\\
			&= \int_{-\infty}^{\varphi(0)}\int_0^{1} \int_0^{+\infty} p(s,\varphi(0)-\varphi(w);0) \left(\int_w^{1} \left|\partial_s f_L(s;t-w)\right|\, dt\right) p_{\mathcal{T}}(w;y)|f(y)| \, ds \, dw  \, dy\\
			&\le \frac{1}{\pi}\int_{-\infty}^{\varphi(0)}\int_0^{1} \int_0^{+\infty} p(s,\varphi(0)-\varphi(w);0)(\mathcal{P}_1(s)+\mathcal{P}_2(s))\left(\int_w^{1} I_\Phi(t-w)\, dt\right) p_{\mathcal{T}}(w;y)|f(y)| \, ds \, dw  \, dy\\
			&\le \frac{\Norm{I_\Phi}{L^1[0,1]}\Norm{f}{L^\infty(\R)}}{\pi}\int_{{\sf supp}(f)}\int_0^{1} \int_0^{+\infty}(\mathcal{P}_1(s)+\mathcal{P}_2(s)) p(s,\varphi(0)-\varphi(w);0) p_{\mathcal{T}}(w;y)\, ds  \, dw \, dy\\
			&=\frac{\Norm{I_\Phi}{L^1[0,1]}\Norm{f}{L^\infty(\R)}}{\pi}\int_{{\sf supp}(f)}\int_0^{1} \int_0^{+\infty}\mathcal{P}_1(s) p(s,\varphi(0)-\varphi(w);0) p_{\mathcal{T}}(w;y)\, ds  \, dw \, dy\\
			&+\frac{\Norm{I_\Phi}{L^1[0,1]}\Norm{f}{L^\infty(\R)}}{\pi}\int_{{\sf supp}(f)}\int_0^{1} \int_0^{+\infty}\mathcal{P}_2(s) p(s,\varphi(0)-\varphi(w);0) p_{\mathcal{T}}(w;y)\, ds  \, dw \, dy\\
			&=:I_1+I_2,
		\end{align*}
		where we also used \eqref{upbound2}. To prove that $I_1<\infty$, notice that
		\begin{align*}
			I_1 & \le \frac{\Norm{I_\Phi}{L^1[0,1]}\Norm{f}{L^\infty(\R)}|{\sf supp}(f)|S_{\mathcal{T}}({\sf supp}(f);1)}{\pi\sqrt{\pi}} \int_0^{M_\gamma}\int_0^{+\infty}s^{-\frac{1}{2}}|\Psi(\xi)|e^{-s\Re(\Psi(\xi))}\, ds \, d\xi\\
			&=\frac{\Norm{I_\Phi}{L^1[0,1]}\Norm{f}{L^\infty(\R)}|{\sf supp}(f)|S_{\mathcal{T}}({\sf supp}(f);1)}{\pi} \int_0^{M_\gamma}\frac{|\Psi(\xi)|}{\sqrt{\Re(\Psi(\xi))}} d\xi<\infty.
		\end{align*}
		Concerning $I_2$, notice that there exists a constant $C>0$ such that
		\begin{equation*}
			\mathcal{P}_2(s) \le C \int_{M_\gamma}^{+\infty}\xi^2e^{-s\btrev{C_\gamma}\xi^{2-\gamma}}\, d\xi
		\end{equation*}	
		and then
		\begin{equation*}
			I_2 \le C\frac{\Norm{I_\Phi}{L^1[0,1]}\Norm{f}{L^\infty(\R)}}{\pi}\int_{{\sf supp}(f)}\int_0^{1} \int_{M_\gamma}^{+\infty}\xi^2 \left(\int_0^{+\infty} e^{-s\btrev{C_\gamma}\xi^{2-\gamma}} p(s,\varphi(0)-\varphi(w);0)\, ds\right) p_{\mathcal{T}}(w;y) \, d\xi \, dw \, dy
		\end{equation*}
		Now recall that, by \eqref{notint},
		\begin{equation*}
			\int_0^{+\infty}e^{-s\btrev{C_\gamma}\xi^{2-\gamma}}p(s,\varphi(0)-\varphi(w);0)\, ds=\frac{1}{\sqrt{2\btrev{C_\gamma}\xi^{2-\gamma}}}e^{-(\varphi(w)-\varphi(0))\sqrt{2}\xi^{1-\frac{\gamma}{2}}}
		\end{equation*}
		hence
		\begin{align*}
			I_2 &\le C\frac{\Norm{I_\Phi}{L^1[0,1]}\Norm{f}{L^\infty(\R)}}{\pi\sqrt{2\btrev{C_\gamma}}}\int_{{\sf supp}(f)}\int_0^{1} \left(\int_{0}^{+\infty}\xi^{1+\frac{\gamma}{2}} e^{-(\varphi(w)-\varphi(0))\sqrt{2}\xi^{1-\frac{\gamma}{2}}}\, d\xi \right) p_{\mathcal{T}}(w;y) \, dw \, dy\\
			&\le \frac{C\Norm{I_\Phi}{L^1[0,1]}\Norm{f}{L^\infty(\R)}}{(2-\gamma)\pi \btrev{2^{\frac{1+\gamma}{2-\gamma}}}\btrev{\sqrt{C_\gamma}}}\Gamma\left(\frac{4+\gamma}{2-\gamma}\right)\int_{{\sf supp}(f)}\int_0^{1} (\varphi(w)-\varphi(0))^{-\frac{4+\gamma}{2-\gamma}}p_{\mathcal{T}}(w;y) \, dw \, dy\\
			&\le \frac{C\Norm{I_\Phi}{L^1[0,1]}\Norm{f}{L^\infty(\R)}}{(2-\gamma)\pi \btrev{2^{\frac{1+\gamma}{2-\gamma}}}\btrev{\sqrt{C_\gamma}}}\Gamma\left(\frac{4+\gamma}{2-\gamma}\right)\int_{{\sf supp}(f)}\E_y\left[(\varphi(\mathcal{T})-\varphi(0))^{-\frac{4+\gamma}{2-\gamma}}\right] \, dy.
		\end{align*}
		Now let
		\begin{equation*}
			\mathcal{T}_{\varphi(0)}=\inf\{t>0: \ X(t) \ge \varphi(0)\}
		\end{equation*}
		and observe that $\mathcal{T}_{\varphi(0)} \le \mathcal{T}$ a.s. Thus
		\begin{align*}
			I_2 &\le \frac{C\Norm{I_\Phi}{L^1[0,1]}\Norm{f}{L^\infty(\R)}}{(2-\gamma)\pi \btrev{2^{\frac{1+\gamma}{2-\gamma}}}\btrev{\sqrt{C_\gamma}}}\Gamma\left(\frac{4+\gamma}{2-\gamma}\right)\int_{{\sf supp}(f)}\E_y\left[(\varphi(\mathcal{T}_{\varphi(0)})-\varphi(0))^{-\frac{4+\gamma}{2-\gamma}}\right] \, dy.
		\end{align*}
		Next, define
		\begin{equation*}
				\mathcal{T}_{\varphi(0)}(y)=\inf\{t>0: \ X(t)+y \ge \varphi(0)\}
		\end{equation*}
		so that
		\begin{equation*}
			\E_y\left[(\varphi(\mathcal{T}_{\varphi(0)})-\varphi(0))^{-\frac{4+\gamma}{2-\gamma}}\right]=\btrev{\E_0}\left[(\varphi(\mathcal{T}_{\varphi(0)}(y))-\varphi(0))^{-\frac{4+\gamma}{2-\gamma}}\right].
		\end{equation*}
		Furthermore, setting $\overline{y}=\max {\sf supp}(f)$, we have $\mathcal{T}_{\varphi(0)}(\overline{y}) \le \mathcal{T}_{\varphi(0)}(y)$ a.s. As a consequence
		\begin{align*}
			I_2 &\le \frac{C\Norm{I_\Phi}{L^1[0,1]}\Norm{f}{L^\infty(\R)}|{\sf supp}(f)|}{(2-\gamma)\pi \btrev{2^{\frac{1+\gamma}{2-\gamma}}}\btrev{\sqrt{C_\gamma}}}\Gamma\left(\frac{4+\gamma}{2-\gamma}\right)\E\left[(\varphi(\mathcal{T}_{\varphi(0)}(\overline{y}))-\varphi(0))^{-\frac{4+\gamma}{2-\gamma}}\right]<\infty
		\end{align*}
		by assumption.
	\end{proof}
	{\begin{rmk}
		With the same exact proof, under the assumptions of Corollary \ref{coro:unique2}, we also have continuity with respect to initial conditions. Indeed, consider $u_1$ and $u_2$ to be the unique solutions, satisfying items $(1)$, $(2)$, $(3)$ and $(5)$ of Theorem \ref{SPthm} and $(4^\prime)$ of Theorem \ref{thm:unique1}, of
		\begin{equation*}
			\begin{cases}
				\displaystyle \partial_t^\Phi u_j(t,x)=\frac{1}{2}\partial^2_xu_j(t,x) & t>0, \ x<\varphi(t)\\
				u_j(t,x)=0 & t \ge 0, \ x\ge \btrev{\varphi(t)}\\
				u_j(0,x)=f_j(x) & x<\varphi(0)\\
				\lim_{x \to -\infty}u_j(t,x)=0 & \mbox{locally uniformly with respect to $t>0$},
			\end{cases}
		\end{equation*}
		where $f_j \in C_{\rm c}(-\infty,\varphi(0))$ for $j=1,2$. Then, by linearity of the involved operators, $w=u_1-u_2$ is the unique solution of
		\begin{equation*}
			\begin{cases}
				\displaystyle \partial_t^\Phi w(t,x)=\frac{1}{2}\partial^2_xw(t,x) & t>0, \ x<\varphi(t)\\
				w(t,x)=0 & t \ge 0, \ x\ge \btrev{\varphi(t)}\\
				w(0,x)=f_1(x)-f_2(x) & x<\varphi(0)\\
				\lim_{x \to -\infty}w(t,x)=0 & \mbox{locally uniformly with respect to $t>0$},
			\end{cases}
		\end{equation*}
		satisfying items $(1)$, $(2)$, $(3)$ and $(5)$ of Theorem \ref{SPthm} and $(4^\prime)$ of Theorem \ref{thm:unique1}. Hence, by the positive maximum principle given in Theorem \ref{thm:weakmax}, we get that for all $T>0$ it holds
		\begin{equation*}
			\sup_{(t,x) \in [0,T]\times \R}|u_1(t,x)-u_2(t,x)|=\sup_{(t,x) \in [0,T]\times \R}|w(t,x)|=\max_{\btrev{x<\varphi (0)}}|f_1(x)-f_2(x)|.
		\end{equation*}
	\end{rmk}}
	We state here some sufficient conditions that guarantee \eqref{eq:integr01} and \eqref{eq:varphicond}.
	\begin{prop}\label{prop:suffcond1}
		Suppose the assumptions of Theorem \ref{SPthm} hold true. Assume further that
		\begin{equation*}
				\int_1^{+\infty} \log(t)\nu(dt)<\infty;
			\end{equation*}
		Then \eqref{eq:integr01} is satisfied.	
	\end{prop}
	\begin{proof}
		Let us observe that, by Lemma \ref{lemmaintzero}, for $\xi \in (0,1)$,
		\begin{equation*}
			\frac{|\Psi(\xi)|}{\sqrt{\Re(\Psi(\xi))}} \le \sqrt{\Re(\Psi(\xi))}+\frac{1}{\btrev{\sqrt{C_0}}}\frac{|\Im(\Psi(\xi))|}{\xi}.
		\end{equation*}
		Since $\Re(\Psi(\cdot))$ is continuous,		
		it is clear that
		\begin{equation*}
			\int_0^1 \sqrt{\Re(\Psi(\xi))}\, d\xi<\infty. 
		\end{equation*}
		On the other hand,
		\begin{align*}
			&\int_0^1 \frac{|\Im(\Psi(\xi))|}{\xi}\, d\xi \le \int_0^1\int_0^{+\infty}\frac{|\sin(\xi t)|}{\xi}\nu(dt)\, d\xi =\int_0^{+\infty} t \left(\int_0^1 \frac{|\sin(\xi t)|}{\xi t}\, d\xi \right) \nu(dt)\\
			&=\int_0^{+\infty} \left(\int_0^t \frac{|\sin(z)|}{z}\, dz \right) \nu(dt)=I_1+I_2+I_3
	\end{align*}
	where
	\begin{equation*}
		I_1=\int_0^{1} \left(\int_0^t \frac{|\sin(z)|}{z}\, dz \right) \nu(dt) \qquad I_2=\int_1^{+\infty} \left(\int_0^1 \frac{|\sin(z)|}{z}\, dz \right) \nu(dt)
	\end{equation*}
	\begin{equation*}
		I_3=\int_1^{+\infty} \left(\int_1^t \frac{|\sin(z)|}{z}\, dz \right) \nu(dt).
	\end{equation*}	
	Now recall that there exists a constant $C>0$ such that for $z \in (0,1)$
	\begin{equation*}
		\frac{|\sin(z)|}{z} \le C
	\end{equation*}	
		and then
	\begin{equation*}
		I_1 \le C \int_0^1 t\nu(dt)<\infty, \qquad I_2 \le C \overline{\nu}(1)<\infty.
	\end{equation*}	
	Concerning $I_3$, notice that
	\begin{equation*}
		I_3 \le \int_1^{+\infty} \int_1^t \frac{1}{z}\, dz \, \nu(dt)=\int_1^{+\infty}\log(t)\nu(dt)<\infty.
	\end{equation*}
	\end{proof}
	\begin{prop}\label{prop:suffcond}
		Suppose the assumptions of Theorem \ref{SPthm} hold true. Assume further that there exist two constants $\beta_1,\beta_2>0$ such that
		\begin{equation*}
			\lim_{\lambda \to +\infty}\frac{\phi^{\beta_1}(\lambda)}{\lambda}=+\infty \qquad \mbox{ and } \qquad  \liminf_{w \to 0^+}\frac{\varphi(w)-\varphi(0)}{w^{\beta_2}}>0.
		\end{equation*}
		Then \eqref{eq:varphicond} holds true.
	\end{prop}
	\begin{proof}
		Notice that there exist two constants $C,\delta>0$ such that for $w \in (0,\delta)$ it holds
		\begin{equation*}
			\varphi(w)-\varphi(0) \ge Cw^{\beta_2}
		\end{equation*}
		and then \eqref{eq:varphicond} is implied by
		\begin{equation*}
			\E\left[\mathcal{T}_c^{-\beta_2\frac{4+\gamma}{2-\gamma}}\mathbf{1}_{(0,\delta]}(\mathcal{T}_c)\right]<\infty.
		\end{equation*}
		The latter turns out to be true by employing \cite[Theorem 2.16]{annals2020}; notice that despite the assumptions considered here are lighter, the very same proof still holds.
	\end{proof}
	In case $\phi(\lambda)=\lambda^\alpha$, notice that if $\varphi^\prime(0+)>0$, then the conditions of Proposition \ref{prop:suffcond} hold with $\beta_1=\frac{2}{\alpha}$ and $\beta_2=1$. At the same time, while it is clear that the condition of Proposition \ref{prop:suffcond1} holds true, one can also verify \eqref{eq:integr01} by hands. As a consequence, combining Theorems \ref{SPthm} and \ref{thm:unique1} we get  Theorem \ref{thm:mainfrac}.
	\begin{rmk}
		Notice that under the assumptions of Theorems \ref{thm314} and \ref{thm:unique1}, one can show that $q_\phi(t,x;y)$ is the unique fundamental solution of \eqref{eq:nonlocmovb}, in the sense that if $\widetilde{q}_\phi:\R^+ \times \R \times \R \to \R$ is a continuous function such that for any $f \in C_c^\infty(-\infty,\varphi(0))$ the function $u(t,x)=\int_{-\infty}^{\varphi(0)}\widetilde{q}_\phi(t,x;y)f(y)\, dy$ satisfies \eqref{eq:nonlocmovb}, then $\widetilde{q}_\phi(t,x;y)=q_\phi(t,x;y)$ for $(t,x) \in \R^+ \times \R$ and $y \in (-\infty,\varphi(0))$. Indeed, by Theorem \ref{thm:unique1}, we get that for all $(t,x) \in \R^+\times \R$ and $f \in C^\infty_c(-\infty,\varphi(0))$ it holds
		\begin{equation*}
			\int_{-\infty}^{\varphi(0)}(\widetilde{q}_\phi(t,x;y)-q_\phi(t,x;y))f(y)\, dy=0,
		\end{equation*}
		that implies, by the Fundamental Lemma of the Calculus of Variations (see \cite[Theorem 1.24]{dac}), Theorem \ref{thm314} and the required continuity of $\widetilde{q}_\phi$ in the $y$ variable, the desired equality.
	\end{rmk}
\section{Anomalous diffusive behaviour}
\label{secmsd}
In this section, we focus on the anomalous diffusive behaviour of the processes $X_\Phi$ and $X_\Phi^\dagger$. Let us stress that the non-killed process $X$ could be an anomalous diffusion. Indeed, if we fix, without loss of generality, the initial value $y=0$, we have, by the independence of the Brownian motion $B$ and the inverse subordinator $L$ and by \cite[Proposition III.1]{bertoinb},
\begin{equation*}
	\E_0[|X_\Phi(t)|^2]=\E_0[\E_0[|B(L(t))|^2 \mid L(t)]]=\E_0[L(t)]=U_1(t) \asymp \frac{1}{\Phi(1/t)},
\end{equation*}
where $U_p$ is defined in \eqref{Up} and $f(t) \asymp g(t)$ means that there exists a constant $C>1$ such that 
\begin{equation*}
C^{-1}g(t) \le f(t) \le Cg(t).	
\end{equation*}
\textcolor{black}{Since $\sigma$ has $0$ drift, a necessary and sufficient condition in order to have a subdiffusion is as follows.
\begin{prop}
	Under Assumption \ref{ass2}, $X_\Phi$ is a subdiffusion if and only if
	\begin{equation}\label{eq:firstmoment}
		\int_{1}^{\infty}t\nu(dt)=\infty.
	\end{equation}
\end{prop}
\begin{proof}
	Notice that under Assumption \ref{ass2}, we have, by monotone convergence,
	\begin{equation*}
		\lim_{\lambda \downarrow 0}\frac{\Phi(\lambda)}{\lambda}=\int_{0}^{+\infty}\tau \nu(d\tau), \quad \mbox{ hence }\quad 
		\lim_{t \to +\infty}\frac{1/t}{\Phi\left(1/t\right)}=\frac{1}{\int_0^{+\infty}\tau \nu(d\tau)}.
	\end{equation*}
	Hence the statement is clear once we notice that as $t \to +\infty$ it holds
	\begin{equation*}
		t^{-1}\E_0\left[\left|X_\Phi(t)\right|^2\right] \asymp \frac{1/t}{\Phi(1/t)}.
	\end{equation*}
\end{proof}
}
It remains to show that the same holds also for the killed process $X^\dagger$. To do this, we need to handle separately the constant boundary case. We first recall the formula for the mean square displacement of the killed Brownian motion.
\begin{prop}\label{prop:kBM}
Let $c>0$ and
\begin{equation*}
	T_c:=\inf\{t>0: \ B(t)>c\}.
\end{equation*}	
Then
\begin{align}\label{eq:msdbM}
	\E_0[|B(t)|^2\mathbf{1}_{T_c>t}]=t\, {\sf erf}\left(\frac{c}{\sqrt{2t}}\right)+\textcolor{black}{2c\sqrt{\frac{t}{2\pi}}e^{-\frac{c^2}{2t}}}-2c^2\left(1-{\sf erf}\left(\frac{c}{\sqrt{2t}}\right)\right),
\end{align}	
where
\begin{equation*}
{\sf erf}(u)=\frac{2}{\sqrt{\pi}}\int_0^u e^{-s^2}\, ds.
\end{equation*}
\end{prop}
The proof easily follows by the reflection principle and is given in Appendix \ref{app:kBM}. It is also worth noticing that
\begin{equation*}
	{\sf erf}(u)=\frac{2}{\sqrt{\pi}}\sum_{n=0}^{+\infty}\frac{(-1)^nu^{2n+1}}{n!(2n+1)}=\frac{2 u}{\sqrt{\pi}}\left(1+\sum_{n=1}^{+\infty}\frac{(-1)^nu^{2n}}{n!(2n+1)}\right)
\end{equation*}
and then
\begin{equation*}
	\sqrt{t}\, {\sf erf}\left(\frac{c}{\sqrt{2t}}\right)=\frac{2 c}{\sqrt{2\pi }}\left(1+\sum_{n=1}^{+\infty}\frac{(-1)^n c^{2n}}{n!(2n+1)(2t)^n}\right).
\end{equation*}
In particular,
\begin{equation*}
	\lim_{t \to +\infty}\sqrt{t}\, {\sf erf}\left(\frac{c}{\sqrt{2t}}\right)=\frac{2 c}{\sqrt{2\pi }}.
\end{equation*}
At the same time
\begin{equation*}
	\lim_{t \to 0}\sqrt{t}\, {\sf erf}\left(\frac{c}{\sqrt{2t}}\right)=0
\end{equation*}
hence there exists a constant ${\sf E}^\star$, depending at most on $c$, such that
\begin{equation*}
	\sqrt{t}\, {\sf erf}\left(\frac{c}{\sqrt{2t}}\right) \le {\sf E}^\star.
\end{equation*}
Furthermore, for $t \ge 1$, there exists a constant ${\sf E}_\star > 0$, depending at most on $c$, such that 
\begin{equation*}
	\sqrt{t}\, {\sf erf}\left(\frac{c}{\sqrt{2t}}\right) \ge {\sf E}_\star.
\end{equation*}
With this in mind, we can prove the following result on the constant boundary case.
\begin{thm}\label{thm:andifconst}
	Let $\varphi(t) \equiv c>0$ and suppose Assumption \eqref{ass2} holds. Then
	\begin{equation*}
		\E_0[|X_\Phi(t)|^2 \mathbf{1}_{\mathcal{T}_c>t}] \underset{\infty}{\asymp} U_{\frac{1}{2}}(t),
	\end{equation*}
	where $U_p$ is defined in \eqref{Up} and, for two non-negative functions $f,g:\R^+ \to \R^+$, the symbol $f \underset{ \infty}{\asymp}g$ means
	\begin{equation*}
		0<\liminf_{t \to \infty} \frac{f(t)}{g(t)} \le \limsup_{t \to \infty} \frac{f(t)}{g(t)} <\infty. 
	\end{equation*}
\end{thm}
\begin{proof}
	By Propositions \ref{prop:refltc} and \ref{prop:kBM}, we clearly have
	\begin{equation*}
		\E_0[|X_\Phi(t)|^2\mathbf{1}_{\mathcal{T}_c>t}]=\E_0\left[L(t)\, {\sf erf}\left(\frac{c}{\sqrt{2L(t)}}\right)+\textcolor{black}{2c\sqrt{\frac{L(t)}{2\pi}}e^{-\frac{c^2}{2L(t)}}}-2c^2\left(1-{\sf erf}\left(\frac{c}{\sqrt{2L(t)}}\right)\right)\right].
	\end{equation*}
	On the one hand, we get
	\begin{equation*}
		\E_0[|X_\Phi(t)|^2\mathbf{1}_{\mathcal{T}_c>t}] \le \E_0\left[L(t)\, {\sf erf}\left(\frac{c}{\sqrt{2L(t)}}\right)+\textcolor{black}{2c\sqrt{\frac{L(t)}{2\pi}}e^{-\frac{c^2}{2L(t)}}}\right] \le \textcolor{black}{\left({\sf E}^\star+2\frac{c}{\sqrt{2\pi}}\right)}  U_{\frac{1}{2}}(t),
	\end{equation*}
	that clearly proves
	\begin{equation*}
		\limsup_{t \to \infty} \frac{\E_0[|X_\Phi(t)|^2\mathbf{1}_{\mathcal{T}_c>t}]}{U_{\frac{1}{2}}(t)} \le 2{\sf E}^\star<\infty.
	\end{equation*}
	On the other hand, notice that
	\begin{align*}
		\E_0[|X_\Phi(t)|^2\mathbf{1}_{\mathcal{T}_c>t}] &\ge 2{\sf E}_\star\E_0\left[\sqrt{L(t)}\mathbf{1}_{\{L(t) \ge 1\}}\right]-2c^2\\
		&=2{\sf E}_\star U_{\frac{1}{2}}(t)-2{\sf E}_\star\E_0\left[\sqrt{L(t)}\mathbf{1}_{\{L(t) \le 1\}}\right]-2c^2\\
		&\ge 2{\sf E}_\star U_{\frac{1}{2}}(t)-2({\sf E}_\star+c^2)
	\end{align*}
	hence
	\begin{equation*}
		\liminf_{t \to \infty} \frac{\E_0[|X_\Phi(t)|^2\mathbf{1}_{\mathcal{T}_c>t}]}{U_{\frac{1}{2}}(t)} \ge 2{\sf E}_\star>0.
	\end{equation*}
\end{proof}
\begin{rmk}
	Notice that differently from the non-killed $X_\Phi(t)$, which exhibits $U_1(t)$ as mean-square displacement, in case $\varphi$ is constant $X^\dagger_\Phi(t)$ admits mean-square displacement whose asymptotic behaviour is given by $U_{\frac{1}{2}}(t)$. This was expected, as it also hold in the case of the Brownian motion. Indeed, while $\E_0[|B(t)|^2]=t$, we have
	\begin{align*}
		\lim_{t \to +\infty}\frac{\E_0[|B(t)|^2\mathbf{1}_{T_c>t}]}{\sqrt{t}}&=\lim_{t \to +\infty}\left(\frac{2c}{\sqrt{2\pi }}\left(1+\sum_{n=1}^{+\infty}\frac{(-1)^n c^{2n}}{n!(2n+1)(2t)^n}\right)-\frac{2c^2}{\sqrt{t}}\left(1-{\sf erf}\left(\frac{c}{\sqrt{2t}}\right)\right)+\textcolor{black}{\frac{2c}{\sqrt{2\pi}}e^{-\frac{c^2}{2t}}}\right)\\
		&=\textcolor{black}{\frac{2c}{\sqrt{2\pi}}}.
	\end{align*}
\end{rmk}
In the general case, the following upper and lower bounds hold.
\begin{thm}
Let $\varphi$ be non-decreasing, continuous and such that $\varphi(0)>0$. Suppose that Assumption \eqref{ass2} holds. Then
\begin{equation}\label{eq:upperbound}
 \E_0[|X_\Phi^\dagger(t)|^2\mathbf{1}_{\{\mathcal{T} > t\}}] \le U_1(t) \le \frac{C}{\Phi(1/t)}
\end{equation}
where $C>1$ is a suitable constant. Furthermore,
\begin{equation}\label{eq:liminfgen}
	\liminf_{t \to \infty}\frac{\E_0[|X_\Phi^\dagger(t)|^2\mathbf{1}_{\{\mathcal{T}>t\}}]}{U_{\frac{1}{2}}(t)}>0.
\end{equation}
Finally, if $\lim_{t \to \infty}\varphi(t)=\ell \in \R^+$, then 
\begin{equation}\label{eq:limsupgen}
	\limsup_{t \to \infty}\frac{\E_0[|X_\Phi^\dagger(t)|^2\mathbf{1}_{\{\mathcal{T}>t\}}]}{U_{\frac{1}{2}}(t)}<\infty.
\end{equation}
\end{thm}
\begin{proof}
	To prove \eqref{eq:upperbound}, by \cite[Proposition III.1]{bertoinb}, it is sufficient to show the first inequality. \btrev{Let us use here the notation $\mathds{E}\coloneqq \mathds{E}_0$.} By Theorem \ref{lem:intrep} we know that
	\begin{align}\label{eq:preuplow}
		\begin{split}
		\E[|X_\Phi^\dagger(t)|^2\mathbf{1}_{\{\mathcal{T} > t\}}]&=\int_{\R}x^2q_\Phi(t,x;0)\, dx\\
		&=\int_{\R}x^2p_\Phi(t,x;0)\, dx-\int_0^t \int_{\R}x^2 p_\Phi(t-w,x;\varphi(w))\, \btrev{dx} \,\mu_\varphi(dw;0)\\
		&=\E_0[|X_\Phi(t)|^2]-\int_0^t\E_{\varphi(w)}[|X_\Phi(t-w)|^2]\mu_\varphi(dw;0)\\
		&=U_1(t)-\int_0^tU_1(t-w)\mu_\varphi(dw;0)-\E[\varphi^2(\mathcal{T})\mathbf{1}_{\{\mathcal{T} \le t\}}].	
		\end{split}
	\end{align}
	It is clear that the latter equality implies \eqref{eq:upperbound}. 
	To show \eqref{eq:liminfgen}, notice that, since $X^\dagger_\Phi$ and $X_\Phi$ coincide up to $\mathcal{T}$, we have
	\begin{equation*}
		\E[|X_\Phi^\dagger(t)|^2\mathbf{1}_{\{\mathcal{T} > t\}}]=\E[|X_\Phi(t)|^2\mathbf{1}_{\{\mathcal{T} > t\}}].
	\end{equation*}
	Now let
	\begin{equation*}
		\mathcal{T}_{\varphi(0)}:=\inf\{t>0, \ X_\Phi(t) \ge \varphi(0)\}
	\end{equation*}
	and notice that $\mathcal{T}_{\varphi(0)} \le \mathcal{T}$ a.s., that implies
	\begin{equation*}
		\mathbf{1}_{\{\mathcal{T}>t\}}=\mathbf{1}_{(t,+\infty)}(\mathcal{T}) \ge  \mathbf{1}_{(t,+\infty)}(\mathcal{T}_{\varphi(0)})=\mathbf{1}_{\mathcal{T}_{\varphi(0)}} \ \mbox{ a.s. }
	\end{equation*}
	since $\mathbf{1}_{(t,+\infty)}(\cdot)$ is non-decreasing. Hence
	\begin{equation*}
		\frac{\E[|X_\Phi^\dagger(t)|^2\mathbf{1}_{\{\mathcal{T} > t\}}]}{U_{\frac{1}{2}}(t)} \ge \frac{\E[|X_\Phi(t)|^2\mathbf{1}_{\{\mathcal{T}_{\varphi(0)} > t\}}]}{U_{\frac{1}{2}}(t)},
	\end{equation*}
	that proves \eqref{eq:liminfgen} by Theorem \ref{thm:andifconst}.
	
	Finally, if $\lim_{t \to +\infty}\varphi(t)=\ell$, we let
	\begin{equation*}
		\mathcal{T}_{\ell}:=\inf\{t>0, \ X_\Phi(t) \ge \ell\}
	\end{equation*}
	and notice that $\mathcal{T}_{\ell} \ge \mathcal{T}$ a.s., that implies, arguing as before,
	\begin{equation*}
		\mathbf{1}_{\{\mathcal{T}>t\}}\le \mathbf{1}_{\mathcal{T}_{\ell}} \ \mbox{ a.s. }
	\end{equation*}
	As a consequence,
	\begin{equation*}
		\frac{\E[|X_\Phi^\dagger(t)|^2\mathbf{1}_{\{\mathcal{T} > t\}}]}{U_{\frac{1}{2}}(t)} \le \frac{\E[|X_\Phi(t)|^2\mathbf{1}_{\{\mathcal{T}_{\ell} > t\}}]}{U_{\frac{1}{2}}(t)},
	\end{equation*}
	that implies \eqref{eq:limsupgen} by Theorem \ref{thm:andifconst}.
\end{proof}
\begin{rmk}
	Notice that the asymptotic behaviour provided by \eqref{eq:liminfgen} and \eqref{eq:limsupgen} guarantees that $X_\Phi^\dagger$ exhibits a subdiffusive behaviour \textcolor{black}{under \eqref{eq:firstmoment}}, since
	\begin{equation*}
		U_{\frac{1}{2}}(t) \le \sqrt{U_1(t)} \le \frac{C}{\sqrt{\Phi\left(\frac{1}{t}\right)}}
	\end{equation*}
	for some constant $C>1$.
\end{rmk}
While this is enough to guarantee that $X_\Phi^\dagger$ is a subdiffusion \textcolor{black}{under \eqref{eq:firstmoment}}, we are not aware of any explicit relation between the behaviour of $U_{\frac{1}{2}}(t)$ and $\Phi(z)$, except that in some specific cases. 




 Following \cite[Chapter $2$, Section $2.1.2$]{bingham}, for a positive function $f:\R^+ \to \R^+$ let us define $\alpha^\ast(f)$ as the infimum of the \btrev{$\alpha \in \mathbb{R}$} such that there exists a constant $D^\ast(\alpha)>0$ for which as $x \to \infty$
\begin{equation*}
	\frac{f(\lambda x)}{f(x)} \le D^\ast(\alpha)(1+o(1))\lambda^\alpha,
\end{equation*}
locally uniformly with respect to $\lambda \ge 1$. On the other hand, we define $\alpha_\ast(f)$ as the supremum of the $\alpha>0$ such that there exists a constant $D_\ast(\alpha)>0$ for which as $x \to \infty$
\begin{equation*}
	\frac{f(\lambda x)}{f(x)} \ge D_\ast(\alpha)(1+o(1))\lambda^\alpha,
\end{equation*}
locally uniformly with respect to $\lambda \ge 1$. The quantities $\alpha^\ast(f)$ and $\alpha_\ast(f)$ are called respectively the upper and lower Matuszewska indices. Notice that if $f(\lambda)=\lambda^\alpha$, then clearly $\alpha^\ast(f)=\alpha_\ast(f)=\alpha$. In general we say that $f$ is $O$-regularly varying if both Matuszewska indices are finite. One can easily check that if $f$ is of (extended) regular variation, then it is also $O$-regularly varying. For further details we refer to \cite[Chapter $2$]{bingham}. Here, we can prove the following statement.
\begin{prop}
Recall the definition of $U_p$ in \eqref{Up}.	If $\Phi(1/\cdot)$ is $O$-regularly varying then
	\begin{equation*}
		U_p(t) \underset{ \infty}{\asymp} \frac{1}{\left(\Phi\left(\frac{1}{t}\right)\right)^p}.
	\end{equation*}
\end{prop}
\begin{proof}
	Notice that, for $p > 0$, $U_{p}$ is non-decreasing and non-negative. Its Laplace-Stieltjes transform is
	\begin{equation*}
		\widetilde{U}_p(z):=\int_0^{+\infty}e^{-z t}U_p(dt)=z\int_0^{+\infty}e^{-zt}U_p(t)dt=\frac{\Gamma(p+1)}{(\Phi(z))^p},
	\end{equation*}
	where we used \eqref{eq:Laptrans}. Now let $-\infty<\alpha<\alpha_\ast(\Phi(1/\cdot))$. Then there exists a constant $D_\ast(\alpha)$ such that, as $z \to \infty$,
	\begin{equation*}
		\frac{\widetilde{U}_p\left(\frac{1}{\lambda z}\right)}{\widetilde{U}_p\left(\frac{1}{z}\right)}=\frac{(\Phi\left(\frac{1}{ z}\right))^p}{(\Phi\left(\frac{1}{\lambda z}\right))^p} \le \frac{1}{D^p_\ast(\alpha)}(1+o(1))\lambda^{-p\alpha}
	\end{equation*}
uniformly with respect to $\lambda \ge 0$. Hence $\alpha^\ast(\widetilde{U}_p(1/\cdot))=-p\alpha_\ast(\Phi(1/\cdot))$. Analogously, one can prove that $\alpha_\ast(\widetilde{U}_p(1/\cdot))=-p\alpha^\ast(\Phi(1/\cdot))$, hence $\widetilde{U}_p(1/\cdot)$ is $O$-regularly varying. Hence, by the de Haan-Stadtm\"{u}ller Tauberian theorem (see \cite[Theorem 2.10.2]{bingham}), it holds
\begin{equation*}
	U_p(t) \underset{\infty}{\asymp} \widetilde{U}_p(1/t)=\frac{\Gamma(p+1)}{\left(\Phi\left(\frac{1}{t}\right)\right)^p}.
\end{equation*}
\end{proof}
As a consequence, we have the following result.
\begin{coro}
	Let $\varphi$ be non-decreasing, continuous and such that $\varphi(0)>0$. Suppose that Assumption \eqref{ass2} holds and that $\Phi(1/\cdot)$ is $O$-regularly varying. Then there exist two constants $t_0,C$ such that
	\begin{equation*}
		\E_0[|X_\Phi^\dagger(t)|^2 \mathbf{1}_{\{\mathcal{T}>t\}}] \ge \frac{C}{\sqrt{\Phi\left(\frac{1}{t}\right)}}, \ t \ge t_0.
	\end{equation*}
	If furthermore $\varphi$ is bounded, then
	 \begin{equation*}
	 	\E_0[|X_\Phi^\dagger(t)|^2 \mathbf{1}_{\{\mathcal{T}>t\}}] \underset{\infty}{\asymp} \frac{1}{\sqrt{\Phi\left(\frac{1}{t}\right)}}.
	 \end{equation*}
\end{coro}
\appendix \section{Proofs of some technical results}
\label{appendix}
\subsection{Proof of Proposition \ref{derfL}}\label{App1True}
Let us first show \eqref{dert} and \eqref{upbound}. Consider $h>0$ and observe that, by \eqref{rapprdensfL},
\begin{align}
	\frac{f_L(s;t+h)-f_L(s;t)}{h}= \, &\int_0^t \overline{\nu}(\tau)\frac{g_\sigma(t+h-\tau;s)-g_\sigma(t-\tau;s)}{h}d\tau\\
	&+\int_t^{t+h} \overline{\nu}(\tau)\frac{g_\sigma(t+h-\tau;s)}{h}d\tau\\
	=: \, &I_1(h)+I_2(h).
\end{align}
{We want to take the limit as $h \to 0^+$.} Concerning $I_2(h)$, we have
\begin{align}
	I_2(h) \, = \, &\int_0^{h} \overline{\nu}(t+h-\tau)\frac{g_\sigma(\tau;s)}{h}d\tau\\
	= \, &\int_0^{h} (\overline{\nu}(t+h-\tau)-\overline{\nu}(t-\tau))\frac{g_\sigma(\tau;s)}{h}d\tau\\
	&+\int_0^{h} \overline{\nu}(t-\tau)\frac{g_\sigma(\tau;s)}{h}d\tau\\
	= \, &I_3(h)+I_4(h).
\end{align}
{It is clear that}
\begin{equation}
	\lim_{h \to 0^+}I_4(h)=\overline{\nu}(t)g_\sigma(0^+;s)=0,
\end{equation}
\btrev{since, under \ref{orcond}, the density $x \mapsto g_\sigma(x,s)$ is $C^\infty$ (see \cite{orey1968continuity}) and thus it must tends to zero as $x \to 0$.}
{To work with} $I_3(h)$, {let us observe that,} {by continuity of $\overline{\nu}$,} for any $\varepsilon>0$ there exists $\delta>0$ such that
\begin{equation}
	|\overline{\nu}(t+h-\tau)-\overline{\nu}(t-\tau)|\le 2\pi \varepsilon, \qquad \forall h \in (0,\delta).
\end{equation}
Moreover, {since}
\begin{equation}
	g_\sigma(t;s)=\frac{1}{2\pi}\int_{-\infty}^{+\infty}e^{-i\xi t-s\Psi(\xi)}d\xi \label{gsigma}
\end{equation}
and, in particular,
\begin{equation}
	|g_\sigma(t;s)|\le \frac{1}{2\pi}\int_{-\infty}^{+\infty}e^{-s\Re(\Psi(\xi))}d\xi,
\end{equation}
{we get}
\begin{equation}
	|I_3(h)|\le 2\varepsilon \int_0^{+\infty}e^{-s\Re(\Psi(\xi))}d\xi, \ h \in (0,\delta),
\end{equation}
{that implies}
\begin{equation}
	\limsup_{h \to 0^+}|I_3(h)|\le 2\varepsilon \int_0^{+\infty}e^{-s\Re(\Psi(\xi))}d\xi.
\end{equation}
Sending $\varepsilon \to 0^+$ we obtain $\lim_{h \to 0^+}I_3(h)=0$.\\
{Finally} let us consider $I_1(h)$, {whose} integrand is dominated {via the inequality}
\begin{equation}\label{domination}
	\overline{\nu}(\tau)\frac{|g_\sigma(t+h-\tau;s)-g_\sigma(t-\tau;s)|}{h}\le \frac{1}{\pi}\overline{\nu}(\tau)\int_0^{+\infty}\xi e^{-s\Re(\Psi(\xi))}d\xi
\end{equation}
that is integrable on $(0,t)$. {Hence} we can take the limit inside the integral sign {to} get
\begin{equation}
	\lim_{h \to 0^+}\frac{f_L(s;t+h)-f_L(s;t)}{h}=\int_0^t \overline{\nu}(\tau)\partial_t g_\sigma(t-\tau;s)d\tau.
\end{equation}
{The argument for $h<0$ is exactly the same.} Furthermore, \eqref{upbound} follows {from \eqref{domination}}. Concerning the continuity of $\partial_t f_L(s;\cdot)$, notice that we have show in general that
\begin{equation*}
	\left|\partial_t g_\sigma(t;s)\right| \le \frac{1}{\pi}\int_0^{+\infty}\xi e^{-s\Re(\btrev{\Psi(\xi)})}\, d\xi<\infty
\end{equation*}
hence $\partial_t f_L(s;\cdot)$ is the convolution product of a $L^1(\R^+)$ function with a bounded continuous function and thus it is continuous.

Now we show \eqref{ders} and \eqref{upbound2}. 
		Start again from \eqref{gsigma} to get
\begin{align}
	\left|\frac{g_\sigma(t;s)-g_\sigma(t;s^\prime)}{s-s^\prime}\right| \, \leq \, \frac{1}{2\pi} \int_{-\infty}^{+\infty} \left| \frac{e^{-s\Psi(\xi)}-e^{-s^\prime \Psi(\xi)}}{s-s^\prime} \right| d\xi.
	\label{this2}
\end{align}
Suppose $s>s^\prime$. {By} \eqref{this2} and the \btrev{since $|e^{iv}-e^{iu}|\leq |v-u|$ for any $u,v \in \mathbb{R}$ with $u\leq v$ and} $x^{-1}(1-e^{-x})\leq 1$ \textcolor{black}{for $x>0$}, we find
\begin{align}
	& \left|\frac{g_\sigma(t;s)-g_\sigma(t;s^\prime)}{s-s^\prime}\right| \notag \\ \leq \,& \frac{1}{2\pi} \int_{-\infty}^{+\infty} \left[ e^{-s\Re \l \Psi(\xi) \r} \left| \frac{e^{-is\Im\l \Psi(\xi) \r}- e^{-is^\prime \Im(\Psi(\xi))}}{\Im \l \Psi(\xi) \r (s-s^\prime)} \right| \left| \Im \l \Psi(\xi) \r \right| \right. \notag \\
	&+ \left. e^{-s^\prime \Re \l \Psi(\xi) \r} \left| \frac{e^{-(s-s^\prime) \Re \l \Psi(\xi) \r}-1}{\Re \l \Psi(\xi) \r (s-s^\prime)} \right| \Re \l \Psi(\xi) \r\right] d\xi \notag \\
	\leq \, & \frac{1}{2\pi} \int_{-\infty}^{+\infty} e^{-s\Re \l \Psi(\xi) \r} \left| \Im \l \Psi(\xi) \r \right|d\xi+ \frac{1}{2\pi} \int_{-\infty}^{+\infty} e^{-s^\prime\Re \l \Psi(\xi) \r} \Re \l \Psi(\xi) \r  d\xi \notag \\
	= \, & \frac{1}{2\pi} \int_{-M_\gamma}^{M_\gamma}  \l e^{-s\Re \l \Psi(\xi) \r}\left| \Im \l \Psi(\xi) \r \right| +e^{-s^\prime\Re \l \Psi(\xi) \r}\Re \l \Psi(\xi) \r\r d\xi \notag \\& + \frac{1}{2\pi} \int_{|\xi|>M_\gamma} e^{-s\Re \l \Psi(\xi) \r} \left| \Im \l \Psi(\xi) \r \right| d\xi \notag \\
	&+ \frac{1}{2\pi} \int_{|\xi|>M_\gamma} e^{-s^\prime\Re \l \Psi(\xi) \r} \Re \l \Psi(\xi) \r d\xi \notag \\
	=: \, & I_1+I_2+I_3
\end{align}
where $M_\gamma$ is defined in Lemma \ref{lemmaintzero}.
Note now that
\begin{align}
	\Im \l \Psi(\xi) \r \, = \, - \int_0^{+\infty} \sin(\xi \tau) \nu(d\tau)
\end{align}
is odd and therefore $|\Im \l \Psi(\xi) \r |$ is even. For $\xi >M_\gamma$ and $\tau<1$, it is clear that $\left| \sin (\xi \tau) \right| \leq \xi \tau$ {and then}
\begin{align}
	\left| \Im \l \Psi(\xi)\r \right| \, \leq \, \xi \int_0^1  \tau \nu(d\tau) + \overline{\nu}(1).
	\label{this3}
\end{align}
It follows by \eqref{this3} and \eqref{427} that
\begin{align}
	I_2 \leq \frac{1}{\pi} \int_{\xi>M_\gamma} \l \xi \int_0^1 \tau\nu(d\tau)+\overline{\nu}(1) \r e^{-s\textcolor{black}{C_\gamma}\xi^\alpha} d\xi < \infty,
\end{align}
where $\alpha = 2-\gamma$.
Observe now that $\Re \l \Psi(\xi) \r$ is even and that $1-\cos (\xi \tau) \leq \frac{\xi^2 \tau^2}{2}$, {thus}
\begin{align}
	\Re \l \Psi(\xi) \r \, = \, \int_0^{+\infty} \l 1-\cos (\xi \tau) \r \nu(d\tau) \, \leq \, \frac{\xi^2}{2} \int_0^1 \tau^2 \nu(d\tau) + 2 \overline{\nu}(1).
	\label{this4}
\end{align}
It follows by \eqref{this4} and \eqref{427} that
\begin{align}
	I_3 \leq \frac{1}{\pi} \int_{\xi>M_\gamma} \l \frac{\xi^2}{2} \int_0^1 \tau^2 \nu(d\tau) + 2 \overline{\nu}(1) \r e^{-s^\prime \btrev{C_\gamma}\xi^\alpha} d\xi < \infty.
\end{align}
It also clear that $I_1$ is finite and bounded as follows:
\begin{equation*}
	I_1 \le \frac{\sqrt{2}}{\pi}\int_{0}^{M_\gamma}|\Psi(\xi)|\left(e^{-s\Re(\Psi(\xi))}+e^{-s^\prime \Re(\Psi(\xi))}\right)\, d\xi.
\end{equation*}
Repeating the same argument for $s^\prime > s$ and combining the results we obtain, {for fixed $s>0$} and $s^\prime \in (s/2,3s/2)$,
\begin{align}\label{preupb2}
	\left| \frac{g(t,s)-g(t,s^\prime)}{s-s^\prime} \right| &\leq \frac{2\sqrt{2}}{\pi}\int_{0}^{M_\gamma}|\Psi(\xi)|e^{-\frac{s}{2}\Re(\Psi(\xi))}\, d\xi \nonumber\\
	&+\int_{M_\gamma}^{+\infty}\left(3\overline{\nu}(1)+\xi \int_0^1 \tau \nu(d\tau)+\frac{\xi^2}{2}\int_0^1 \tau^2\nu(d\tau)\right)e^{-\frac{s}{2}\btrev{C_\gamma}\xi^{2-\gamma}}\, d\xi 
\end{align}
Since $\overline{\nu} \in L^1_{\rm loc}(\R^+_0)$, then \eqref{ders} follows by dominated convergence and \eqref{upbound2} follows by \eqref{preupb2}. The latter also shows that
\begin{align*}
	\partial_s g(t,s) = -\frac{1}{2\pi}\int_{-\infty}^{+\infty}\Psi(\xi)e^{-it\xi-s\Psi(\xi)}\, d\xi
\end{align*}
that, for fixed $s>0$, is a continuous function of the variable $t \in \R^+$ by a simple application of the dominated convergence theorem. Furthermore,
\begin{align*}
	\left| \partial_s g(t,s)\right| &\leq \frac{\sqrt{2}}{\pi}\int_{0}^{M_\gamma}|\Psi(\xi)|e^{-s\Re(\Psi(\xi))}\, d\xi \nonumber\\
	&+\int_{M_\gamma}^{+\infty}\left(3\overline{\nu}(1)+\xi \int_0^1 \tau \nu(d\tau)+\frac{\xi^2}{2}\int_0^1 \tau^2\nu(d\tau)\right)e^{-s\btrev{C_\gamma}\xi^{2-\gamma}}\, d\xi. 
\end{align*}
hence $\partial_s f_L(s;\cdot)$ is the convolution product of a $L^1(\R^+)$ function with a bounded continuous function and thus it is continuous.

Now we prove the claimed behaviour at infinity. By \eqref{rapprdensfL} and \eqref{gsigma}, {it holds}
\begin{align}
	\sup_{t \in [a,b]}s f_L(s, t) \, \leq \, &\frac{1}{2\pi}\left(\int_0^b \overline{\nu}  (\tau)d\tau\right)  \, s\int_{-\infty}^{+\infty}  e^{-s \Re \l \Psi(\xi)\r} \, d\xi \notag \\
	\leq \, &\frac{1}{\pi}\left(\int_0^b \overline{\nu}  (\tau)d\tau\right)\left[ \int_0^{M_\gamma} s e^{-s\Re \l \Psi(\xi) \r} d\xi + s\int_{M_\gamma}^{+\infty} e^{-s \btrev{C_\gamma}\xi^{2-\gamma}} d\xi \right]\notag \\
	\leq \, & \frac{1}{\pi} \left(\int_0^b \overline{\nu}  (\tau)d\tau\right)\left[ \int_0^{M_\gamma} s e^{-s\Re \l \Psi(\xi) \r} d\xi + \frac{\btrev{C_\gamma^{-\frac{1}{2-\gamma}}}}{2-\gamma} s^{1-\frac{1}{2-\gamma}} \Gamma\left(\frac{1}{2-\gamma}\right)\right].
	\label{chep}
\end{align}
It is clear that \eqref{chep} tends to zero {as $s \to \infty$} by using dominated convergence on the first integral and the fact that $\gamma>1$. The last assertion follows by similar computation. Indeed, in the same spirit of \eqref{chep}, we obtain
\begin{align}
	\sup_{t \in [a,b]} s^2 | \partial_sf_L(s, t)| \, \leq \, \frac{1}{\pi} \left(\int_0^b \overline{\nu}  (\tau)d\tau\right) \left[ \int_0^{M_\gamma} | \Psi(\xi)|s^2 e^{-s\Re \l \Psi(\xi) \r} d\xi + s^2 \int_{M_\gamma}^\infty | \Psi(\xi)| e^{-s\btrev{C_\gamma}\xi^{2-\gamma}} d\xi\right].
	\label{535}
\end{align}
Using the estimates on $\Re (\Psi(\xi))$ and $\Im (\Psi(\xi))$ determined above we obtain
\begin{align}
	|\Psi(\xi)| \leq \left(\frac{3}{M_\gamma^2}\overline{\nu}(1)+\frac{1}{M_\gamma}\int_0^1 t\nu(dt)+\frac{1}{2}\int_0^1t^2\nu(dt)\right)\xi^2
\end{align}
for $\xi>M_\gamma$. It follows that
\begin{align}
	\begin{split}
	\sup_{t \in [a,b]} &s^2 |\partial_sf_L(s, t)| \, \leq \, 2 \left(\int_0^b \overline{\nu}  (\tau)d\tau\right) \left[ \int_0^{M_\gamma} | \Psi(\xi)|s^2 e^{-s\Re \l \Psi(\xi) \r} d\xi \right.\\
	&\left.+ \frac{1}{2-\gamma} \left(\frac{3}{M_\gamma^2}\overline{\nu}(1)+\frac{1}{M_\gamma}\int_0^1 \tau\nu(d\tau)+\frac{\btrev{C_\gamma^{-\frac{3}{2-\gamma}}}}{2}\int_0^1\tau^2\nu(d\tau)\right)  s^{2-\frac{3}{2-\gamma}}  \Gamma \left(\frac{3}{2-\gamma}\right) \right].
	\label{chep2}	
	\end{split}
\end{align}
It is clear that \eqref{chep2} tends to zero as $s \to \infty$ by using dominated convergence on the first integral and, again, the fact that $\gamma>1$. 

Finally, notice that if $s \in [s_0,s_1]$ for some $0<s_0<s_1$, then by \eqref{upbound} and \eqref{upbound2} we have
\begin{align}
	\notag \sup_{s \in [s_0,s_1]}\left|\partial_t f_L(s,t)\right| &\le \frac{I_\Phi(t)}{\pi}\int_0^{+\infty}\xi e^{-s_0\Re(\Psi(\xi))}\, d\xi\\
	\notag \sup_{s \in [s_0,s_1]}\left|\partial_s f_L(s,t)\right| &\le \frac{I_\Phi(t)}{\pi}\left(\sqrt{2}\int_0^{M_\gamma}|\Psi(\xi)|e^{-s_0\Re(\Psi(\xi))}\, d\xi\right.\\
	&\left.+\int_{M_\gamma}^{+\infty}\left(3\overline{\nu}(1)+\xi\int_0^{1}\tau \nu(d\tau)+\frac{\xi^2}{2}\int_0^1 \tau^2 \nu(d\tau)\right) e^{-s_0\btrev{C_\gamma}\xi^{2-\gamma}}\, d\xi\right),
\end{align}
where the right-hand side converges to $0$ as $t \downarrow 0$. This shows \eqref{eq:unifconvparfL}
\qed
\subsection{Proof of Proposition \ref{prop:keyhole}}\label{app:keyhole}
Let us preliminarily recall that $\Phi$ is holomorphic in $\mathbb{C}(\theta)$ and real on the real axis, hence it is Hermitian. As a consequence, we recall that $\Re(\Phi(a+ib))=\Re(\Phi(a-ib))$. Now let us show that 
\begin{equation*}
	g_\sigma(s,t)=\frac{1}{2\pi }\int_{-\infty}^{+\infty}e^{s(a+ib)-t\Phi(a+ib)}\, db,
\end{equation*}
for any $a>0$, where the integral is absolutely convergent. By \cite[Theorem 4.2.21.a and Theorem 4.1.2]{abhn}, it is sufficient to show that the integral on the right-hand side is absolutely convergent. To do this, fix $a>0$ and notice that
\begin{equation*}
	\int_{-\infty}^{+\infty}\left|e^{s(a+ib)-t\Phi(a+ib)}\right|\, db=e^{sa}\int_{-\infty}^{+\infty}e^{-t\Re(\Phi(a+ib))}\, db.
\end{equation*}
Now let $b_0$ be such that $\Re(\Phi(a+ib)) \ge \frac{2}{t}\log(b)$ for any $b$ such that $|b|>b_0$. Thus we can split the integral as
\begin{align*}
	\int_{-\infty}^{+\infty}\left|e^{s(a+ib)-t\Phi(a+ib)}\right|\, db&=2e^{sa}\int_{0}^{b_0}e^{-t\Re(\Phi(a+ib))}\, db+2e^{sa}\int_{b_0}^{+\infty}e^{-t\Re(\Phi(a+ib))}\, db\\
	&\le 2b_0e^{sa}+\frac{2e^{sa}}{b_0}<\infty.
\end{align*}
This proves the desired absolute integrability. Now consider the sequence
\begin{equation*}
	r_n=\sqrt{\frac{n^2\pi^2}{s^2}-a^2},
\end{equation*}
where $n$ is sufficiently big to have $\frac{n^2\pi^2}{s^2}>a^2$. We have
\begin{equation*}
	\int_{-r_n}^{r_n}e^{s(a+ib)}\, db=\frac{e^{sa}}{si}(e^{isr_k}-e^{-isr_k})=-\frac{2e^{sa}}{s}\sin(sr_n).
\end{equation*}
Now we use the Lipschitz property of the sine function to observe that
\begin{equation*}
	|\sin(sr_n)|=|\sin(sr_n)-\sin(n\pi)| \le |sr_n-n\pi|.
\end{equation*}
Now notice that
\begin{equation*}
	sr_n-n\pi=\sqrt{n^2\pi^2-s^2a^2}-n\pi=n\pi\left(\sqrt{1-\frac{s^2a^2}{n^2\pi^2}}-1\right) \to 0
\end{equation*}
hence taking the limit we have
\begin{equation*}
	\lim_{n \to +\infty}\int_{-r_n}^{r_n}e^{s(a+ib)}\, db=0.
\end{equation*}
Hence we can write
\begin{equation*}
	g_\sigma(s,t)=\frac{1}{2\pi}\lim_{n \to +\infty}\int_{-r_n}^{+r_n}e^{s(a+ib)}\left(e^{-t\Phi(a+ib)}-1\right)\, db.
\end{equation*}
Now we let ${\sf A}_n=a+ir_n$, ${\sf B}_n=a-ir_n$ and we construct an auxiliary contour. We set $R_n=\sqrt{a^2+r_n^2}=\frac{n\pi}{s}$, $\varepsilon<R_n$ and we define ${\sf C}_n=R_ne^{i\theta}$, ${\sf D}(\varepsilon)=\varepsilon e^{i\theta}$, ${\sf E}(\varepsilon)=\varepsilon e^{-i\theta}$, and ${\sf F}_n=R_ne^{-i\theta}$. Then we denote by $\Gamma_n^+$ the directed counter-clockwise circle arc, centred in the origin, connecting ${\sf A}_n$ to ${\sf C}_n$, $\ell^+_n$ the directed segment connecting ${\sf C}_n$ to ${\sf D}(\varepsilon)$, $\gamma_\varepsilon$ the directed clockwise circle arc, centred in the origin, connecting ${\sf D}(\varepsilon)$ to ${\sf E}(\varepsilon)$, $\ell^-_n$ the directed segment connecting ${\sf E}(\varepsilon)$ to ${\sf F}_n$, $\Gamma_n^-$ the directed counter-clockwise circle arc, centred in the origin, connecting ${\sf F}_n$ to ${\sf B}_n$ and finally the directed segment $\ell_n$ connecting ${\sf B}_n$ to ${\sf A}_n$. For a figure of such a contour, check \cite[Figure 1]{tlms2024}. Let $\mathcal{C}$ be the full closed contour and consider the function $F(z)=e^{sz}\left(e^{-t\Phi(z)}-1\right)$. Then $F$ is holomorphic in the domain enclosed by $\mathcal{C}$ and thus
\begin{equation*}
	\int_{\mathcal{C}}F(z)=0
\end{equation*}
or, equivalently,
\begin{align}\label{eq:contoursplit}
	\begin{split}
		\int_{\ell}F(z)\, dz&=-\left(\int_{\Gamma_n^+}F(z)\, dz+\int_{\ell^+_n}F(z)\, dz+\int_{\gamma_\varepsilon}F(z)\, dz+\int_{\ell^-_n}F(z)\, dz+\int_{\Gamma_n^-}F(z)\, dz\right)\\
		&=-\left(I_1^+(r_n)+I_2^+(r_n,\varepsilon)+I_3(\varepsilon)+I_2^-(r_n,\varepsilon)+I_1^-(r_n)\right).	
	\end{split}
\end{align}
Let us also preliminarily notice that
\begin{equation}\label{eq:exponentialcontrol}
	\left|e^{-t\Phi(z)}-1\right|=\left|t\Phi(z)\int_{0}^1e^{-\tau t\Phi(z)}\, d\tau\right| \le t\left|\Phi(z)\right|. 
\end{equation}
Concerning $I_2^\pm$, we have
\begin{align*}
	I_2(r_n,\varepsilon)=-\int_{\varepsilon}^{r_n}e^{i\theta+s\xi e^{i\theta}}\left(e^{-t\Phi(\xi e^{i\theta})}-1\right)\,d\xi, && I_2^-(r_n,\varepsilon)=\int_{\varepsilon}^{r_n}e^{-i\theta+s\xi e^{-i\theta}}\left(e^{-t\Phi(\xi e^{-i\theta})}-1\right)\,d\xi,
\end{align*}
hence, once we recall that $\Phi$ is  Hermitian, it is clear that
\begin{equation*}
	I_2^+(r_n,\varepsilon)+I_2^-(r_n,\varepsilon)=-2i\int_{\varepsilon}^{r_n}\Im\left(e^{i\theta+s\xi e^{i\theta}}\left(e^{-t\Phi(\xi e^{i\theta})}-1\right)\right)\, d\xi.
\end{equation*}
To show that the latter is absolutely convergent even for $r_n \to +\infty$ and $\varepsilon \to 0$, we notice that
\begin{equation*}
	\int_{0}^{+\infty}e^{s\xi \cos(\theta)}\left|e^{-t\Phi(\xi e^{i\theta})}-1\right|\, d\xi \le t\int_{0}^{+\infty}e^{s\tau \cos(\theta)}\left|\Phi(\xi e^{i\theta})\right|\, d\xi.
\end{equation*}
Since $\lim_{\xi \to +\infty}\frac{\left|\Phi(\xi e^{i\theta})\right|}{\xi}=0$, there exists $M>0$ such that for $\tau>M$ it holds $\left|\Phi(\xi e^{i\theta})\right| \le \xi$, hence
\begin{align*}
	\int_{0}^{+\infty}e^{s\xi \cos(\theta)}&\left|e^{-t\Phi(\xi e^{i\theta})}-1\right| \le t\int_{0}^{M}e^{s\xi \cos(\theta)}\left|\Phi(\xi e^{i\theta})\right|\, d\xi+t\int_{M}^{+\infty}\xi e^{s\xi \cos(\theta)}\, d\xi\\
	& \le \frac{MS_Mt}{s|\cos(\theta)|}\left(1-e^{sM \cos(\theta)}\right)+t\frac{e^{sM\cos(\theta)}(1-Ms\cos(\theta))}{s^2\cos^2(\theta)}<\infty,
\end{align*}
where, setting $\mathcal{S}_k:=\{z \in \overline{\mathbb{C}(\theta)}: \ |z| \le k\}$ for any positive constant $k>0$, it holds
\begin{equation*}
	S_k=\max_{z \in \mathcal{S}_k}|\Phi(z)|<\infty.
\end{equation*}
Let us now handle $I_3(\varepsilon)$. There we have, for $\varepsilon<M$,
\begin{equation*}
	\left|I_3(\varepsilon)\right| \le t\varepsilon\int_{-\theta}^{\theta}e^{s\varepsilon \cos(\xi)}\left|\Phi(\varepsilon e^{i\xi})\right|\, d\xi \le t\varepsilon S_\varepsilon e^{s\varepsilon}
\end{equation*}
where the right-hand side converges to $0$ as $\varepsilon \to 0$, since $\Phi(0)=0$. Hence we can take the limit as $\varepsilon \downarrow 0$ in \eqref{eq:contoursplit}, getting
\begin{align}\label{eq:contoursplit2}
	\begin{split}
		\int_{\ell}F(z)\, dz=-\left(I_1^+(r_n)+I_2^+(r_n)+I_2^-(r_n)+I_1^-(r_n)\right),
	\end{split}
\end{align}
where $I_2^\pm(r_n):=I_2^\pm(r_n,\varepsilon)$.
Concerning $I_1^\pm(r_n)$, notice that
\begin{equation*}
	I_1^\pm(r_n)=\int_{\Gamma_n^{\pm}}e^{sz-t \Phi(z)}\, dz+\int_{\Gamma_n^{\pm}}e^{sz}\, dz=I^{\pm}_4(r_n)+I^{\pm}_5(r_n).
\end{equation*}
For $I^{\pm}_4(r_n)$, the same argument as in the proof of \cite[Proposition 5.1, Eqs. (5.9) and (5.12)]{tlms2024} shows that
\begin{equation*}
	\lim_{n \to +\infty}I^{\pm}_4(r_n)=0.
\end{equation*}
Now we want to handle $I^{+}_5(r_n)+I^-_5(r_n)$. To do this, let $\widetilde{\theta}(r_n)=\arctan\left(\frac{r_n}{a}\right)$ and notice that
\begin{align*}
	I_5^+(r_n)=iR_n\int_{\widetilde{\theta}(r_n)}^{\theta}e^{i\xi+sRe^{i\xi}}\, d\xi &&
	I_5^-(r_n)=iR_n\int_{\widetilde{\theta}(r_n)}^{\theta}e^{-i\xi+sRe^{-i\xi}}\, d\xi
\end{align*}
hence
\begin{equation*}
	I_5^+(r_n)+I_5^-(r_n)=2iR_n\int_{\widetilde{\theta}(r_n)}^{\theta}\Re \left(e^{i\xi+sR_ne^{i\xi}}\right)\, d\tau=2\Im\left(iR_n \int_{\widetilde{\theta}(r_n)}^{\theta}e^{i\xi+sR_ne^{i\xi}}\, d\tau\right)=2\Im(I_5^+(r_n)). 
\end{equation*}
Evaluating $I_5^+(r_n)$ explicitly we have
\begin{align*}
	I_5^+(r_n)=\frac{e^{sR_ne^{i\theta}}-e^{sR_ne^{i\widetilde{\theta}(r_n)}}}{s}
\end{align*}
whose imaginary part is given by
\begin{equation*}
	\Im(I_5^+(r_n))=\frac{e^{n\pi\cos(\theta)}\sin(n\pi\sin(\theta))}{s}-\frac{e^{n\pi\cos(\widetilde{\theta}(r_n))}\sin(n\pi\sin(\widetilde{\theta}(r_n)))}{s}.
\end{equation*}
Since $\cos(\theta)<0$, it is clear that the first summand converges towards $0$. Let us handle the second summand. For the exponential, let us first write
\begin{equation*}
	\cos(\widetilde{\theta}(r_n))=\sin\left(\frac{\pi}{2}-\widetilde{\theta}(r_n)\right),
\end{equation*}
where it is clear that $\widetilde{\theta}(r_n) \to \frac{\pi}{2}$. In particular, it is immediate to see that $	\lim_{n \to +\infty}n\pi\cos(\widetilde{\theta}(r_n))=sa$. Concerning the sine function, we use the identity
\begin{equation*}
	\sin(\widetilde{\theta}(r_n))=\frac{r_n}{\sqrt{1+r_n^2}}=\sqrt{\frac{\frac{n^2\pi^2}{s^2}-a^2}{1+\frac{n^2\pi^2}{s^2}-a^2}}=\sqrt{1-\frac{s^2}{s^2+n^2\pi^2-a^2}}.
\end{equation*}
Now we notice that
\begin{equation*}
	n\pi\sqrt{1-\frac{s^2}{s^2+n^2\pi^2-a^2}}=n\pi+n\pi\left(\sqrt{1-\frac{s^2}{s^2+n^2\pi^2-a^2}}-1\right).
\end{equation*}
Again, by the Lipschitz property of the sine function, we have
\begin{equation*}
	\left|\sin(n\pi \sin(\widetilde{\theta}(r_n)))\right|=\left|\sin(n\pi \sin(\widetilde{\theta}(r_n)))-\sin(n\pi)\right| \le n\pi\left(\sqrt{1-\frac{s^2}{s^2+n^2\pi^2-a^2}}-1\right) \to 0.
\end{equation*}
Hence, taking the limit as $n \to +\infty$, we have
\begin{equation*}
	\lim_{n \to +\infty}(I_5^+(r_n)+I_5^-(r_n))=0 \qquad \mbox{ and then } \qquad \lim_{n \to +\infty}(I_1^+(r_n)+I_1^-(r_n))=0.
\end{equation*}
Now we go back to \eqref{eq:contoursplit2} and we write
\begin{equation*}
	\frac{1}{2 \pi}\int_{-r_n}^{r_n}e^{h(a+ib)}\left(e^{-t\Phi(a+ib)}-1\right)\, db=\frac{1}{\pi}\int_{0}^{r_n}\Im\left(e^{i\theta+s\tau e^{i\theta}}\left(e^{-t\Phi(\tau e^{i\theta})}-1\right)\right)\, d\tau-\frac{I_1^+(r_n)+I_1^-(r_n)}{2\pi}.
\end{equation*}
Taking then the limit as $n \to +\infty$ we get \eqref{eq:gstbound}. To prove \eqref{eq:controlgst}, we consider $M>0$ big enough to have $\left|\Phi(\xi e^{i \theta})\right| \le 2\xi^\alpha$ for all $\xi>M$. Then
\begin{align*}
	g_\sigma(s,t) &\le \frac{t}{\pi}\left(\int_0^Me^{s\xi \cos(\theta)}\left|\Phi(\xi e^{i \theta})\right|\, d\xi+\int_M^{+\infty}\xi^\alpha e^{s\xi \cos(\theta)}\, d\xi\right)\le \frac{t}{\pi}\left(MS_M+\frac{\Gamma(\alpha+1)}{s^{\alpha+1}|\cos(\theta)|^{\alpha+1}}\right),
\end{align*}
ending the proof. 
\qed
\subsection{Proof of Theorem \ref{thm:weakmax}}\label{App2new}
To prove Theorem \ref{thm:weakmax}, we first need to show the following auxiliary result.

\begin{lem}
	\label{lemma34}
	\textcolor{black}{Assume \eqref{ass2} holds}. Let $f:[0,T] \to \R$ be a continuous function and $t_0 \in (0,T]$ such that 
	\begin{equation*}
	f(t_0)>f(t), \ \forall t \in (0,t_0).
	\end{equation*}
	Assume that $\partial^\Phi_t f(t)$ is well-defined, continuous on $(0,t_0]$ and belongs to $L^1[0,t_0]$. Then $\partial_t^\Phi f(t_0)\ge 0$.
\end{lem}
\begin{proof}
	Let us first prove the statement under the additional assumption that $f \in {\rm AC}[0,t_0]\cap C^1(0,t_0]$. Consider the function $g(\tau)=f(t_0)-f(\tau)$ for $\tau \in [0,t_0]$, {so that}, being $t_0$ the maximum point of $f$, it holds $g(\tau)\ge 0$ for any $\tau \in [0,t_0]$ and $\partial_t^\Phi g(\tau)=-\partial_t^\Phi f(\tau)$ for any $\tau \in (0,t_0]$. Since $t_0>0$, $g$ cannot be constant. Furthermore, since $f \in C^1(0,t_0]$, $g \in C^1[\varepsilon,t_0]$ for any $\varepsilon \in (0,t_0)$, and then, in particular, $g$ is Lipschitz on $[\varepsilon,t_0]$, i.e.,
	\begin{equation}
		\forall \varepsilon \in (0,T) \ \exists C_\varepsilon>0: \ g(\tau)\le C_\varepsilon |t_0-\tau| \ \forall \tau \in [\varepsilon,\textcolor{black}{t_0}].
		\label{duepuntosette}
	\end{equation}
	Note that {by hypotheses} $g$ is absolutely continuous on {$[0,t_0]$}, thus it follows that we can use \eqref{derdentro} to say that 
	\begin{equation}
		\partial_t^\Phi g(t_0)=\int_0^{t_0}\overline{\nu}(t_0-\tau)\partial_\tau g(\tau)d\tau,
	\end{equation}

	Let us split the integral
	\begin{align}
		\partial_t^\Phi g(t_0)&=\int_0^{\varepsilon}\overline{\nu}(t_0-\tau)\partial_\tau g (\tau)d\tau+\int_\varepsilon^{t_0}\overline{\nu}(t_0-\tau)\partial_\tau g(\tau)d\tau\\
		&=I_1(\varepsilon)+I_2(\varepsilon),
	\end{align}
	where $\varepsilon \in (0,t_0)$.
	We can clearly use dominated convergence theorem to obtain
	\begin{equation}
		I_2(\varepsilon)=\lim_{a \to 0^+}\int_{a}^{t_0-\varepsilon}\overline{\nu}(\tau)\partial_\tau g(\textcolor{black}{t_0}-\tau)d\tau.
	\end{equation}
	Now let us recall that the function $\overline{\nu}$ is non-increasing and finite in $[a,t_0-\varepsilon]$, hence it is of bounded variation with distributional derivative given by $-\nu$. Thus we can apply integration by parts (see \cite[Theorem $3.3.1$]{hille}) to obtain
	\begin{equation}
		I_2(\varepsilon)=\lim_{a \to 0}\left[\overline{\nu}(a)g(t_0-a)-\overline{\nu}(t_0-\varepsilon)g(\varepsilon)-\int_{a}^{t_0-\varepsilon}g(t_0-z)\nu(dz)\right].
	\end{equation}
	Combining inequality \eqref{duepuntosette} with the fact that $t \overline{\nu}(t)\to 0$ as $t \to 0$ (see, e.g., \cite[Eq. (3.7)]{librobern}) and also using monotone convergence theorem, we get
	\begin{equation}
		I_2(\varepsilon)=-\overline{\nu}(t_0-\varepsilon)g(\varepsilon)-\int_0^{t_0-\varepsilon}g(\btrev{t_0}-\tau)\nu(d\tau).
	\end{equation}
	Since $g$ is not constant on $(0,t_0)$, we know there exists $\varepsilon_0 \in (0,t_0)$ such that $g(\varepsilon_0)>0$. Furthermore, we have for $\varepsilon \in (0,\varepsilon_0)$, by continuity of $g$ and the fact that $-\overline{\nu}(t_0-\varepsilon)g(\varepsilon)\le 0$,
	\begin{equation}
		I_2(\varepsilon)\le -\int_{0}^{t_0-\varepsilon_0}g(t_0-\tau)\nu(d\tau)=:-C<0,
	\end{equation}
	where $C$ is finite since, by \eqref{duepuntosette},
	\begin{align*}
		\int_0^{t_0-\varepsilon_0} g(t_0-\tau) \nu(d\tau) \, \leq \,C_{\varepsilon_0} \int_0^{t_0-\varepsilon_0} \tau \, \nu(d\tau) < +\infty.
	\end{align*}
	Now let us consider $I_1(\varepsilon)$. {By absolute continuity of $g$ and monotonicity of $\overline{\nu}$} we clearly have
	\begin{equation}
		I_1(\varepsilon) \le \overline{\nu}(t_0-\varepsilon_0)\int_0^\varepsilon \left| \partial_\tau g(\tau) \right| d\tau.
	\end{equation}
	By absolute continuity of the Lebesgue integral we have that there exists  $\varepsilon \in (0,\varepsilon_0)$ such that $I_1(\varepsilon)\le C/2$. Let us fix this $\varepsilon$ to achieve
	\begin{equation}
		-\partial_t^\Phi f(t_0)=\partial_t^\Phi g(t_0)=I_1(\varepsilon)+I_2(\varepsilon)\le -\frac{C}{2}<0
	\end{equation}
	concluding the proof in the case $f \in {\rm AC}[0,t_0] \cap C^1(0,t_0]$.
	
	Now let us consider a generic $f:[0,T] \to \R$ satisfying the assumptions in the statement. However, we assume further that $t_0<T$ and that $\partial_t^\Phi f(t)$ is well-defined and continuous in $(t_0-2\delta,t_0+2\delta)$ for some $\delta>0$. We denote by $\ast$ the convolution product. Let $\{\varphi_n\}_{n \in \N}$ be a family of Friedrich's mollifiers and consider the sequence $f_n=\varphi_n \ast f$. Clearly, $f_n \to f$ uniformly in $[0,T]$ (see \cite[Lemma 1.3.3]{abhn}). In particular, consider any sequence $t_0^n \in \argmax_{t \in [0,t_0]}f_n(t)$. Since $t_0^n \in [0,t_0]$ for all $n \in \N$, we can extract a (non-relabelled) subsequence such that $t_0^n \to t_\star \in [0,t_0]$. Furthermore, $f_n(t_0^n) \ge f_n(t_0)$ and then, taking the limit as $n \to \infty$, we get $f(t_\star) \ge f(t_0)$, that guarantees that $t_0=t_\star$. Next, notice that
	\begin{equation*}
		\varphi_n \ast \mathcal{I}^\Phi f=\mathcal{I}^\Phi f_n,
	\end{equation*}
	by associativity of the convolution product. By \cite[Proposition 1.3.6]{abhn} we get
	\begin{equation*}
		\varphi_n \ast \partial_t^\Phi f=\partial_t^\Phi f_n,
	\end{equation*}
	and then, again, $\partial_t^\Phi f_n \to \partial_t^\Phi f$ uniformly in $[t_0-\delta,t_0+\delta]$. In particular, since $f_n \in C^\infty_c(\R)$, we already know that $\partial_t^\Phi f_n(t_0^n) \ge 0$. Taking the limit and using the uniform convergence, we get $\partial_t^\Phi f(t_0) \ge 0$.
	
	Now we are ready to prove the full statement. Let $f$ as requested and consider $\delta_n=\frac{f(t_0)-f(0)}{n}$ for $n \ge 2$. Clearly, $f(0)< f(t_0)-\delta_n < f(t_0)$, hence we can set $t_n=\min\{t \ge 0: \ f(t)=f(t_0)-\delta_n\}$ and it holds $t_n \in (0,t_0)$. This guarantees that $\partial_t^\Phi f$ is continuous in an open neighbourhood of $t_n$. Furthermore $t_n=\min \argmax_{t \in [0,t_n]}f(t)$. Indeed, if $t_n^\prime=\min \argmax_{t \in [0,t_n]}f(t)<t_n$, then $f(t_n^\prime) \ge f(t_n)=f(t_0)-\delta_n>f(0)$ and then, by the intermediate value theorem, we know that there exists $t_n^{\prime \prime} \in [0,t_n^\prime]\subset [0,t_n)$ such that $f(t_n^{\prime \prime})=f(t_0)-\delta_n$ that is a contradiction with the definition of $t_n$. Furthermore, since $t_n \in [0,t_0]$, there exists a (non-relabelled) subsequence such that $t_n \to t^\star$. By continuity, we must have $f(t^\star)=f(t_0)$, hence, by definition of $t_0$, $t^\star=t_0$. However, since $\partial_t^\Phi f$ is continuous in $(0,t_n+\varepsilon)$ for some $\varepsilon>0$, we have, by the previous argument, $\partial_t^\Phi f(t_n) \ge 0$. Taking the limit we get $\partial_t^\Phi f(t_0) \ge 0$, which ends the proof.
\end{proof}
Now we are ready to prove the weak maximum principle.

\begin{proof}[Proof of Theorem \ref{thm:weakmax}]
We argue by contradiction. Set $M_1=\max_{(t,x)\in \partial_p E}u(t,x)$ if $E$ is bounded and $M_1=\max\left\{\max_{t \in [0,t]}u_\infty(t), \max_{(t,x)\in \partial_p E}u(t,x)\right\}$ if $E$ is unbounded. Suppose there exists $(t_0,x_0) \in E$ such that $u(t_0,x_0)>M_1$, define $\varepsilon=u(t_0,x_0)-M_1>0$ and consider the auxiliary function
\begin{equation}
	w(t,x)=u(t,x)+\frac{\varepsilon}{2}\frac{T-t}{T} \ \forall (t,x)\in \overline{E}
\end{equation}
Notice that for $x \in E_2$ it holds
\begin{equation} 
	w(t(x),x)=u(t(x),x)+\frac{\varepsilon}{2}\frac{T-t(x)}{T}.
\end{equation} 
Now we extend $w$ to the whole cylinder $[0,T] \times E_2$ by setting, for $(t,x) \not \in E$, $w(t,x)=w(t(x),x)$, so that $w(\cdot,x)$ is continuous on $[0,T]$ for any $x \in E_2$.  Observe also that if $E$ is unbounded $\lim_{x \to \infty}w(t,x)=u_\infty(t)+\frac{\varepsilon}{2}\frac{T-t}{T}=:w_\infty(t)$ uniformly with respect to $t \in [0,T]$.

Since $\frac{T-t}{T},\frac{T-t(x)}{T}<1$ for all $(t,x) \in [0,T]\times E_2$, we have that
\begin{equation}\label{eqw}
	w(t,x)\le u(t,x)+\frac{\varepsilon}{2} \quad \forall (t,x) \in  [0,T] \times \, E_2.
\end{equation}
Specifically, in $(t_0,x_0)$ it holds
\begin{equation}
	w(t_0,x_0)=u(t_0,x_0)+\frac{\varepsilon}{2}\frac{T-t_0}{T}\ge u(t_0,x_0)=\varepsilon+M_1
\end{equation}
For any $(t,x) \in \partial_pE$ we have $M_1 \ge u(t,x)$ and then, by  \eqref{eqw},
\begin{equation}
	w(t_0,x_0)\ge \varepsilon+M_1\ge \varepsilon +u(t,x)\ge \frac{\varepsilon}{2}+w(t,x), \quad \forall (t,x)\in \partial_p E.
\end{equation}
Furthermore, if $E$ is unbounded,
\begin{equation}
	w(t_0,x_0)\ge \varepsilon+M_1\ge \varepsilon +u_\infty(t)\ge \frac{\varepsilon}{2}+w_\infty(t), \quad \forall t\in [0,T].
\end{equation}
Now let us distinguish among two cases. If $E$ is bounded, since $w \in \btrev{C}(\overline{E})$, we know there exists a point $(t_1,x_1) \in \overline{E}$ such that $w(t_1,x_1)=\max_{(t,x) \in \overline{E}}w(t,x)$ and $t_1=\min \argmax_{t \in [0,T]}w(t,x_1)$. Furthermore, $w(t_1,x_1) \ge w(t_0,x_0)>w(t,x)$ for all $(t,x) \in \partial_p E$, hence $(t_1,x_1) \in E^\ast$. If $E$ is unbounded then there exists a sequence $(t^n,x^n)$ such that $\lim_{n \to \infty}w(t^n,x^n)=\sup_{(t,x) \in E}w(t,x)$. We can always extract a (non-relabelled) subsequence $(t^n,x^n)$ such that $|x^n|$ is monotone. If $|x^n| \uparrow +\infty$, then we can extract a (non-relabelled) subsequence $(t^n,x^n)$ such that $|x^n| \uparrow +\infty$ and $t^n \to t^\star \in [0,T]$ so that 
\begin{equation*}
\sup_{(t,x) \in E}w(t,x)=\lim_{n \to \infty}w(t^n,x^n)=w_\infty(t^\star)<w(t_0,x_0),	
\end{equation*}
that is absurd. Hence either $|x^n|$ is non-increasing or non-decreasing and bounded. Again, we can extract a (non-relabelled) subsequence $(t^n,x^n)$ converging towards a point $(t^\star,x_1) \in \overline{E}$ such that $w(t^\star,x_1)=\max_{(t,x) \in \overline{E}}w(t,x)$. If we set $t_1=\min \argmax_{t \in [0,T]} w(t,x_1)$, we still have $w(t_1,x_1)=\max_{(t,x) \in \overline{E}}w(t,x)$. Furthermore, arguing as before, $(t_1,x_1) \in E^\ast$. 

Since $w(t_1,\cdot) \in C^2(E_2(t_1))$, we have
\begin{equation}
	\begin{cases}
		\nabla w(t_1,x_1)=0,\\
		\Delta w(t_1,x_1)\le 0.
	\end{cases}
\end{equation}
Moreover, observe that
\begin{equation}\label{eq:A46}
	\partial_t^\Phi u(t,x)=\partial_t^\Phi w(t,x)+\frac{\varepsilon}{2T}I_\Phi(t), \, \forall (t,x) \in E.
\end{equation}
Hence, if $t(x_1)=0$, then $\partial_t^\Phi w(\cdot,x_1) \in C(0,t_1] \cap L^1[0,t_1]$ and then by Lemma \ref{lemma34} we have 
\begin{equation}\label{eq:ineqpart}
	\partial_t^\Phi w(t_1,x_1) \ge 0.
\end{equation}
If $t(x_1)\not =0$, notice that setting $\widetilde{w}(t)=w(t+t(x_1),x_1)$, it is clear that $\partial_t^\Phi w(t,x_1)=\partial_t^\Phi \widetilde{w}(t-t(x_1))$. Again \eqref{eq:A46} guarantees that $\partial_t^\Phi \widetilde{w}(\cdot) \in C(0,t_1-t(x_1)] \cap L^1[0,t_1-t(x_1)]$ and $(t_1-t(x_1))=\min\argmax_{t \in [0,t_1-t(x_1)]}\widetilde{w}(t)$, hence by Lemma \ref{lemma34} we get again \eqref{eq:ineqpart}.
Finally, we get
\begin{equation}
	\partial_t^\Phi u(t_1,x_1)-Lu(t_1,x_1)=\partial_t^\Phi w(t_1,x_1)+\frac{\varepsilon}{2T}I(t_1)-p_2(x_1)\Delta w(t_1,x_1)>0
\end{equation}
which is absurd.
\end{proof}

\subsection{Proof of Proposition \ref{prop:regx}}\label{Appreg}
Let $(t_n,x_n,y_n) \to (t,x,y)$ in $\R^+ \times \R^2$. Notice that
\begin{equation*}
	p_\Phi(t_n,x_n;y_n)=\E_0\left[p(L(t_n),x_n;y_n)\right].
\end{equation*}
Furthermore, there exists $t_0>0$ such that $t_n,t \ge t_0$ for all $n \in \N$ and then
\begin{equation*}
	p(L(t_n),x_n;y_n) \le \frac{1}{\sqrt{2\pi L(t_0)}}
\end{equation*}
where the right-hand side has finite expectation \textcolor{black}{by \eqref{Up}}. Hence, by dominated convergence theorem, we have that $p_\Phi \in C(\R^+ \times \R^2)$. Now consider a compact $K \subset \R^2 \setminus {\sf diag}(\R^2)$ and let $\varepsilon=\min_{(x,y) \in K}|x-y|$. Then it holds
\begin{equation}\label{eq:preconv}
	\sup_{(x,y) \in K}p_\Phi(t,x;y) \le \E_0\left[p(L(t),\varepsilon;0)\right].
\end{equation}
Recalling that, for all $r, \lambda>0$,
\begin{equation*}
	\sqrt{r}e^{-\lambda r} \le \frac{1}{\sqrt{2\lambda e}},
\end{equation*}
we can use the dominated convergence theorem to ensure that the right-hand side of \eqref{eq:preconv} converges to $0$, hence getting that
\begin{equation*}
	\lim_{t \downarrow 0}p_\Phi(t,x;y)=0 \mbox{ locally uniformly in } \R^2 \setminus {\sf diag}(\R^2). 
\end{equation*}
Next, consider any function $f \in C_{\rm b}(\R)$ and observe that, by a simple application of Fubini's theorem,
\begin{equation*}
	\int_{\R}p_\Phi(t,x;y)f(y)\, dy= \E_0\left[\int_{\R}p(L(t),x;y)f(y)\, dy\right].
\end{equation*}
Notice that
\begin{equation*}
	\int_{\R}p(L(t),x;y)\left|f(y)\right|\, dy \le \Norm{f}{\infty}
\end{equation*}
hence by dominated convergence theorem we have
\begin{equation*}
	\lim_{t \downarrow 0}\int_{\R}p_\Phi(t,x;y)f(y)\, dy=\E_0\left[\lim_{t \downarrow 0}\int_{\R}p(L(t),x;y)f(y)\, dy\right]=f(x),
\end{equation*}
where we also used that $p(t,x;y)\, dy \to \delta_x(dy)$ weakly as $t\downarrow 0$.

Now recall that
\begin{equation*}
	\partial_x p(t,x;y)=-\frac{1}{\sqrt{2\pi t^3}}(x-y)e^{-\frac{(x-y)^2}{2t}}.
\end{equation*}
Fix $(t,x,y) \in \R^+ \times (\R^2 \setminus {\sf diag}(\R^2))$ and let $\delta>0$ be such that $y \not \in [x-\delta,x+\delta]$. Let also $\varepsilon=\max\{x-y-\delta, y-x-\delta\}$. Since, for all $r,\lambda>0$,
\begin{equation*}
	re^{-\lambda r} \le \frac{1}{\lambda e},
\end{equation*}
we have, for $z \in [x-\delta,x+\delta]$,
\begin{equation*}
	\left|\partial_x p(L(t),z;y)\right|\le \sqrt{\frac{2}{ e^2 \pi L(t)}}|z-y|^{-1} \le \varepsilon^{-1}\sqrt{\frac{2}{ e^2 \pi L(t)}},
\end{equation*}
where we used the fact that $|z-y|>\varepsilon$. Notice that the right-hand side has finite expectation. Hence, by a simple application of the dominated convergence theorem, we can derive under the expectation sign in
\begin{equation*}
	p_\Phi(t,x;y)=\E_0\left[p(L(t),x;y)\right] 
\end{equation*}
to achieve
\begin{equation*}
	\partial_x p_\Phi(t,x;y)=\E_0\left[\partial_x p(L(t),x;y)\right], 
\end{equation*}
that is \eqref{derx}. 

Now let $(t_n,x_n,y_n) \to (t,x,y)$ in $\R^+ \times (\R^2 \setminus {\sf diag}(\R^2))$. Then there exist $t_0>0$ and a compact set $K \subset \R^2 \setminus {\sf diag}(\R^2)$ such that $(t_n,x_n,y_n),(t,x,y) \in [t_0,+\infty) \times K$ for all $n \in \N$. Let $\varepsilon=\min_{(x,y) \in K}|x-y|>0$ and notice that, as before
\begin{equation*}
	\left|\partial_x p(L(t_n),x_n;y_n)\right|\le \varepsilon^{-1}\sqrt{\frac{2}{ e^2 \pi L(t_0)}},
\end{equation*}
hence continuity in $(t,x,y)$ follows by dominated convergence. Next, consider a compact set $K \subset \R^2 \setminus {\sf diag}(\R^2)$ and let $\varepsilon=\min_{(x,y) \in K}|x-y|$ and $M=\max_{(x,y) \in K}|x-y|$. Then it holds
\begin{equation}\label{eq:preconvder1}
	\sup_{(x,y) \in K}\left|\partial_x p_\Phi(t,x;y)\right| \le \frac{1}{\sqrt{2\pi}}M\E_0\left[(L(t))^{-\frac{3}{2}}e^{-\frac{\varepsilon^2}{2L(t)}}\right].
\end{equation}
Recalling that, for all $r, \lambda>0$,
\begin{equation*}
	r^{\frac{3}{2}}e^{-\lambda r} \le \left(\frac{3}{2\lambda e}\right)^{\frac{3}{2}},
\end{equation*}
and then, a.s.,
\begin{equation*}
	(L(t))^{-\frac{3}{2}}e^{-\frac{\varepsilon^2}{2L(t)}} \le \left(\frac{3}{\varepsilon^2 e}\right)^{\frac{3}{2}}
\end{equation*}
we can use the dominated convergence theorem to ensure that the right-hand side of \eqref{eq:preconvder1} converges to $0$, hence getting that
\begin{equation*}
	\lim_{t \downarrow 0}\partial_x p_\Phi(t,x;y)=0 \mbox{ locally uniformly in } \R^2 \setminus {\sf diag}(\R^2). 
\end{equation*}

Concerning the second derivative, notice that
\begin{equation*}
	\partial^2_x p(t,x;y)=\frac{1}{\sqrt{2\pi t^3}}\left(\frac{(x-y)^2}{t}-1\right)e^{-\frac{(x-y)^2}{2t}}
\end{equation*}
and then
\begin{equation*}
	\left|\partial^2_x p(t,x;y)\right| \le \frac{1}{\sqrt{2\pi t^3}}\left(\frac{(x-y)^2}{t}+1\right)e^{-\frac{(x-y)^2}{2t}}
\end{equation*}
Fix $(t,x,y) \in \R^+ \times (\R^2 \setminus {\sf diag}(\R^2))$ and let $\delta>0$ be such that $y \not \in [x-\delta,x+\delta]$. Set also $\varepsilon=\max\{x-y-\delta,y-x-\delta\}$. Since, for all $r,\lambda>0$,
\begin{equation*}
	re^{-\lambda r} \le \frac{1}{\lambda e} \mbox{ and } r^2e^{-\lambda r} \le \frac{4}{\lambda^2 e^2},
\end{equation*}
we have, for $z \in [x-\delta,x+\delta]$,
\begin{equation*}
	\left|\partial^2_x p(L(t),z;y)\right|\le \varepsilon^{-2}\sqrt{\frac{2}{\pi  e^2 L(t)}}\left(1+\frac{8}{e}\right)
\end{equation*}
where the right-hand side has finite expectation. Hence, by a simple application of the dominated convergence theorem, we can derive under the expectation sign in
\begin{equation*}
	\partial_x p_\Phi(t,x;y)=\E_0\left[\partial_x p(L(t),x;y)\right] 
\end{equation*}
to achieve
\begin{equation*}
	\partial^2_x p_\Phi(t,x;y)=\E_0\left[\partial^2_x p(L(t),x;y)\right], 
\end{equation*}
that is \eqref{derx2}. 

Now let $(t_n,x_n,y_n) \to (t,x,y)$ in $\R^+ \times (\R^2 \setminus {\sf diag}(\R^2))$. Then there exist $t_0>0$ and a compact set $K \subset \R^2 \setminus {\sf diag}(\R^2)$ such that $(t_n,x_n,y_n),(t,x,y) \in [t_0,+\infty) \times K$ for all $n \in \N$. Let $\varepsilon=\min_{(x,y) \in K}|x-y|>0$ and notice that, as before
\begin{equation*}
	\left|\partial^2_x p(L(t_n),x_n;y_n)\right|\le \varepsilon^{-2}\sqrt{\frac{2}{\pi  e^2 L(t_0)}}\left(1+\frac{8}{e}\right),
\end{equation*}
hence continuity in $(t,x,y)$ follows by dominated convergence. Next, consider a compact set $K \subset \R^2 \setminus {\sf diag}(\R^2)$ and let $\varepsilon=\min_{(x,y) \in K}|x-y|$ and $M=\max_{(x,y) \in K}|x-y|$. Then it holds
\begin{equation}\label{eq:preconvder}
	\sup_{(x,y) \in K}\left|\partial^2_x p_\Phi(t,x;y)\right| \le \frac{1}{\sqrt{2\pi}}\E_0\left[\left(M^2(L(t))^{-\frac{5}{2}}+L(t)^{-\frac{3}{2}}\right)e^{-\frac{\varepsilon^2}{2L(t)}}\right].
\end{equation}
Recalling that, for all $r, \lambda>0$,
\begin{equation*}
	r^{\frac{3}{2}}e^{-\lambda r} \le \left(\frac{3}{2\lambda e}\right)^{\frac{3}{2}} \mbox{ and } r^{\frac{5}{2}}e^{-\lambda r} \le \left(\frac{5}{2\lambda e}\right)^{\frac{5}{2}},
\end{equation*}
and then, a.s.,
\begin{equation*}
	\left(M^2(L(t))^{-\frac{5}{2}}+L(t)^{-\frac{3}{2}}\right)e^{-\frac{\varepsilon^2}{2L(t)}} \le M^2\left(\frac{5}{\varepsilon^2 e}\right)^{\frac{5}{2}}+\left(\frac{3}{\varepsilon^2 e}\right)^{\frac{3}{2}}
\end{equation*}
we can use the dominated convergence theorem to ensure that the right-hand side of \eqref{eq:preconvder} converges to $0$, hence getting that
\begin{equation*}
	\lim_{t \downarrow 0}\partial^2_x p_\Phi(t,x;y)=0 \mbox{ locally uniformly in } \R^2 \setminus {\sf diag}(\R^2). 
\end{equation*}

Now we prove \eqref{diffpPhi} under the additional assumption \eqref{orcond}. Assume that $x \not = y $ and consider a compact set $[0,T] \subset (0,+\infty)$ and let $t \in [0,T]$ and $s \in (0,+\infty)$. By \eqref{upbound} we have that
\begin{equation}\label{eq:predertime}
p(s,x;y)\left|\partial_t f_{L}(s,t)\right| \le \frac{I_\Phi(T)}{\pi}p(s,x;y)\int_0^{+\infty}\xi e^{-s\Re(\Psi(\xi))}\, d\xi=:I_1(s,x;y),
\end{equation}
where the right-hand side is independent of $t \in [0,T]$. Next, notice that
\begin{equation}\label{eq:I1sxyint}
	\int_0^{+\infty} I_1(s,x;y)\, ds = \frac{I_\Phi(T)}{\sqrt{2\pi^3}}\int_0^{+\infty}\int_0^{+\infty}s^{-\frac{1}{2}}e^{-\frac{(x-y)^2}{2s}-s\Re(\Psi(\xi))}\xi\, ds \, d\xi.
\end{equation}
Let us first evaluate the inner integral. To do this, rewrite
\begin{multline*}
	\int_0^{+\infty}s^{-\frac{1}{2}}e^{-\frac{(x-y)^2}{2s}-s\Re(\Psi(\xi))}ds\\=2\left(\frac{|x-y|}{\sqrt{2\Re(\Psi(\xi))}}\right)^{\frac{1}{2}}\frac{1}{2}\left(\frac{|x-y|}{\sqrt{2\Re(\Psi(\xi))}}\right)^{-\frac{1}{2}}\int_0^{+\infty}s^{-\left(-\frac{1}{2}\right)-1}e^{-\frac{2\Re(\Psi(\xi))}{2}\left(\frac{(x-y)^2}{2\Re(\Psi(\xi))s}+s\right)}ds.
\end{multline*}
that, by \cite[Formula 3.471.9]{gradshteyn2014table}, becomes
\begin{equation}
	\int_0^{+\infty}s^{-\frac{1}{2}}e^{-\frac{(x-y)^2}{2s}-\Re(\Psi(\xi))s}ds=2\left(\frac{|x-y|}{\sqrt{2\Re(\Psi(\xi))}}\right)^{\frac{1}{2}}K_{\frac{1}{2}}(\sqrt{2\Re(\Psi(\xi))}|x-y|),
\end{equation}
where $K_\nu(\cdot)$ is the modified Bessel function of the third kind (see \cite{gradshteyn2014table}). In particular, it holds, by \cite[Formula 8.469.3]{gradshteyn2014table}
\begin{equation}
	K_{\frac{1}{2}}(\sqrt{2\Re(\Psi(\xi))}|x-y|)=\sqrt{\frac{\pi}{2\sqrt{2\Re(\Psi(\xi))}|x-y|}}e^{-|x-y|\sqrt{2\Re(\Psi(\xi))}},
\end{equation}
hence
\begin{equation}\label{notint}
	\int_0^{+\infty}s^{-\frac{1}{2}}e^{-\frac{(x-y)^2}{2s}-\Re(\Psi(\xi))s}ds=\sqrt{\frac{\pi}{\Re(\Psi(\xi))}}e^{-|x-y|\sqrt{2\Re(\Psi(\xi))}}.
\end{equation}
Going back to \eqref{eq:I1sxyint} we have
\begin{equation}\label{eq:I1sxyint2}
	\int_0^{+\infty} I_1(s,x;y)\, ds = \frac{I_\Phi(T)}{\pi\sqrt{2}}\int_0^{+\infty}\frac{\xi}{\sqrt{\Re(\Psi(\xi))}}e^{-|x-y|\sqrt{2\Re(\Psi(\xi))}} \, d\xi.
\end{equation}
Next, we split the integral as
\begin{align}\label{eq:boundbrutto}
	\begin{split}
	&\int_0^{+\infty} I_1(s,x;y)\, ds = \frac{I_\Phi(T)}{\pi\sqrt{2}}\left(\int_0^{1}\frac{\xi}{\sqrt{\Re(\Psi(\xi))}}e^{-|x-y|\sqrt{2\Re(\Psi(\xi))}} \, d\xi \right.\\
	&\quad \left.+\int_1^{M_\gamma}\frac{\xi}{\sqrt{\Re(\Psi(\xi))}}e^{-|x-y|\sqrt{2\Re(\Psi(\xi))}} \, d\xi +\int_{M_\gamma}^{+\infty}\frac{\xi}{\sqrt{\Re(\Psi(\xi))}}e^{-|x-y|\sqrt{2\Re(\Psi(\xi))}} \, d\xi\right)\\
	&\le \frac{I_\Phi(t_1)}{\pi\sqrt{2}}\left(\frac{1}{\sqrt{C_0}}+\frac{M_\gamma(M_\gamma-1)}{\sqrt{\min_{\xi \in [1,M_\gamma]}\Re(\Psi(\xi))}} +\frac{1}{\sqrt{C_\gamma}}\int_{0}^{+\infty}\xi^{\frac{\gamma}{2}}e^{-2|x-y|\xi^{1-\frac{\gamma}{2}}} \, d\xi\right)<\infty,	
	\end{split}
\end{align}
where we used Lemma \ref{lemmaintzero} and the fact that $\Re(\Psi(\xi))$ is continuous in $\xi$ and $\Re(\Psi(\xi))>0$ for $\xi>0$. Hence, observing that, for $t \in [t_0,t_1]$ and fixed $x,y$ such that $x\not=y$, we proposed an upper bound in \eqref{eq:predertime} that belongs to $L^1(\R^+)$, by a simple application of the dominated convergence theorem, we achieve \eqref{diffpPhi}. 

Concerning the continuity of the function $\partial_t p_\Phi(t,x;y)$ we proceed as follows. Consider a vector $(t,x;y) \in \R^+ \times (\R^2 \setminus {\sf diag}(\R^2))$ and $\{(t_n,x_n,y_n)\}_{n \in \N}\subset \R^+ \times (\R^2 \setminus {\sf diag}(\R^2))$ such that $(t_n,x_n,y_n) \to (t,x,y)$. Clearly there exists a compact $K \subset \R^2 \setminus {\sf diag}(\R^2)$ and $T>0$ such that $(x_n,y_n) \in K$ and $t_n \in [0,T]$ for all $n \in \N$. Let $\varepsilon=\min_{(z_1,z_2) \in K}|z_1-z_2|>0$. Then we have, by \eqref{upbound}
\begin{equation*}
	p(s,x_n;y_n)|\partial_t f_L(s;t_n)| \le \frac{I_\Phi(T)}{\pi}p(s,\varepsilon;0)\int_0^{+\infty}\xi e^{-s\Re(\Psi(\xi))}\, d\xi=:I_1(s,\varepsilon;0).
\end{equation*}
as in \eqref{eq:predertime}. Since we already know that $I_1(\cdot,\varepsilon;0) \in L^1(\R^+)$, we can use the dominated convergence theorem in \eqref{diffpPhi} to guarantee that $\lim_{n \to \infty}\partial_t p_\Phi(t_n,x_n;y_n)=p_\Phi(t,x;y)$. Since $(t,x;y) \in \R^+ \times (\R^2 \setminus {\sf diag}(\R^2))$ is arbitrary, this proves that $\partial_t p_\Phi \in C(\R^+ \times (\R^2 \setminus {\sf diag}(\R^2)))$.

Finally, in order to show \eqref{eq:locunifpartpphi} consider a compact $K \subset \R^2 \setminus {\sf diag}(\R^2)$ and let $\varepsilon=\min_{(x,y) \in K}|x-y|$. Then
\begin{equation}\label{eq:upboundsuppart2}
	\sup_{(x,y) \in K}|\partial_t p_\Phi(t,x;y)| \le \int_0^{+\infty} p(s,\varepsilon;0)|\partial_t f_L(s;t)|\, ds.
\end{equation}
Let $T>0$ and notice that for $t \le T$ it holds
\begin{equation*}
	p(s,\varepsilon;0)|\partial_t f_L(s;t)| \le I_1(s,\varepsilon;0)
\end{equation*}
where the right-hand side belongs to $L^1(\R^+)$. Hence, by dominated convergence theorem and \eqref{eq:upboundsuppart2}, we get
\begin{equation}\label{eq:upboundsuppart}
	\lim_{t \downarrow 0}\sup_{(x,y) \in K}|\partial_t p_\Phi(t,x;y)|=0.
\end{equation}

Now observe that for $x \not = y$ we have that $p_\Phi(\cdot,x;y) \in C^1(\R_0^+)$, once we set $\partial_t p_\Phi(0,x;y)=0$. Hence, by Lemma \ref{lem:derdentro} and \eqref{diffpPhi}, it holds
\begin{equation}\label{eq:derphiphi1}
	\partial_t^\Phi p_\Phi(t,x;y)=\int_0^t \overline{\nu}(t-s)\partial_tp_\Phi(s,x;y)\, ds=\int_0^t \overline{\nu}(t-s)\left(\int_0^{+\infty}p(\tau,x;y)\partial_t f_L(\tau;s)\, d\tau\right) \, ds.
\end{equation}
Notice that, by \eqref{upbound} and \eqref{notint},
\begin{align*}
	\int_0^t &\int_0^{+\infty} \overline{\nu}(t-s)p(\tau,x;y)\left|\partial_t f_L(\tau;s)\right|\, d\tau \, ds \\
	 &\le \frac{1}{\pi}\int_0^t \int_0^{+\infty}\int_0^{+\infty} \overline{\nu}(t-s)p(\tau,x;y)I_\Phi(s)\xi e^{-\tau \Re(\Psi(\xi))}\, d\xi \, d\tau \, ds\\
	 &=\frac{1}{\sqrt{2\pi^2}}\left(\int_0^{+\infty} \frac{\xi}{\sqrt{\Re(\Psi(\xi))}}e^{-|x-y|\sqrt{2 \Re(\Psi(\xi))}}\, d\xi\right)\left(\int_0^t \overline{\nu}(t-s)I_\Phi(s) \, ds\right)<\infty,
\end{align*}
where the finiteness of the first integral follows as in \eqref{eq:boundbrutto}. Hence we can use Fubini's theorem in \eqref{eq:derphiphi1} to achieve
\begin{equation*}
	\partial_t^\Phi p_\Phi(t,x;y)=\int_0^{+\infty}p(\tau,x;y)\left(\int_0^t \overline{\nu}(t-s)\partial_t f_L(\tau;s)\, ds\right) \, d\tau.
\end{equation*}
Again, setting $\partial_t f_L(\tau;0)=0$ for $\tau>0$, we know that $f_L(\tau;\cdot) \in C^1(\R^+_0)$ and we can use again Lemma \ref{lem:derdentro} and Proposition \ref{prop:fL} to get
\begin{equation*}
	\partial_t^\Phi p_\Phi(t,x;y)=\int_0^{+\infty}p(\tau,x;y)\partial^\Phi_t f_L(\tau;t) \, d\tau=-\int_0^{+\infty}p(s,x;y)\partial_s f_L(s;t) \, ds.
\end{equation*}

To show that $\partial_t^\Phi p_\Phi \in C(\R^+ \times (\R^2 \setminus {\sf diag}(\R^2)))$ fix $(t,x,y) \in \R^+ \times (\R^2 \setminus {\sf diag}(\R^2))$ and consider a sequence $\{(t_n,x_n,y_n)\}_{n \in \N}\subset\R^+ \times (\R^2 \setminus {\sf diag}(\R^2))$ such that $(t_n,x_n,y_n) \to (t,x,y)$. Then there exist a compact $K \subset \R^2 \setminus {\sf diag}(\R^2)$ and $T>0$ such that $t_n \in [0,T]$ and $(x_n,y_n) \in K$ for all $n \in \N$. Let also $\varepsilon=\min_{(z_1,z_2) \in K}|z_1-z_2|$. By \eqref{upbound2} we have
\begin{equation*}
	\left|p(s,x;y)\partial_s f_L(s;t)\right| \le I_2(s,\varepsilon;0),
\end{equation*}
where
\begin{align}
	I_2(s,x;y)&= \frac{p(s,x;y)I_\Phi(T)(\sqrt{2}J_1(s)+J_2(s))}{\pi}, \label{I2}\\
	J_1(s)&=\int_{0}^{M_\gamma}|\Psi(\xi)|e^{-s\Re(\Psi(\xi))}\, d\xi \label{J1g}\\
	J_2(s)&=\int_{M_\gamma}^{+\infty}\left(3\overline{\nu}(1)+\xi \int_0^1 \tau \nu(d\tau)+\frac{\xi^2}{2}\int_0^1 \tau^2\nu(d\tau)\right)e^{-s\xi^{2-\gamma}}\, d\xi\label{J2g}.
\end{align}
Now we need to show that $I_2(\cdot,\varepsilon;0) \in L^1(\R^+)$. To do this, notice that
\begin{equation*}
	\int_0^{+\infty}p(s,\varepsilon;0)J_1(s)\, ds \le M_\gamma \sup_{\xi \in [0,M_\gamma]}|\Psi(\xi)|\int_0^{+\infty}\frac{1}{\sqrt{2\pi s}}e^{-\frac{\varepsilon^2}{2s}}\, ds<\infty.
\end{equation*}
Concering $J_2(s)$, first notice that there exists a constant $C>0$ such that, for $\xi \ge M_\gamma$,
\begin{equation*}
	3\overline{\nu}(1)+\xi \int_0^1 \tau \nu(d\tau)+\frac{\xi^2}{2}\int_0^1 \tau^2\nu(d\tau) \le C \xi^2.
\end{equation*}
Furthermore, we have, arguing as in \eqref{notint},
\begin{align*}
	\int_0^{+\infty}p(s,\varepsilon;0)J_2(s)\, ds &\le C\int_{M_\gamma}^{+\infty}\frac{\xi^2}{\sqrt{2\pi}}\left(\int_0^{+\infty}s^{-\frac{1}{2}}e^{-s\xi^{2-\gamma}-\frac{\varepsilon^2}{2s}}\, ds\right) \, d\xi\\
	& \le \frac{C}{\sqrt{2}}\int_{0}^{+\infty}\xi^{1+\frac{\gamma}{2}}e^{-\varepsilon \sqrt{2\xi^{2-\gamma}}} \, d\xi<\infty.
\end{align*}
Hence $I_2(\cdot,\varepsilon;0) \in L^1(\R^+)$ and then $\lim_{n \to \infty}\partial_t^\Phi p_\Phi(t_n,x_n;y_n)=\partial_t^\Phi p_\Phi(t,x;y)$ by dominated convergence theorem. 

Finally, consider a compact set $K \subset \R^2 \setminus {\sf diag}(\R^2)$ and let $\varepsilon=\min_{(x,y) \in K}|x-y|$. Then it holds
\begin{equation*}
	\sup_{(x,y) \in K}\left|\partial_t^\Phi p_\Phi(t,x;y)\right| \le \frac{I_\Phi(t)}{\pi} \int_0^{+\infty}p(s,\varepsilon;0)(\sqrt{2}J_1(s)+J_2(s))\, ds
\end{equation*}
where the right-hand side clearly goes to $0$ as $t \downarrow 0$.
\qed
\subsection{Proof of Proposition \ref{prop:limitseconder}} \label{AppLimits}
\textcolor{black}{We will prove the statement under the milder assumptions of Remark \ref{rmk:lessrestrict}.} Since $p_\Phi(t,x;y)=p_\Phi(t,x-y;0)$, it is sufficient to prove the statement for $y=0$. Recall that, by \eqref{derx}, we have
\begin{equation}
	\partial_x p_\Phi(t,x;0)=-\frac{1}{\sqrt{2\pi}}\int_0^{+\infty}\frac{xe^{-\frac{x^2}{2s}}}{s^\frac{3}{2}}f_L(s;t)ds.
\end{equation}
%
%
First consider $x>0$. By using the change of variables $z=\frac{x}{\sqrt{2s}}$ {for $x>0$}, {the latter becomes}
\begin{equation}
	\partial_x p_\Phi(t,x;0)=-\frac{2}{\sqrt{\pi}}\int_0^{+\infty}e^{-z^2}f_L\left(\frac{x^2}{2z^2};t\right)dz.
\end{equation}
Notice further that
\begin{equation}
	\overline{\nu}(t)+\partial_x p_\Phi(t,x;0)=\frac{2}{\sqrt{\pi}}\int_0^{+\infty}e^{-z^2}\left(\overline{\nu}(t)-f_L\left(\frac{x^2}{2z^2};t\right)\right)dz.
\end{equation}
Now consider a compact set $K \subset \R^+$ and notice that
\begin{equation}\label{eq:estprelim}
	\sup_{t \in K}\left|\overline{\nu}(t)+\partial_x p_\Phi(t,x;0)\right|\le \frac{2}{\sqrt{\pi}}\int_0^{+\infty}e^{-z^2}\sup_{t \in K}\left|\overline{\nu}(t)-f_L\left(\frac{x^2}{2z^2};t\right)\right|dz.
\end{equation}
By Assumption \eqref{ass4} we know that
\begin{equation*}
	\lim_{s \downarrow 0}\sup_{t \in K}\left|\overline{\nu}(t)-f_L\left(s;t\right)\right|=0.
\end{equation*}
Furthermore, by Proposition \ref{derfL}, we know that $\lim_{s \to +\infty}\sup_{t \in K}\left|f_L\left(s;t\right)\right|=0$. Hence, there exists a constant $C>0$ such that
\begin{equation*}
	\sup_{t \in K}\left|\overline{\nu}(t)-f_L\left(s;t\right)\right| \le C, \ \forall s \ge 0.
\end{equation*} 
Hence, by dominated convergence theorem and \eqref{eq:estprelim} we achieve
\begin{equation*}
	\lim_{x \downarrow 0}\sup_{t \in K}\left|\overline{\nu}(t)+\partial_x p_\Phi(t,x;0)\right|=0.
\end{equation*}
The argument for $x<0$ is analogous.

Now recall that, by \eqref{diffPhipPhi}, for $x \not = 0$ it holds
\begin{equation*}
	\partial_t^\Phi p_\Phi(t,x;0)=-\int_{0}^{+\infty}p(s,x;0)\partial_s f_L(s,t)\, ds.
\end{equation*}
First, notice that by Proposition \ref{derfL} we know that $\lim_{s \to \infty}s^2\left|\partial_s f_L(s,t)\right|=0$ locally uniformly with respect to $t \in (0,+\infty)$. Now fix a compact set $K \subset (0,+\infty)$ and let $\delta>0$ and $h:(0,\delta) \to \R^+_0$ as in Assumption \eqref{ass4}. Then we know that there exists a constant $C>0$ such that
\begin{equation*}
	\sup_{t \in K}|\partial_s f_L(s,t)| \le Cs^{-2}, \ s \ge \delta
\end{equation*}
and then
\begin{equation}\label{eq:controlpdfLs}
	\int_0^{+\infty}s^{-\frac{1}{2}}\sup_{t \in K}|\partial_ s f_L(s,t)|\, ds \le \int_0^{\delta}s^{-\frac{1}{2}}h(s)\, ds+\int_{\delta}^{+\infty}s^{-\frac{5}{2}}\, ds<\infty.
\end{equation}
Now notice that
\begin{align*}
	\sup_{t \in K}\left|\partial_t^\Phi p_\Phi(t,x;0)+\frac{1}{\sqrt{2\pi}}\int_0^{+\infty}s^{-\frac{1}{2}}\partial_sf_L(s,t)\, ds\right| \le \frac{1}{\sqrt{2\pi}}\int_{0}^{+\infty}\left(1-e^{-\frac{x^2}{2s}}\right)s^{-\frac{1}{2}}\sup_{t \in K}\left|\partial_s f_L(s,t)\right|\, ds.
\end{align*}
The integrand can be controlled as follows
\begin{equation*}
	\left(1-e^{-\frac{x^2}{2s}}\right)s^{-\frac{1}{2}}\sup_{t \in K}\left|\partial_s f_L(s,t)\right| \le s^{-\frac{1}{2}}\sup_{t \in K}\left|\partial_s f_L(s,t)\right|,
\end{equation*}
where the right-hand side is integrable, hence by dominated convergence we have
\begin{equation}\label{eq:uniformconvat0}
	\lim_{x \to 0}\sup_{t \in K}\left|\partial_t^\Phi p_\Phi(t,x;0)+\frac{1}{\sqrt{2\pi}}\int_0^{+\infty}s^{-\frac{1}{2}}\partial_sf_L(s,t)\, ds\right|=0.
\end{equation}
Finally,
\begin{equation*}
	\lim_{x \to 0}\sup_{t \in K}\left|\partial_x^2 p_\Phi(t,x;0)+\frac{1}{\sqrt{2\pi}}\int_0^{+\infty}s^{-\frac{1}{2}}\partial_s f_L(s,t)\, ds\right|=0 \ \mbox{ for any compact }K\subset \R^+
\end{equation*}
follows by \eqref{eq:uniformconvat0} and Theorem \ref{lemmaeqfraz}. \qed
\subsection{Proof of Proposition \ref{prop:secder}}\label{Appsecder}
\btrev{
	Let us first observe that if we define
	\begin{equation*}
		v(t,x)=\int_{\R}f(y)p(t,x;y)\, dy,
	\end{equation*}
	that is a bounded and continuous function in $\R^+_0 \times \R$, then we have, by a simple application of Fubini's theorem,
	\begin{equation*}
		u(t,x)=\E[v(L_\Phi(t),x)], \ (t,x) \in \R_0^+ \times \R.
	\end{equation*}
	Thus, since $L_\Phi$ is a.s. continuous, $u \in C(\R^+_0 \times \R)$.}
	 
\btrev{	Now we move to the derivatives. Fix $x \in \R$ and $t>0$. Let $\chi \in C^\infty_{\sf c}(\R)$ be such that $\chi(z)=1$ for all $z \in \left[-\frac{1}{2},\frac{1}{2}\right]$, $\chi(z)=0$ for all $z \not \in [-1,1]$ and, in general, $\chi(z) \in [0,1]$. Let also $L=\Norm{\chi^\prime}{L^\infty(\R)}$. Consider $f \in {\sf C}_b(\R)$ and write
	\begin{align}\label{eq:uu1u2}
		\begin{split}
		u(t,x)&=\int_{\R}f(y)p_\Phi(t,x;y)\, dy\\
		&=\int_{\R}f(y)\chi(x-y)p_\Phi(t,x;y)\, dy-\int_{\R}f(y)(\chi(x-y)-1)p_\Phi(t,x;y)\, dy\\
		&=u_1(t,x)-u_2(t,x).
		\end{split}
	\end{align}
	Let us first determine $\partial_x u_1(t,x)$ and show that $\partial_x u_1 \in C(\R^+ \times \R)$. To do this, let $I_x:=[x-1,x+1]$ and $I_x^\prime:=[x-2,x+2]$. Recall that, by Proposition \ref{prop:regx}, $\partial_x p_\Phi \in C(\R^+ \times (\R^2 \setminus {\sf diag}(\R^2)))$ and notice that by \eqref{eq:limtder1} we have that for any $0<t_0<t_1$ and any compact set $K\subset \R^2$ it holds
	\begin{equation}\label{eq:upboundpartialpPhi}
		M:=\sup_{(t,z,y) \in [t_0,t_1]\times I_x \times I_x^\prime}|\partial_x p_\Phi(t,z;y)|<\infty.
	\end{equation}
	Now consider $h \in (-1,1)$ and define the incremental ratios
	\begin{equation}\label{eq:incrementalratio}
		\mathcal{R}^\Phi_h(t,x;y):=\frac{p_\Phi(t,x+h;y)-p_\Phi(t,x;y)}{h}, \quad \mathcal{R}^\chi_h(t,x;y):=\frac{\chi(x+h-y)-\chi(x-y)}{h}
	\end{equation}
	and observe that $x+h \in I_x$. Then the incremental ratio
	\begin{equation}\label{eq:incrratio2}
		\mathcal{R}_h(t,x;y):=\chi(x+h-y)\mathcal{R}^\Phi_h(t,x;y)+p_\Phi(t,x;y)\mathcal{R}^\chi_h(t,x;y).
	\end{equation}
	 Let us provide an upper bound for $\mathcal{R}^\Phi_h(t,x;y)$ as $y \in I^\prime_x$. To do this, notice that we have shown in Proposition \ref{prop:regx} that $\partial_x p_\Phi \in C(\R^+ \times (\R^2 \setminus {\sf diag}(\R^2)))$ and, in particular, $p_\Phi(t,\cdot;y)$ admits derivative almost everywhere. Furthermore, by \eqref{eq:upboundpartialpPhi}, we know that for fixed $t>0$ and $y \in I_x$, 
	\begin{equation*}
		\sup_{z \in I_x}\left|\partial_x p_\Phi(t,z;y)\right| \le M.
	\end{equation*}
	As a consequence, for $y \in I^\prime_x$, $p_\Phi(t,\cdot;y)$ is a Lipschitz function on $I_x$ with Lipschitz constant controlled by $M$, that only depends on $I_x, t_1, t_2$. Furthermore, notice that for any $h \in (-1,1)$, the function $f(\cdot)\chi(x+h-\cdot)$ is supported in $I^\prime_x$ and so also $f(y)\mathcal{R}_h^\chi(t,x;y)$. Moreover, as $t \in [t_1,t_2]$ and $x,y \in R$ we have
	\begin{equation*}
		p_\Phi(t,x;y) \le \frac{1}{\sqrt{2\pi}}U_{-\frac{1}{2}}(t_1),
	\end{equation*}
	hence
	\begin{equation*}
		\left|f(y)\mathcal{R}_h(t,x;y)\right| \le \Norm{f}{L^\infty(\R)}\left(M+\frac{L}{\sqrt{2\pi}}U_{-\frac{1}{2}}(t_1)\right)1_{I^\prime_x}(y),
	\end{equation*}
	where the right-hand side is integrable. Hence, by the dominated convergence theorem, 
	\begin{equation}\label{eq:deru1}
		\partial_x u_1(t,x)=\int_{R}f(y)\chi(x-y)\partial_x p_\Phi(t,x;y)\, dy+\int_{R}f(y)\partial_x\chi(x-y) p_\Phi(t,x;y)\, dy
	\end{equation}
	and the same estimates guarantee, through dominated convergence, that $\partial_x u_1 \in C(\R^+ \times \R)$. Next, let us consider $u_2$. Let $J_x=\left[x-\frac{1}{8},x+\frac{1}{8}\right]$, $J^\prime_x=\left[x-\frac{3}{8},x+\frac{3}{8}\right]$ and consider again \eqref{eq:incrementalratio}, this time with $h \in \left(-\frac{1}{8},\frac{1}{8}\right)$, and define
	\begin{equation*}
		\mathcal{R}^c_h(t,x;y)=(\chi(x+h-y)-1)\mathcal{R}^\Phi_h(t,x;y)+p_\Phi(t,x;y)\mathcal{R}^\chi_h(t,x;y).
	\end{equation*} 
	While the second summand can be controlled exactly as before, notice that the first summand has to be bounded only for $y \in \R \setminus J^\prime_x$, since $\chi(x+h-y)=1$ whenever $y \in J^\prime_x$. We observe that for $z \in J_x$ and $y \in J_x^\prime$ it holds
	\begin{equation}\label{eq:controldistance}
		|x-y|-\frac{1}{8} \le |z-y| \le |x-y|+\frac{1}{8}
	\end{equation}
	hence, for $z \in I_x$ and $y \in I_x^\prime$, we have
	\begin{align}\label{eq:aux1}
		\begin{split}
			\left|\partial_x p_\Phi(t,z;y)\right| &\le \frac{1}{\sqrt{2\pi}}\int_0^{+\infty}\frac{|z-y|e^{-\frac{(z-y)^2}{2s}}}{s^{\frac{3}{2}}}f_L(s;t)\, ds \\
			&\le \frac{1}{\sqrt{2\pi}}\int_0^{+\infty}\frac{\left(|x-y|+\frac{1}{8}\right)e^{-\frac{\left(|x-y|-\frac{1}{8}\right)^2}{2s}}}{s^{\frac{3}{2}}}f_L(s;t)\, ds. 	
		\end{split}
	\end{align}
	Now consider, for $r,s>0$, the function
	\begin{equation*}
		g_r(s)=\frac{\left(r+\frac{1}{8}\right)e^{-\frac{\left(r-\frac{1}{8}\right)^2}{2s}}}{s^{\frac{3}{2}}}.
	\end{equation*}
	One can easily check that
	\begin{equation*}
		g'_r(s)=\frac{\left(r+\frac{1}{8}\right)\left(\left(r-\frac18\right)^2-3s\right)}{2s^{\frac{7}{2}}}e^{-\frac{\left(r-\frac18\right)^2}{2s}},
	\end{equation*}
	hence $g_r(s)$ admits maximum in $3s=\left(r-\frac{1}{8}\right)^2$. As a consequence we have
	\begin{equation}\label{eq:grup}
		g_r(s) \le \sqrt{\frac{27}{e^3}}\frac{r+\frac{1}{8}}{\left(r-\frac{1}{8}\right)^3}.
	\end{equation}
	Using $r=|x-y|$ and plugging the previous inequality into \eqref{eq:aux1}, we get
	\begin{align*}
		\left|\partial_x p_\Phi(t,z;y)\right| \le \sqrt{\frac{27}{2\pi e^3}}\frac{|x-y|+\frac{1}{8}}{\left(|x-y|-\frac18\right)^3}.
	\end{align*}
	The fact that the right-hand side is integrable on $\R \setminus J^\prime_x$ easily follows from the relation 
	\begin{equation*}
		\int_{\frac{1}{4}}^{+\infty}\frac{y+2}{y^3}\, dy<+\infty.
	\end{equation*}
	In general, we have
	\begin{equation*}
		\left|f(y)\mathcal{R}^c_h(t,x;y)\right| \le \Norm{f}{L^\infty(\R)}\left(\sqrt{\frac{27}{2\pi e^3}}\frac{|x-y|+\frac{1}{8}}{\left(|x-y|-\frac18\right)^3}1_{\R \setminus J^\prime_x}(y)+\frac{L}{\sqrt{2\pi}}U_{-\frac{1}{2}}(t_1)1_{J_x}(y)\right).
	\end{equation*}
	Hence, by dominated convergence, we have
	\begin{equation}\label{eq:deru2}
		\partial_x u_2(t,x)=\int_{\R}f(y)(\chi(x-y)-1)\partial_x p_\Phi(t,x;y)\, dy+\int_{\R}f(y)\partial_x\chi(x-y) p_\Phi(t,x;y)\, dy.
	\end{equation}
	Again, the same estimates tell us that $\partial_x u_2 \in C(\R^+ \times \R)$. Plugging  \eqref{eq:deru1} and \eqref{eq:deru2} into \eqref{eq:uu1u2}, we get \eqref{eq:der1int} and $\partial_x u \in C(\R^+ \times \R)$. }
	
	\btrev{Now we want to do the same for the second derivative. Nevertheless, $\partial_xp_\Phi(t,\cdot;y)$ is not even a continuous function. To handle this problem, we define
	\begin{equation*}
		\widetilde{p}_\Phi(t,x;y)=\partial_xp_\Phi(t,x;y)+2\overline{\nu}(t)1_{[0,+\infty)}(x-y).
	\end{equation*}
	Let us first handle $u_1$. We rewrite
	\begin{align}\label{eq:der2u11}
		\begin{split}
		\partial_x u_1(t,x)&=\int_{\R}f(y)\chi(x-y)\widetilde{p}_\Phi(t,x;y)\, dy-2\nu(t)\int_{\R}f(y)\chi(x-y)1_{[0,+\infty)}(x-y)\, dy\\
		&=\int_{\R}f(y)\chi(x-y)\widetilde{p}_\Phi(t,x;y)\, dy-2\nu(t)\int_{-\infty}^xf(y)\chi(x-y)\, dy\\
		&=u_3(t,x)-2\nu(t)u_4(x).	
		\end{split}
	\end{align}
	It is clear that
	\begin{equation}\label{eq:deru4}
		\partial_x u_4(x)=f(x)+\int_{-\infty}^xf(y)\partial_x\chi(x-y)\, dy.
	\end{equation}
	On the other hand, we notice that since $\partial_x^2 p_\Phi \in C(\R^+ \times (\R^2 \setminus {\sf diag}(\R^2)))$ and \eqref{eq:limtder2} holds, then $\widetilde{p}_\Phi(t,\cdot;y) \in C^1(\R)$ with derivative
	\begin{equation*}
		\partial_x \widetilde{p}_\Phi(t,x;y)=\begin{cases} \displaystyle \partial_x^2 p_\Phi(t,x;y) & x \not = y \\
		\displaystyle -\sqrt{\frac{2}{\pi}}\int_0^{+\infty}s^{-\frac{1}{2}}\partial_s f_L(s;t)\, ds & x=y.
	\end{cases}
	\end{equation*}
	Furthermore, since the involved functions are continuous and the limits in \eqref{eq:limtder2} hold locally uniformly with respect to $t>0$, for $t \in [t_1,t_2]$, $z \in I_x$ and $y \in I^\prime_x$ we have
	\begin{equation*}
		\left|\partial_x \widetilde{p}_\Phi(t,z;y)\right|\le   \sup_{\substack{t \in [t_1,t_2] \\ z \in I_x \\ y \in I^\prime_x}}\left|\partial_x \widetilde{p}_\Phi(t,z;y)\right|:=\widetilde{M}.
	\end{equation*}
	Now we define, for $h \in (-1,1)$
	\begin{equation*}
		\widetilde{\cR}^\Phi_h(t,x;y)=\frac{\widetilde{p}_\Phi(t,x+h;y)-\widetilde{p}_\Phi(t,x;y)}{h}
	\end{equation*}
	and
	\begin{equation*}
		\widetilde{\cR}_h(t,x;y)=\chi(x+h-y)\widetilde{\cR}^\Phi_h(t,x;y)+\widetilde{p}_\Phi(t,x;y)\cR^\chi_h(t,x;y).
	\end{equation*}
	Notice that $\chi(x+h-\cdot)$ and $\cR^\chi_h(t,x;\cdot)$ is supported on $I^\prime_x$, whole $x+h \in I_x$ for any $h \in (-1,1)$. Hence we have
	\begin{equation*}
		\left|\widetilde{p}_\Phi(t,x;y)\right| \le M+\nu(t_1),
	\end{equation*}
	where $M$ is defined in \eqref{eq:upboundpartialpPhi}. Hence
	\begin{equation*}
		\left|f(y)\widetilde{\cR}_h(t,x;y)\right| \le \Norm{f}{L^\infty(\R)}1_{I^\prime_x}(y)\left(\widetilde{M}+(M+\nu(t_1))L\right),
	\end{equation*}
	where the right-hand side is integrable in $y$. Hence
	\begin{equation}\label{eq:deru3}
		\partial_x u_3(t,x)=\int_{\R}f(y)\chi(x-y)\partial^2_xp_\Phi(t,x;y)\, dy+\int_{\R}f(y)\partial_x\chi(x-y)\widetilde{p}_\Phi(t,x;y)\, dy
	\end{equation}
	and $\partial_x u_3 \in C(\R^+ \times \R)$. Plugging \eqref{eq:deru4} and \eqref{eq:deru3} into \eqref{eq:der2u11} we get
	\begin{equation}\label{eq:der2u1}
		\partial_x^2 u_1(t,x)=\int_{\R}f(y)\chi(x-y)\partial^2_xp_\Phi(t,x;y)\, dy+\int_{\R}f(y)\partial_x\chi(x-y)p_\Phi(t,x;y)\, dy-2\nu(t)f(x)-2\nu(t)\int_{-\infty}^xf(y)\partial_x \chi(x-y)\, dy
	\end{equation}
	and $\partial_x^2 u_1 \in C(\R^+ \times \R)$. }
	
\btrev{	Now we handle $u_2$. To do this, we split again $\partial_xp_\Phi(t,x;y)$ to get
	\begin{align}\label{eq:der2u21}
		\begin{split}
			\partial_x u_2(t,x)&=\int_{\R}f(y)(\chi(x-y)-1)\widetilde{p}_\Phi(t,x;y)\, dy-2\nu(t)\int_{\R}f(y)(\chi(x-y)-1)1_{[0,+\infty)}(x-y)\, dy\\
			&=\int_{\R}f(y)(\chi(x-y)-1)\widetilde{p}_\Phi(t,x;y)\, dy-2\nu(t)\int_{-\infty}^xf(y)(\chi(x-y)-1)\, dy\\
			&=u_5(t,x)-2\nu(t)u_6(x).	
		\end{split}
	\end{align}
	Again, it is clear that
	\begin{equation}\label{eq:deru6}
		\partial_x u_6(x)=\int_{-\infty}^{x}f(y)\partial_x \chi(x-y)\, dy.
	\end{equation}
	To handle $u_5(t,x)$, we define for $h \in \left(-\frac{1}{4},\frac{1}{4}\right)$
	\begin{equation*}
		\widetilde{R}_h^c(t,x;y)=(\chi(x+h-y)-1)\widetilde{\cR}^\Phi_h(t,x;y)+\widetilde{p}_\Phi(t,x;y)\cR^\chi_h(t,x;y).
	\end{equation*}
	We only need to provide an estimate of the first summand, which is $0$ except at most for $y \in \R \setminus J^\prime_x$. To do this, we notice that by \eqref{eq:controldistance} and \eqref{derx2} we have for $z \in J_x$ and $y \in \R \setminus J^\prime_x$,
	\begin{align*}
		\left|\partial^2_x p_\Phi(t,z;y)\right|&\le \frac{1}{\sqrt{2\pi}}\int_0^{+\infty}\frac{1}{ s^{\frac{3}{2}}}\left(\frac{\left(|x-y|+\frac{1}{8}\right)^2}{s}+1\right)e^{-\frac{\left(|x-y|-\frac{1}{8}\right)^2}{2s}}f_L(s;t)\, ds\\
		&=\frac{1}{\sqrt{2\pi}}\int_0^{+\infty}\frac{1}{ s^{\frac{3}{2}}}\frac{\left(|x-y|+\frac{1}{8}\right)^2}{s}e^{-\frac{\left(|x-y|-\frac{1}{8}\right)^2}{2s}}f_L(s;t)\, ds\\
		&+\frac{1}{\sqrt{2\pi}}\int_0^{+\infty}\frac{1}{ s^{\frac{3}{2}}}e^{-\frac{\left(|x-y|-\frac{1}{8}\right)^2}{2s}}f_L(s;t)\, ds.
	\end{align*}
	Now let us consider, for $r>0$, the function
	\begin{equation*}
		\widetilde{g}_r(s)=\frac{\left(r+\frac{1}{8}\right)^2}{ s^{\frac{5}{2}}}e^{-\frac{\left(r-\frac{1}{8}\right)^2}{2s}}
	\end{equation*}
	whose derivative is given by
	\begin{equation*}
		\widetilde{g}_r'(s)=\frac{\left(r+\frac{1}{8}\right)^2}{2s^{\frac{9}{2}}}e^{-\frac{\left(r-\frac{1}{8}\right)^2}{2s}}\left(\left(r-\frac{1}{8}\right)^2-5s\right)
	\end{equation*}
	and thus admits maximum in $s=\frac{1}{5}\left(r-\frac{1}{8}\right)^2$, leading to
	\begin{equation*}
		\widetilde{g}_r(s) \le \frac{\left(r+\frac{1}{8}\right)^2}{ \left(r-\frac{1}{8}\right)^{5}}\sqrt{\frac{5^5}{e^5}}.
	\end{equation*}
	Setting $r=|x-y|$ and using also \eqref{eq:grup} we finally get
	\begin{equation}\label{eq:controlsecderfar}
		\left|\partial^2_x p_\Phi(t,z;y)\right| \le \frac{\left(|x-y|+\frac{1}{8}\right)^2}{ \left(|x-y|-\frac{1}{8}\right)^{5}}\sqrt{\frac{5^5}{2\pi e^5}}+\frac{1}{ \left(|x-y|-\frac{1}{8}\right)^{3}}\sqrt{\frac{27}{2\pi e^3}}=:F(|x-y|)
	\end{equation}
	where the right-hand side is integrable in $\R \setminus J^\prime_x$ since
	\begin{equation*}
		\int_{\frac{1}{4}}^{+\infty}\left(\frac{\left(y+\frac{1}{4}\right)^2}{y^{5}}+\frac{1}{y^{3}}\right)\, dy<\infty.
	\end{equation*}
	Hence, once we observe that
	\begin{equation*}
		\left|f(y)\widetilde{\mathcal{R}}^c_h(t,x;y)\right| \le \Norm{f}{L^\infty(\R)}\left(1_{\R \setminus J^\prime_x}(y)F(|x-y|)+L(M+\nu(t_1))1_{J_x}(y)\right),
	\end{equation*}
	by dominated convergence we have
	\begin{equation}\label{eq:deru5}
		\partial_x u_5(t,x)=\int_{\R}f(y)(\chi(x-y)-1)\partial_x^2p_\Phi(t,x;y)\, dy+\int_{\R}f(y)\partial_x\chi(x-y)\widetilde{p}_\Phi(t,x;y) dy
	\end{equation}
	and with the same estimates we have that $\partial_x u_5 \in C(\R^+ \times \R)$. Using \eqref{eq:deru6} and \eqref{eq:deru5} into \eqref{eq:der2u21}, we have that $\partial_x^2 u_2 \in C(\R^+ \times \R)$ and
	\begin{equation}\label{eq:der2u2}
		\partial_x^2 u_2(t,x)=\int_{\R}f(y)\partial_x\chi(x-y)\widetilde{p}_\Phi(t,x;y)\,dy+\int_{\R}f(y)(\chi(x-y)-1)\partial_x^2p_\Phi(t,x;y)\,dy-2\nu(t)\int_{-\infty}^{x}f(y)\partial_x\chi(x-y)\, dy.
	\end{equation}
	Finally, combining \eqref{eq:der2u1} and \eqref{eq:der2u2}, we get that $\partial_x^2 u \in C(\R^+ \times \R)$ and \eqref{eq:der2int}.}
	
\btrev{	In particular, for $f=\equiv 1$, we have $u \equiv 1$
	\begin{equation*}
		0=\partial_x^2u(t,x)=\int_{\R}\partial_x^2p_\Phi(t,x;y)\, dy-2\nu(t)
	\end{equation*}
	leading to the desired result.}
\qed
\textcolor{black}{\subsection{Proof of Proposition \ref{prop:RLderpPhi}}\label{app:RLderpPhi}
	Notice that
\begin{equation}\label{eq:fracintpPhidiag}
	\mathcal{I}^{\Phi}p_\Phi(t,x;x)=\frac{1}{\sqrt{2\pi}}\int_{0}^{t}\overline{\nu}(t-\tau)\int_{0}^{+\infty}s^{-\frac{1}{2}}f_L(s;\tau)\, d\tau\, ds=\frac{1}{\sqrt{2\pi}}\int_{0}^{+\infty}s^{-\frac{1}{2}}\left(\mathcal{I}^\Phi_t f_L(s;\cdot)\right)\, ds.
\end{equation}
Next, recall that for all $s,t>0$ we have
\begin{equation*}
	\left(\mathcal{I}^\Phi_t f_L(s;\cdot)\right)(t)=-\int_0^t\partial_s f_L(s;w)\, dw.
\end{equation*}
To take the derivative inside the integral sign in \eqref{eq:fracintpPhidiag}, it is then sufficient to recall \eqref{eq:controlpdfLs} for any fixed $t>0$ and any compact neighbourhood $K$ of $t$. The latter also immediately proves \eqref{eq:boundDphi}.
\qed }
\textcolor{black}{\subsection{Proof of Proposition \ref{prop:exchangeRL}}\label{app:exchangeRL}
First observe that
\begin{equation*}
	\mathcal{I}^\Phi_t u(t,x)=\int_0^t \int_{\R} \overline{\nu}(t-\tau)f(y)p_\Phi(t,x;y)\, dy,
\end{equation*}
and we can exchange the order of the integrals since
\begin{equation*}
	\int_0^t \int_{\R} \overline{\nu}(t-\tau)\left|f(y)\right|p_\Phi(t,x;y)\, dy \le I_\Phi(t)\Norm{f}{L^\infty(\R)},
\end{equation*}
so that we have
\begin{equation*}
	\mathcal{I}^\Phi_t u(t,x)= \int_{\R}f(y)\mathcal{I}^\Phi_t p_\Phi(t,x;y)\, dy.
\end{equation*}
Now let $\chi$ as in Appendix~\ref{Appsecder}, $0<t_1<t<t_2$ and consider
\begin{align*}
	\mathcal{I}^\Phi_t u(t,x)&= \int_{\R}f(y)\chi(x-y)\mathcal{I}^\Phi_t p_\Phi(t,x;y)\, dy+\int_{\R}f(y)(1-\chi(x-y))\mathcal{I}^\Phi_t p_\Phi(t,x;y)\, dy\\
	&=:u_1(t,x)+u_2(t,x).
\end{align*}
Since we observed that $I^\Phi_t p_\Phi \in C^1(\R^+ \times \R^2)$, we notice that, for $y \in I_x:=[x-1,x+1]$ and $t \in [t_1,t_2]$ it holds
\begin{equation*}
\sup_{\substack{t \in [t_1,t_2] \\ y \in I_x}}\left|D^\Phi_tp_\Phi(t,x;y)\right|<\infty
\end{equation*}
and thus it is clear, since $f(\cdot)\chi(x-\cdot)$ is continuous and supported on $I_x$, that
\begin{equation}\label{eq:tderu1}
	\partial_t u_1(t,x)=\int_{\R}f(y)\chi(x-y)D^\Phi_t p_\Phi(t,x;y)\, dy.
\end{equation}
Concerning $u_2$, we use \eqref{eq:controlsecderfar} to show that for $y \in \R \setminus J_x$, where $J_x=\left[x-\frac{3}{8},x+\frac{3}{8}\right]$, it holds
\begin{equation*}
	\left|D^\Phi_t p_\Phi(t,z;y)\right| \le \frac{\left(|x-y|+\frac{1}{8}\right)^2}{ \left(|x-y|-\frac{1}{8}\right)^{5}}\sqrt{\frac{5^5}{8\pi e^5}}+\frac{1}{ \left(|x-y|-\frac{1}{8}\right)^{3}}\sqrt{\frac{27}{8\pi e^3}}
\end{equation*}
where the right-hand side is integrable in $\R \setminus J_x$. Hence, once we notice that $f(\cdot)(1-\chi(x-\cdot))$ is bounded and supported on $\R \setminus J_x$, we have 
\begin{equation}\label{eq:tderu2}
	\partial_t u_2(t,x)=\int_{\R}f(y)(1-\chi(x-y))D^\Phi_t p_\Phi(t,x;y)\, dy.
\end{equation}
Summing \eqref{eq:tderu1} and \eqref{eq:tderu2} we get the desired result.
\qed}
\subsection{Uniform convergence of monotone functions}\label{AppUnifConv}
\begin{prop}
	\label{uniform}
	Let $F_n,F: \R_0^+ \mapsto \R$ for any $n \in \N$ such that $F_n(t) \to F(t)$ as $n \to \infty$, for any $t \in \R_0^+$. Suppose further that $F_n, F$ are continuous, non-decreasing and $\lim_{t \to \infty} F_n(t) = \lim_{t \to \infty} F(t) = l \in \R$. Then $F_n \to F$ uniformly.
\end{prop}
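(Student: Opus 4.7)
The statement is essentially a variant of Pólya's theorem (uniform convergence of monotone functions pointwise converging to a continuous function), extended from a compact interval to $\R_0^+$ via the common limit at infinity. The plan is to fix $\varepsilon>0$, split $\R_0^+$ into a compact piece $[0,T]$ where a Pólya-type argument works, and a tail $[T,+\infty)$ where the common limit $l$ forces both $F_n$ and $F$ to be close to $l$ uniformly.

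First I would use the limit condition to localize the problem. By continuity of $F$ and the fact that $F(t)\to l$, I would fix $T>0$ so large that $l-F(T)<\varepsilon/3$. Since $F$ is non-decreasing with limit $l$, this gives $F(t)\in[l-\varepsilon/3,l]$ for all $t\geq T$; the same monotonicity shows that every $F_n$ is bounded above by $l$.

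On the compact interval $[0,T]$, I would run the standard Pólya argument. Using uniform continuity of $F$ on $[0,T]$, I would choose a partition $0=t_0<t_1<\dots<t_K=T$ with $F(t_{i+1})-F(t_i)<\varepsilon/3$ for each $i$. By pointwise convergence at finitely many points (including $t_0,\dots,t_K$ and also a point past $T$ to handle the tail), there exists $N$ such that $|F_n(t_i)-F(t_i)|<\varepsilon/3$ for all $n\geq N$ and all $i$. For arbitrary $t\in[t_i,t_{i+1}]$, monotonicity of $F_n$ and $F$ gives
\begin{align*}
F_n(t)-F(t)&\leq F_n(t_{i+1})-F(t_i)\leq (F(t_{i+1})-F(t_i))+\varepsilon/3<2\varepsilon/3,\\
F(t)-F_n(t)&\leq F(t_{i+1})-F_n(t_i)\leq (F(t_{i+1})-F(t_i))+\varepsilon/3<2\varepsilon/3,
\end{align*}
so $|F_n(t)-F(t)|<2\varepsilon/3$ uniformly on $[0,T]$ for $n\geq N$.

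For the tail $t\geq T$, I would use that $F_n(T)\to F(T)$, so (enlarging $N$ if needed) $F_n(T)>F(T)-\varepsilon/3>l-2\varepsilon/3$, and then by monotonicity $F_n(t)\in[l-2\varepsilon/3,l]$ for all $t\geq T$. Combined with $F(t)\in[l-\varepsilon/3,l]$, this yields $|F_n(t)-F(t)|\leq 2\varepsilon/3$ on $[T,+\infty)$. Taking the maximum of the two estimates gives $\sup_{t\geq 0}|F_n(t)-F(t)|\leq 2\varepsilon/3<\varepsilon$ for $n\geq N$, which is the claim. The only delicate point is making sure the partition constants and the tail constant $T$ are chosen in the right order, but since both only depend on $F$ and $\varepsilon$, there is no real obstacle.
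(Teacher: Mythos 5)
Your proof is correct and takes essentially the same route as the paper's: fix $T$ via the common limit $l$, run the standard partition-plus-monotonicity (P\'olya-type) argument on the compact interval $[0,T]$, and control the tail $[T,+\infty)$ using $F_n\le l$, $F_n(T)\to F(T)$ and monotonicity. The differences are only cosmetic (the paper normalizes $l=1$ and uses $\varepsilon/2$ in place of your $\varepsilon/3$).
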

\begin{proof}
	Without loss of generality we can suppose that $l=1$. Fix $\epsilon>0$. Then there exists $T>0$ such that for any $t \ge T$ it is true that $1-F(t)<\epsilon/2$. Note that $F(t)$ is continuous in $[0,T]$ and thus uniformly continuous. It follows that there exists $\delta>0$ such that for any choice of $t_1$ and $t_2$ in $[0,T]$ it holds
	\begin{equation}
		|t_2-t_1|< \delta \, \Longrightarrow \,|F(t_2)-F(t_1)|<\epsilon/2.
		\label{3118b}
	\end{equation}
	Take now a finite partition $\Pi$ of $[0,T]$ with diam$(\Pi)<\delta$. Since $F_n(t) \to F(t)$ for any $t$ we have by pointwise convergence that there exists $\nu>0$ such that, for any $n>\nu$ one has 
	\begin{equation}
		|F_n(t_i)-F(t_i)|< \epsilon/2 \text{ for all } t_i \in \Pi.
		\label{3118}
	\end{equation}
	Now consider $t>T$. We have that
	\begin{align}
		F_n(t) \leq 1 \leq F(t)+\epsilon/2 < F(t)+\epsilon
	\end{align}
	where we used the fact that since $t>T$ then $1<F(t)+\epsilon/2$. On the other hand, for $n > \nu$
	\begin{align}
		F_n(t) \geq \, &F_n(T) \notag \\
		\geq \, & F(T)-\epsilon /2 & \text{by \eqref{3118} since } T \in \Pi \notag \\
		\geq \, &  F(t)-(F(t)- F(T))-\epsilon /2 \notag \\ \geq \, &   F(t)-\epsilon & \text{since } F(t)-F(T)\leq 1-F(T)<\epsilon/2.
	\end{align}
	For $t \in [0,T]$ instead, we have that there exists $t_i, t_{i+1} \in \Pi$ such that $t \in [t_i, t_{i+1}]$. It follows that, for $n > \nu$,
	\begin{align}
		F_n(t) \leq &F_n(t_{i+1}) \notag \\
		\leq \, & F(t_{i+1}) + \epsilon/2  &\text{by \eqref{3118} since } t_{i+1} \in \Pi \notag \\
		\leq \, & F(t)+ (F(t_{i+1})-F(t))+\epsilon/2 \notag \\
		\leq \, &F(t) +\epsilon & \text{by \eqref{3118b} since diam}(\Pi)<\delta.
	\end{align}
	With the same argument it is possible to see that, for any $n > \nu$
	\begin{equation}
		F_n(t) \geq F(t) -\epsilon.
	\end{equation}
\end{proof}
\subsection{Proof of Proposition \ref{prop:kBM}}\label{app:kBM}
By \cite[Chapter 2, Problem 8.6]{karatzas}, we have that
\begin{align}\label{eq:premsd}
	\begin{split}
	\E_0[|B^\dagger(t)|^2\mathbf{1}_{T_c>t}]&=\int_{-\infty}^{c}x^2p(t,x;0)\, dx-\int_{-\infty}^{c}x^2p(t,x;2c)\, dx\\
	&=\int_{-\infty}^{c}x^2p(t,x;0)\, dx-\int_{-\infty}^{-c}x^2p(t,x;0)\, dx\\
	&\qquad -4c\int_{-\infty}^{-c}xp(t,x;0)\, dx-4c^2\int_{-\infty}^{-c}p(t,x;0)\, dx\\
	&=2\int_{0}^{c}x^2p(t,x;0)\, dx-4c\int_{-\infty}^{-c}xp(t,x;0)\, dx-4c^2\int_{-\infty}^{-c}p(t,x;0)\, dx\\
	&=2\int_{0}^{c}x^2p(t,x;0)\, dx+4c\int_{c}^{+\infty}xp(t,x;0)\, dx-4c^2\int_{c}^{+\infty}p(t,x;0)\, dx\\
	&=2\int_{0}^{c}x^2p(t,x;0)\, dx+4c\left(\int_{0}^{+\infty}xp(t,x;0)\, dx-\int_{0}^{c}xp(t,x;0)\, dx\right)\\
	&\qquad -4c^2\left(\frac{1}{2}-\int_{0}^{c}p(t,x;0)\, dx\right)	
	\end{split}
\end{align}
Now, by \cite[Formula 3.321.5]{gradshteyn2014table},
\begin{equation}\label{eq:premsd1}
	\int_{0}^{c}x^2p(t,x;0)\, dx=\btrev{\frac{1}{2}}t\, {\sf erf}\left(\frac{c}{\sqrt{2t}}\right)-c\sqrt{\frac{t}{2\pi}}e^{-\frac{c^2}{2t}}.
\end{equation}
Next, notice that	
\begin{equation}\label{eq:premsd2}
	\int_{0}^{+\infty}xp(t,x;0)\, dx=\sqrt{\frac{t}{2\pi}} \qquad \int_{0}^{c}xp(t,x;0)\, dx=\sqrt{\frac{t}{2\pi}}\left(1-e^{-\frac{c^2}{2t}}\right)
\end{equation}
and furthermore
\begin{equation}\label{eq:premsd3}
	\int_{0}^{c}p(t,x;0)\, dx=\frac{1}{2}{\sf erf}\left(\frac{c}{\sqrt{2t}}\right).
\end{equation}
Plugging \eqref{eq:premsd1}, \eqref{eq:premsd2} and \eqref{eq:premsd3} into \eqref{eq:premsd} we get \eqref{eq:msdbM}.
\qed

\section*{Acknowledgements}
G. Ascione and B. Toaldo have been partially supported by the MIUR PRIN 2017 project ``Stochastic Models for Complex Systems'', no. 2017JFFHSH.

G. Ascione is supported by the PRIN 2022XZSAFN Project “Anomalous Phenomena on Regular and Irregular Domains: Approximating Complexity for the Applied Sciences” CUP E53D23005970006 and by the GNAMPA-INdAM group.

B. Toaldo acknowledges financial support under the National Recovery and Resilience Plan (NRRP), Mission 4, Component 2, Investment 1.1, Call for tender No. 104 published on 2.2.2022 by the Italian Ministry of University and Research (MUR), funded by the European Union – NextGenerationEU– Project Title “Non–Markovian Dynamics and Non-local Equations” – 202277N5H9 - CUP: D53D23005670006 - Grant Assignment Decree No. 973 adopted on June 30, 2023, by the Italian Ministry of Ministry of University and Research (MUR).

B. Toaldo would like to thank the Isaac Newton Institute for Mathematical Sciences, Cambridge, for support and hospitality during the programme Stochastic systems for anomalous diffusion, where work on this paper was undertaken. This work was supported by EPSRC grant EP/Z000580/1.

G. Ascione and B. Toaldo have been partially supported by GNAMPA (INDAM).

\textcolor{blue}{G. Ascione and B. Toaldo would like to thank the anonymous referees, whose comment really helped improving the paper.}

\vspace{1cm}

\end{document}